\documentclass[preprint,12pt]{amsart} 
\setlength{\textheight}{23cm}
\setlength{\textwidth}{16cm}
\setlength{\oddsidemargin}{0cm}
\setlength{\evensidemargin}{0cm}
\setlength{\topmargin}{0cm}
\usepackage{amssymb}
\usepackage{graphicx}
%
%
%
\theoremstyle{plain} 
\newtheorem{theo}{\indent\sc Theorem}[section]
\newtheorem{lemm}[theo]{\indent\sc Lemma}
\newtheorem{cor}[theo]{\indent\sc Corollary}
\newtheorem{prop}[theo]{\indent\sc Proposition}
\theoremstyle{definition} 
\newtheorem{defi}[theo]{\indent\sc Definition}
\newtheorem{rem}[theo]{\indent\sc Remark}

%

%

\newcommand{\vna}{von Neumann algebra}

\newcommand{\sps}{standard probability space}

\newcommand{\Hs}{Hilbert space}
\newcommand{\sfplalg}{subfactor planar algebra}
\newcommand{\unplal}{unshaded planar algebra}

\newcommand{\alev}{almost everywhere}

\newcommand{\wrt}{with respect to}
\newcommand{\vesp}{vector space}

\newcommand{\R}{\mathbb R}
\newcommand{\C}{\mathbb C}
\newcommand{\N}{\mathbb N}
\newcommand{\h}{\mathcal H}
\newcommand{\lN}{{\ell^2(\mathbb N)}}
\newcommand{\lNN}{{\lN\otimes\lN}}
\newcommand{\deltafrac}[1]{\delta^{\frac{#1}{2}}}

\newcommand{\deltamoin}[1]{\delta^{-\frac{#1}{2}}}
\newcommand{\Vertvert}[1]{\Vert #1\Vert}
\newcommand{\de}{\delta}
\newcommand{\dmm}{\delta^{-2}}
\newcommand{\dmi}{\delta^{-1}}
\newcommand{\dr}{\delta^{-\frac{r}{2}}}
\newcommand{\dkr}{\delta^{-\frac{k+r}{2}}}
\newcommand{\sh}{ s+s^*}

\newcommand{\U}{\frac{\cup-1}{\deltafrac{1}}}
\newcommand{\cupbul}[1]{\cup^{\bullet #1}}
\newcommand{\2}{{[-2;2]}}
\newcommand{\Ln}{{L^2(\2,\nu)}}
\newcommand{\co}{\mathcal C(\2)}
\newcommand{\MP}{M_\Pl}

\newcommand{\LMP}{L^2(M_\Pl)}
\newcommand{\LM}{L^2(M)}
\newcommand{\AM}{A\subset M}
\newcommand{\LA}{L^2(A)}
\newcommand{\Pl}{\mathcal P}
\newcommand{\wPl}{{Gr(\Pl)}}
\newcommand{\AbA}{{_A{\overline b}_A}}
\newcommand{\ALAA}{{_AL^2(A)_A}}
\newcommand{\APA}{{_A{\overline{\Pl_1}}_A}}
\newcommand{\AVA}{{_A\overline{V}_A}}
\newcommand{\NMA}{N_M(A)}

\newcommand{\intI}{\int^\oplus_Y}
\newcommand{\Li}{{L^\infty(Y,\nu)}}
\newcommand{\Ld}{L^2(Y,\nu)}
\newcommand{\Bh}{\mathcal B(\h)}
\newcommand{\Bo}{\mathcal B}
\newcommand{\YDn}{(Y,\mathcal D,\nu)}
\newcommand{\Grk}{Gr_k(\Pl)}
\newcommand{\LMk}{L^2(M_k)}

\begin{document}

\title[Unshaded planar algebras and their associated II$_1$ factors]{Unshaded planar algebras and their associated II$_1$ factors}

\author[A. Brothier]{Arnaud Brothier$^*$}

\subjclass[2000]{ 
Primary 46L10; Secondary 46K15.
}

\thanks{ 
$^{*}$Partially supported by the Region Ile de France and by ERC Starting Grant VNALG-200749.
}
\address{
Institute of Mathematics of Jussieu 
Universtity Paris Diderot 
175 rue du Chevaleret 
Paris 75013 
France
Phone number : +33(0)601768274}
\email{brot@math.jussieu.fr}

\address{
KU Leuven
Department of Mathematics 
Celestijnenlaan 200B - Bus 2400
3001 Heverlee
Belgium}
\email{arnaud.brothier@wis.kuleuven.be}


\begin{abstract}
Guionnet et al. gave a construction of a II$_1$ factor associated to a \sfplalg.
In this paper we define an \unplal.
To any \unplal\ $\Pl$ we associate a finite \vna\ $\MP$.
We prove that $\MP$ is a II$_1$ factor that contains a generic maximal abelian subalgebra called the cup subalgebra.
\end{abstract}

\maketitle
\keywords{ 
Planar algebras; von Neumann algebras; maximal abelian subalgebras; free probability.
\section*{Introduction}

The theory of subfactors has been initiated by Jones \cite{Jones_index_for_subfactors}; he introduced the notion of planar algebras \cite{jones_planar_algebra} in order to describe the standard invariant, (see also Peters \cite{peters_planar_haagerup_graph} for an introduction to planar algebras).
Popa \cite{popa_system_construction_subfactor} proved that any standard invariant comes from a subfactor, Popa and Shlyakhtenko proved \cite{Popa_Shlyakhtenko_univ_prop_LF_subfactor} that the subfactor can be realized in the free group factor $L(\mathbb{F}_\infty)$.
Guionnet et al. \cite{GJS_random_matrices_free_proba_planar_algebra_and_subfactor, JSW_orthogonal_approach_planar_algebra,GJS_semifinite_algebra} gave a planar proof of this result.
To any finite depth \sfplalg\ $\Pl$, they associate a subfactor $L(\mathbb F_s)\subset L(\mathbb F_t)$, where $L(\mathbb F_s)$ and $L(\mathbb F_t)$ are interpolated free group factors (see \cite{Dykema_LF_t,Radulescu_Random_matrices_LF_t}).
Furthermore, they give an explicit formula for the parameters $s$ and $t$.
The formula is a linear combination of the Jones index and the global index of the subfactor planar algebra $\Pl$.
Note that those results have been partially proved independently by Sunder and Kodiyalam in \cite{sunder_constructionGJS,Sunder_surlaconstructionGJSII}.

In \cite{JSW_orthogonal_approach_planar_algebra}, Jones et al. associate a tower of II$_1$ factors $\{M_k,\,k\geqslant 0\}$ to any subfactor planar algebra $\Pl$.
We consider the first \vna\ $M_0$ that appears in this tower.
We call it the \textit{\vna\ associated to $\Pl$} and denote it by $\MP$.
The aim of this paper is to give an extension of this construction for a class of planar algebras that we called \textit{unshaded}, see definition \ref{defi_unshaded_planar_algebra}.
We consider a generic abelian subalgebra of the \vna\ associated to the planar algebra. 
We call it \textit{the cup subalgebra}.
The main result of this paper is the following:

\begin{theo}
To any \unplal\ $\Pl$ is associated a \vna\ $\MP$.
This \vna\ $\MP$ is a II$_1$ factor and the cup subalgebra is a maximal abelian subalgebra.
\end{theo}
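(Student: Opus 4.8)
The plan is to adapt the Guionnet--Jones--Shlyakhtenko construction to the unshaded setting and then to extract the maximal abelian subalgebra property from an explicit decomposition of $\LMP$ into $A$--$A$ bimodules. First I would assemble the algebra from the graded vector space $\wPl=\bigoplus_{k\geq 0}\Pl_k$, equipped with the multiplication that juxtaposes two elements and joins the prescribed boundary strings, and with the involution induced by vertical reflection of tangles. The trace $\tau$ is the functional that closes a diagram into its loop system; its positivity is precisely the positive--definiteness axiom built into Definition \ref{defi_unshaded_planar_algebra}. Hence $\tau$ is a faithful tracial state on $\wPl$, and completing in the associated GNS representation produces a finite von Neumann algebra $\MP$ whose trace vector is cyclic and separating, with $\LMP$ the $\ell^2$--completion of $\wPl$ filtered by the grading $\bigoplus_k\overline{\Pl_k}$.

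To settle the type, I would observe that $\dim\Pl_k\to\infty$, so $\MP$ is infinite dimensional; carrying a faithful normal trace it has no minimal projections and is therefore of type II$_1$ as soon as factoriality is known. For factoriality the cleanest route is the free--probabilistic model: realising the generators of $\wPl$ by creation and annihilation operators on a Fock space places $\MP$ inside an amplification of an interpolated free group factor, whence the center is trivial. Alternatively one checks directly that a central element must commute with every shift operator $s$ as well as with $\sh$, and is thus forced to be a scalar.

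Next I would identify the cup subalgebra. Let $A$ be the von Neumann subalgebra generated by the normalized cup element, modelled as the self--adjoint operator $\sh$. Counting the planar closures that compute $\tau\big((\sh)^n\big)$ yields the Catalan numbers in even degree and $0$ in odd degree, so the distribution $\nu$ of $\sh$ is the semicircular law supported on $\2$. Functional calculus then gives $A\cong L^\infty(\2,\nu)$ together with an isometry $\LA\cong\Ln$, with $\co$ furnishing the dense algebra of continuous functions used to manipulate the $A$--action concretely.

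The genuinely hard part is maximality, namely $A'\cap\MP=A$, which I would reformulate as the assertion that the only $A$--central vectors in $\LMP$ lie in $\LA$. Since left and right multiplication by $\sh$ are commuting self--adjoint operators, they are jointly diagonalizable and organise $\LMP$ as a direct integral over their joint spectrum inside $\2\times\2$; an $A$--central vector is then exactly a field supported on the diagonal $\{\lambda=\mu\}$. Using the grading I would split $\LMP$ into the tractable $A$--$A$ bimodules $\ALAA$, $\APA$, $\AVA$ and their higher analogues, and show that the trivial piece $\ALAA$ accounts for all of the diagonal spectrum while each higher bimodule carries none. The main obstacle is exactly this last point: the joint spectral analysis of the two shift actions on the higher graded summands, where the combinatorial structure of the unshaded planar algebra must be used to rule out common diagonal eigenfields off $\overline{\Pl_0}$. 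Once the higher bimodules are seen to have no $A$--central vectors, every diagonal--supported field lies in $\Ln=\LA$, which gives $A'\cap\MP=A$ and finishes the proof.
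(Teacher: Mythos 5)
Your overall architecture matches the paper's: build $\MP$ from $(\wPl,\star,*)$ with the diagrammatic trace, identify $A$ with $L^\infty(\2,\nu)$ via the semicircular distribution of the normalized cup, decompose $\LMP$ as $\ALAA\oplus\APA\oplus\AVA$, and reduce maximality to showing the non-trivial summands carry no $A$-central vectors. But there is a genuine gap at exactly the point you flag as ``the main obstacle'' and then do not resolve: proving that $\APA$, the bimodule generated by the odd piece $\Pl_1$, is a direct sum of coarse correspondences. This summand has no analogue in the shaded (subfactor) setting of Jones--Shlyakhtenko--Walker, and it is not true that the left and right cup actions on it are jointly diagonalized by the obvious basis: in the orthonormal basis $\{\deltamoin{r}b\bullet\cupbul r\}\cup\{\deltamoin{k+r}\cupbul k\bullet Z_b\bullet\cupbul r\}$ the left action of $\U$ is $\alpha+(\sh)\otimes 1$, a \emph{finite-rank perturbation} of the shift picture, so ``the joint spectrum avoids the diagonal'' is precisely what must be proved, not a consequence of joint diagonalizability. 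The paper's resolution is operator perturbation theory applied fiberwise over the direct integral that diagonalizes the right action: each fiber operator $c_t$ is a finite-rank perturbation of $\sh$, Weyl--von Neumann preserves the essential spectrum, an explicit recursion rules out discrete eigenvalues outside $\2$, cyclicity of $e_0$ gives uniform multiplicity one, and Kato--Rosenblum forces the spectrum to be purely absolutely continuous and $c_t$ unitarily equivalent to $\sh$; a measurable selection of these unitaries then produces the intertwiner commuting with $1\otimes(\sh)$. Without some argument of this kind (or a substitute), maximality is not established.

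A secondary issue is factoriality. Your proposed route via a Fock-space realization inside an interpolated free group factor is only available for special examples (such as the polynomial planar algebra) and is not established for an arbitrary unshaded planar algebra; the alternative ``check directly that a central element commutes with the shifts'' again presupposes the coarse structure of $\LMP\ominus\LA$. The paper instead deduces factoriality \emph{from} the MASA property: once $\LMP\ominus\LA$ is coarse, an $A$-central vector there corresponds to a Hilbert--Schmidt operator commuting with $\sh$, which would force an eigenvalue of the semicircular operator, a contradiction; a central element of $\MP$ then lies in $A$ and a Cauchy--Schwarz equality argument on the coarse part forces it to be scalar. So the logical order is MASA first, factor second, and both hinge on the perturbation-theoretic step you left open.
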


The introduction of the notion of \unplal\ has been motivated by the following example:
Consider the space of non commutative complex polynomials in $l$ variables with monomials of even degree $\C_{\text{even}}\langle X_1,\cdots,X_l\rangle$.
It has a planar algebra structure by following the tensor rules, see Jones \cite[Example 2.6]{jones_planar_algebra}.
We want to extend this planar algebra structure to all polynomials $\C\langle X_1,\cdots,X_l\rangle$ (not necessarily of even degree).
In order to do that we need a larger set of planar tangles.
We define the collection of \textit{unshaded planar tangles} that can have an odd or even number of strings coming from a disk.
If there is an odd number of boundary points on a disk, there is no hope in trying to shade it.
For this reason, we call them unshaded planar tangles.
We give here a definition similar to the one given by Peters \cite{peters_planar_haagerup_graph} for the shaded case:

\begin{defi}
An unshaded planar tangle has an outer disk, a finite number of inner disks, and a finite number of non-intersecting strings.
A string can be either a closed loop or an edge with endpoints on boundary circles.
We require that there be a marked point (denoted by $\star$) on the boundary of each disk, and that the inner disks are ordered.
\end{defi}

Here is an example of an unshaded planar tangle:
$$\includegraphics[width=4cm]{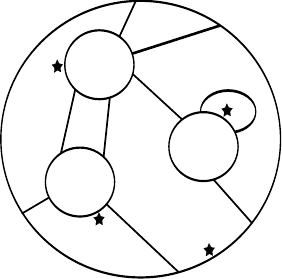}.$$
We will suppose that two tangles are equal if they are isotopic.
We can compose them by placing a tangle inside an interior disk of another lining up the marked points, and connecting endpoints of strands.
By considering the action of the planar tangles on the subfactor planar algebra $\C_{\text{even}}\langle X_1,\cdots,X_l\rangle$, one can define in an obvious way the action of the class of unshaded planar tangles on the algebra of all polynomials $\C\langle X_1,\cdots,X_l\rangle$.

We define a multiplication $\star$ on $\C\langle X_1,\cdots,X_l\rangle$ which is:

\begin{align*}
X_{i_1}\cdots X_{i_n}\star X_{j_1}\cdots X_{j_m} &= X_{i_1}\cdots X_{i_n}X_{j_1}\cdots X_{j_m} \\
&+ \delta_{i_n,j_1} X_{i_1}\cdots X_{i_{n-1}}X_{j_2}\cdots X_{j_m}\\
 &+ \delta_{i_n,j_1} \delta_{i_{n-1},j_2} X_{i_1}\cdots X_{i_{n-2}}X_{j_3}\cdots X_{j_m} + \cdots,
\end{align*}
where $\delta_{ij}$ is the Kronecker symbol.
For example,
$$X_1X_2X_3\star X_3X_2=X_1X_2X_3^2X_2+X_1X_2^2+X_1.$$

Be aware that $\star$ is used in two different contexts, for a multiplication and to indicate the marked points of a tangle.

Following the construction, of Guionnet et al. we provide a \vna\ which is isomorphic to the free group factor $L(\mathbb{F}_l)$ with $l$ generators, and $\{X_1,\cdots,X_l\}$ is a free semicircular family in the sense of Voiculescu \cite{Voiculescu_dykema_nica_Free_random_variables}.
Consider a finite dimensional real \Hs\ $\h$ with an orthonormal basis $\{h_1,\cdots,h_l\}$.
In \cite[Theorem 2.6.2.]{Voiculescu_dykema_nica_Free_random_variables}, Voiculescu defined a map $s$ from $\h$ to the bounded operators of the full Fock space on the complexification of $\h$.
The \vna\ generated by the $s(h)$ is isomorphic to the free group factor $L(\mathbb F_l)$. Furthermore the $s(h_i)$ form a free semicircular family.
The map $X_i\longmapsto s(h_i)$ defines an isomorphism of \vna s.
Hence, the construction of Guionnet et al. can be seen as an extension of the construction of Voiculescu.

This work has also been motivated by a question about maximal abelian subalgebras (MASAs).
Consider the free group factor generated by $a_1,\cdots,a_l$. 
Consider the \textit{radial MASA} generated by the element $\sum_i(a_i+a_i^{-1})$ and the \textit{generator MASA} generated by $a_1$.
It is conjectured that the generator and the radial MASAs are not isomorphic.
We call two MASAs $\AM$ and $B\subset N$ \textit{isomorphic} if there exists an isomorphism of \vna s $\phi:M\longrightarrow N$ such that $\phi(A)=B$.
The cup subalgebra of $\MP$ where $\Pl$ is the algebra of polynomials $\C\langle X_1,\cdots,X_l\rangle$ is of the same nature as the radial MASA.
We prove that it shares many property with the radial MASA.
However we have been unable to prove that the cup subalgebra, the radial MASA and the generator MASA are pairwise isomorphic or distinct.

Here is a precise description of the paper.\\
In section \ref{section generalization de GJSW}, we give a definition of an unshaded planar algebra $\Pl$.
It is a countable family of finite dimensional complex vector spaces $\{\Pl_n,\,n\geqslant 0\}$ on which the set of unshaded planar tangles is acting and following a few extra axioms.
The algebra of non commutative polynomials is an example of an unshaded planar algebra.
Furthermore, the planar algebra $\tilde\Pl=\{\Pl_{2n},\ n>0\}$ is a \sfplalg.
Conversely, any self-dual \sfplalg\ is of this form.
We recall that a \sfplalg\ is \textit{self-dual} if its principal graph is equal to its dual graph.

We associate to an \unplal\ $\Pl$ a \vna\ $\MP$.

The principal difficulty is to show that the multiplication is bounded, see proposition \ref{prop_GJSW_mult_cont}.
This is done by a graphical proof.
We consider the trace of the unshaded planar algebra that can be extended as a faithful normal state $tr$ on $M_\Pl$.
In particular $M_\Pl$ is a finite \vna.
We denote the product of this \vna\, by $\star$ in reference to the construction.

If $\Pl$ is a subfactor planar algebra, then it is proved in \cite{JSW_orthogonal_approach_planar_algebra} that $M_\Pl$ is a II$_1$ factor.
The rest of this article is devoted to proving that $M_\Pl$ is a II$_1$ factor for any unshaded planar algebra $\Pl$.

To do this, we proceed as in \cite{JSW_orthogonal_approach_planar_algebra}, we look at the cup subalgebra $A\subset M_\Pl$ introduced in section  \ref{section_GJSW_cup}.
The cup subalgebra $\AM_\Pl$ is an abelian von Neumann subalgebra generated by a self-adjoint element of $\Pl_2$.
We show that the $A$-bimodule $\LMP\ominus\LA$ is isomorphic to an infinite direct sum of coarse correspondences. 
The main difficulty is to show that the $A$-bimodule generated by $\Pl_1$ is isomorphic to a direct sum of coarse correspondences.
The proof uses perturbation theory of operators.
Once we know the $A$-bimodule structure of $\LMP$ we get in particular that the cup subalgebra is a MASA and $\MP$ is a II$_1$ factor.

In the appendix, we associate to an \unplal\ $\Pl=\{\Pl_n,\,n\geqslant 0\}$ a tower of \vna s $\{M_k,\ k>0\}$.
We show that each $M_k$ is a II$_1$ factor.
Furthermore, we prove that the \sfplalg\ associated to the subfactor $M_0\subset M_1$ is the self-dual planar algebra $\tilde\Pl=\{\Pl_{2n},\ n>0\}$.

In the last part of the appendix we discuss MASAs and invariants for them.


\section{A \vna\ associated to a \unplal}\label{section generalization de GJSW}


\subsection{Definition of an unshaded planar algebra}\label{subsection_GJSW_definition_unshaded_planar_algebra}


\begin{defi}\label{defi_unshaded_planar_algebra}
An unshaded planar algebra $\Pl$ is a family of finite dimensional complex vector spaces $\{\Pl_n\}_{n\geqslant 0}$, called the $n$-box spaces.
We suppose that the dimension of $\Pl_0$ is equal to $1$.
For any $n\geqslant 0$, there is an anti-linear involution $*:\Pl_n\longrightarrow\Pl_n$.
Any unshaded planar tangle defines a linear map $\Pl_{n_1}\otimes ...\otimes \Pl_{n_k}\rightarrow \Pl_{n_0}.$
The natural number $n_i$ corresponds to the number of endpoints on the $i$th interior disk and $n_0$ to number of endpoints on the exterior disk.
The action of the collection of unshaded planar tangle is compatible with the composition of tangles.

We require that $\Pl$ is spherically invariant and the action of the tangles is compatible with the anti-linear involutions $*$ of the $\Pl_n$,
i.e.
$$T(b_{n_1},\ldots,b_{n_i})^*=T^*({b_{n_1}}^*,...,{b_{n_i}}^*)$$
for any vectors $b_{n_i}\in \Pl_{n_i}$ and any tangle $T$, where $T^*$ is the reflection of $T$ for any line in the plane.
We denote the modulus of the planar algebra $\Pl$ by $\delta$, which is the value of a closed loop
$$\includegraphics[width=3cm]{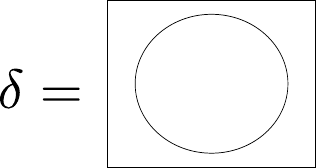}$$
and suppose that $\delta > 1$.
We assume that $\Pl$ is non degenerate, i.e. for any $n\geqslant 0$, the sesquilinear form $\langle\cdot,\cdot\rangle$ defined on each $\Pl_n$ by
$$\includegraphics[width=4cm]{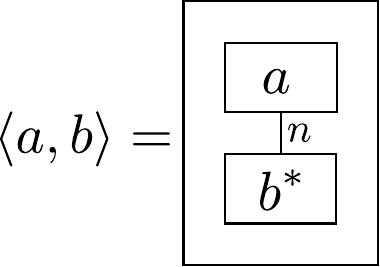}$$
is an inner product of $\Pl_n$.
\end{defi}

In all the paper, a planar algebra will denote an unshaded planar algebra.


\subsection{Setup}\label{subsection_GJSW_notations}


We follow the setup of \cite{JSW_orthogonal_approach_planar_algebra}.
Let $\Pl=(\Pl_n)_{n\geqslant 0}$ be an unshaded planar algebra.
Let $Gr(\Pl)$ be the graded vector space equal to the algebraic direct sum of the vector spaces $\Pl_n$,
i.e. $Gr(\Pl)=\bigoplus_{n\geqslant 0}\Pl_n$.
We extend the inner product of each $\Pl_n$ on $Gr(\Pl)$ making it an orthogonal direct sum.
We still write $\Pl_n$ when it is considered as the $n$-graded part of $Gr(\Pl)$.
To simplify the pictures, as in the article of Kodiyalam and Sunder \cite{sunder_constructionGJS} we decorate strands in a planar tangle with non-negative integers to represent cabling of that strand.
For example :
$$\includegraphics[width=1.5cm]{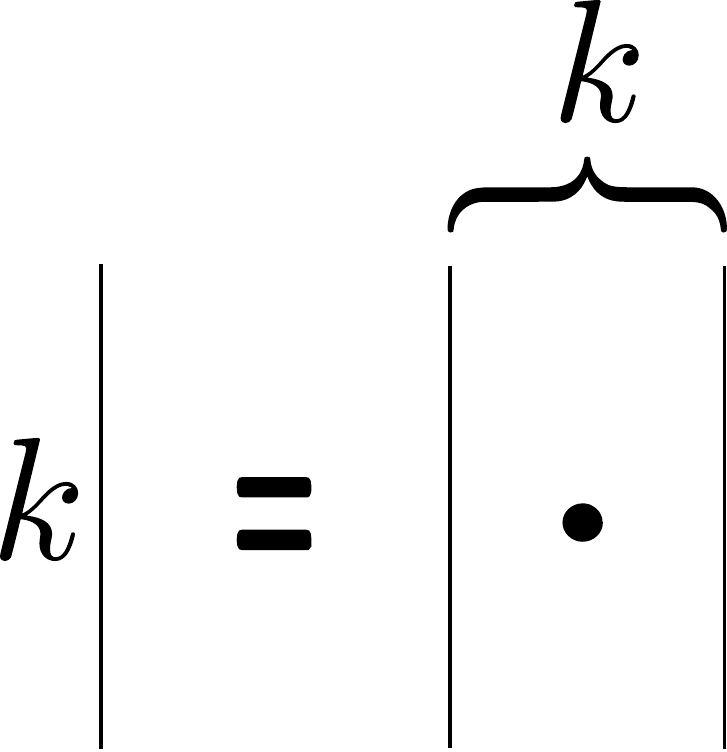}.$$
An element $a\in\Pl_n$ will be represented as a box:
$$\includegraphics[width=1.5 cm]{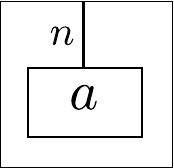}.$$
We assume that the marked point is at the top left of the box.
If not we will denote this marked point by $\star$.
We define a multiplication on $\wPl$ by requiring that if $a\in\Pl_n$ and $b\in\Pl_m$, then $a\bullet b\in\Pl_{n+m}$ is given by
$$\includegraphics[width=4cm]{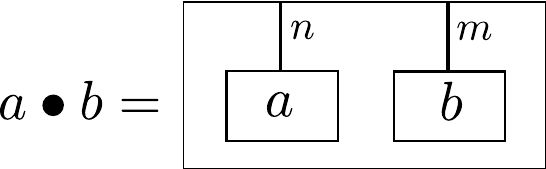}.$$
Consider the element of $\Pl_2$:
$$\includegraphics[width=1.7cm]{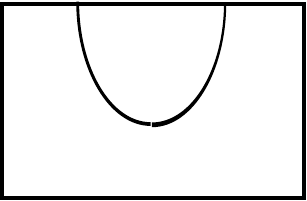}.$$
We call it cup and denote it by the symbol $\cup$.
The element
$$\includegraphics[width=2cm]{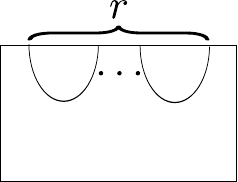}$$
is denoted by the symbol $\cupbul r.$
For example, $\cupbul 2=\cup\bullet\cup$.
We use the convention that $0=\cupbul{k}$ for $k<0$ and $1=\cupbul{0}$.
In particular, $a\bullet\cupbul r=0$ if $r\leqslant -1$, for any $a$.


\subsection{Construction of a von Neumann algebra}


We follow the construction given in \cite{JSW_orthogonal_approach_planar_algebra}.

We equip the graded vector space $\wPl$ with the product $\star$ described by the following planar tangle:
$$\includegraphics[width=5.5cm]{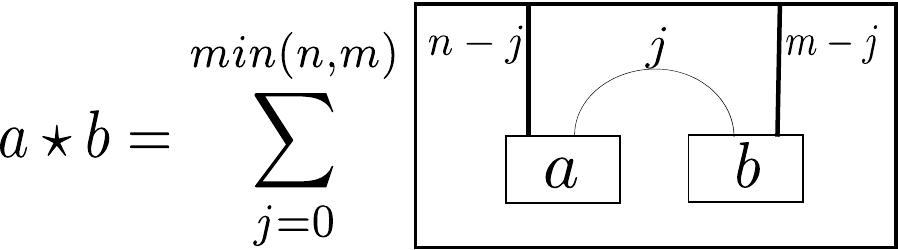},$$
where $a\in \Pl_n$ and $b\in\Pl_m$.
The $*$-structure on $\wPl$ is the involution coming from the planar algebra.

\begin{prop}
The algebraic structure
$(\wPl,\star,*)$ is a unital involutive complex algebra.
The unity of $\wPl$ is the empty diagram:
$$\includegraphics[width=1.5cm]{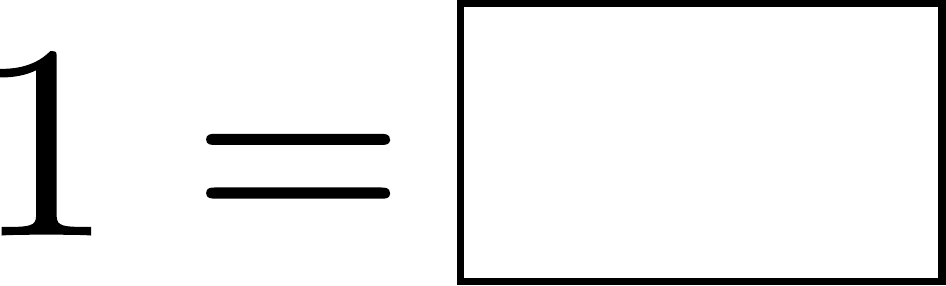}.$$
\end{prop}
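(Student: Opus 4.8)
The plan is to reduce every assertion to properties of the tangle action that are already axiomatized in Definition \ref{defi_unshaded_planar_algebra}: multilinearity, compatibility with composition of tangles, isotopy invariance, and compatibility with the involution $*$. First I would check that $\star$ is a well-defined bilinear map on $\wPl$. Using the algebraic direct sum decomposition $\wPl=\bigoplus_{n\geqslant 0}\Pl_n$, it suffices to define $a\star b$ for homogeneous $a\in\Pl_n$ and $b\in\Pl_m$ and then extend bilinearly. Reading off the product tangle, $a\star b$ is a finite sum of terms lying in $\Pl_{n+m},\Pl_{n+m-2},\ldots$, hence lands in $\wPl$; since each planar tangle acts as a multilinear map, $\star$ is linear in each variable separately and extends to a genuine bilinear product on the algebraic direct sum.

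The crux is associativity. I would express $(a\star b)\star c$ and $a\star(b\star c)$ as the actions of two composite tangles on $a\otimes b\otimes c$, obtained by nesting the product tangle inside itself in the two possible ways. By the axiom that the action is compatible with composition of tangles, each iterated product equals the action of the corresponding composite tangle; isotopy invariance then shows that the two composite tangles are isotopic, both amounting to placing the three boxes side by side and connecting adjacent strands in all compatible ways, so the two triple products agree. The delicate point is bookkeeping: because $\star$ is inhomogeneous, each iterated product is a double sum over the number of cap connections, and I must verify that the two double sums reindex to the same single sum. Graphically this is automatic once the composite tangles are identified, but I expect matching the summation ranges to be the main technical obstacle.

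Next I would verify that the empty diagram $1\in\Pl_0$, the generator of the one-dimensional space $\Pl_0$ and equal to $\cupbul{0}$, is a two-sided unit. Inserting the empty diagram as one factor of the product tangle leaves an empty inner disk carrying no strands; after isotopy this disk can be erased and the tangle collapses to the identity tangle on the remaining factor, so $1\star a=a=a\star 1$.

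Finally, for the $*$-structure, anti-linearity and the involution property $(a^*)^*=a$ are already part of Definition \ref{defi_unshaded_planar_algebra}. For anti-multiplicativity I would apply the compatibility axiom $T(\cdots)^*=T^*(\cdots^*)$ to the product tangle $T$: its reflection $T^*$ is isotopic to the product tangle with its two inputs interchanged, which yields $(a\star b)^*=b^*\star a^*$. Together these show that $*$ is an involutive anti-automorphism, completing the proof that $(\wPl,\star,*)$ is a unital involutive complex algebra.
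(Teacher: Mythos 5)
Your proposal is correct and follows the same route as the paper, which simply defers to the subfactor case (\cite{JSW_orthogonal_approach_planar_algebra}, Proposition 3.2): the argument there is exactly the tangle-composition and isotopy bookkeeping you describe, with associativity reducing to matching the double sums of contraction tangles in $(a\star b)\star c$ and $a\star(b\star c)$ against the single family of planar pairings of the three boxes. You have correctly identified the only delicate point, namely that terms where $b$ is fully contracted allow strings of $a$ to reach $c$, and that these occur exactly once in each iterated product.
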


\begin{proof}
The proof is the same as in the subfactor planar algebras case, see \cite{JSW_orthogonal_approach_planar_algebra}[proposition 3.2.].
\end{proof}

We define a trace on $Gr(\Pl)$ by the formula $tr(a)=\langle a,1\rangle$ so that the trace of an element is its zero-graded piece.
The inner product $\langle a,b\rangle$ is clearly equal to $tr(ab^*)$ and is positive definite by definition of an unshaded planar algebra.
Let $\h$ be the Hilbert space equal to the completion of $\wPl$ for the inner product $\langle\cdot ,\cdot\rangle$.
We denote its trace by $\Vert\cdot\Vert_\h$.
The Hilbert space $\h$ is equal to the orthogonal direct sum $\bigoplus_{n=0}^\infty\Pl_n$.
We prove in the next proposition that the left multiplication by elements of the graded vector space $\wPl$ is bounded for the preHilbert space structure.

\begin{prop}\label{prop_GJSW_mult_cont}
If $a\in \wPl$, there exists a positive constant $C>0$ such that, for any
$b\in \wPl,\,\Vert a\star b\Vert_\h\leqslant C\Vert b\Vert_\h$.
\end{prop}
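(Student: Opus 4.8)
The statement asserts that left multiplication by a fixed $a\in\wPl$ is a bounded operator on $\h$. Since $\wPl=\bigoplus_{n\geqslant 0}\Pl_n$ and $\star$ is bilinear, it suffices to prove the bound for a homogeneous $a\in\Pl_n$; the general case follows by summing the finitely many homogeneous components of $a$. So I fix $a\in\Pl_n$ and seek a constant $C>0$ with $\Vertvert{a\star b}_\h\leqslant C\Vertvert{b}_\h$ for all $b\in\wPl$.

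**Reduction to a single homogeneous $b$.** The product $a\star b$ for $b\in\Pl_m$ is described by the tangle that, reading the definition of $\star$, sums over all ways of capping off $r$ of the rightmost strings of $a$ with $r$ of the leftmost strings of $b$; concretely $a\star b=\sum_{r\geqslant 0}(a\star b)_{n+m-2r}$, a finite sum of homogeneous pieces indexed by the number $r$ of caps. If $b=\sum_m b_m$ with $b_m\in\Pl_m$, then different $b_m$ contribute to overlapping degrees, so I cannot naively split the norm. The clean way to organize this is to bound, for each fixed shift, the operator $b\mapsto(a\star b)_{\deg b+n-2r}$ that keeps only the term with exactly $r$ caps, show each such operator is bounded with a constant $C_r$, and observe that $C_r=0$ for $r$ large (since $a\in\Pl_n$ forces $r\leqslant n$, as one cannot cap more strings than $a$ has). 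Then $C=\sum_{r=0}^{n}C_r$ works.

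**The graphical norm estimate.** The heart of the matter, as the paper flags (\enquote{This is done by a graphical proof}), is estimating $\Vertvert{(a\star b)}_\h$ in terms of $\Vertvert{b}_\h$. I would compute $\Vertvert{a\star b}_\h^2=\langle a\star b,\,a\star b\rangle=tr\bigl((a\star b)\star(a\star b)^*\bigr)$ and represent this inner product as a single closed planar diagram: two copies of $b$ (one being $b^*$), two copies of $a$ (one being $a^*$), with the various cup/cap patterns prescribed by $\star$ and its adjoint. The standard Jones--Shlyakhtenko--Walker technique is to use the Cauchy--Schwarz (or the \enquote{pulling apart}) inequality on this diagram: one separates the two $b$-boxes by inserting a resolution of identity, thereby dominating the diagram by a diagram in which the $a$-dependence is concentrated in a scalar of the form $\Vertvert{b}_\h^2$ times a coefficient built only from $a$. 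Equivalently, one shows the tangle implementing $b\mapsto(a\star b)_{\deg b+n-2r}$ factors as $b\mapsto x\bullet b\mapsto(\text{capping})$ where capping off $r$ strands is a partial isometry up to a power of $\delta$, and left $\bullet$-multiplication by the bounded piece of $a$ is controlled. Tracking the powers of $\delta$ produced by closed loops gives an explicit $C_r$ depending on $n$, $\delta$, and $\Vertvert{a}_\h$.

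**The main obstacle.** The genuine difficulty is not any single algebraic identity but the combinatorial bookkeeping of the diagram for $\langle a\star b,a\star b\rangle$: the product $\star$ is itself a sum over $r$, so $(a\star b)\star(a\star b)^*$ is a double sum, and the cross terms mix strands of $a$, $a^*$, $b$, $b^*$ in ways that must all be bounded simultaneously by $\Vertvert{b}_\h^2$. The key technical step I expect to be delicate is justifying the Cauchy--Schwarz bound at the diagram level --- i.e. showing that after capping, the remaining $a$-$a^*$ portion of the diagram is a fixed scalar (independent of $b$) rather than an unbounded operator --- and correctly accounting for every closed loop so that the resulting constant $C$ is finite. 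Because each $\Pl_n$ is finite dimensional and $a$ is fixed, all the $a$-dependent quantities are finite; the content is purely in arranging the graphical estimate so that $b$ appears only through its $\h$-norm.
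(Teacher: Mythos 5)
Your organizational reductions are sound and match the paper's: you fix a homogeneous $a\in\Pl_n$, split $a\star b$ into the finitely many pieces $T_j(a,b)$ indexed by the number $j\leqslant n$ of capped strands, and aim to bound each piece by a constant times $\Vert b\Vert_\h$. (For homogeneous $b\in\Pl_m$ these pieces land in distinct graded subspaces $\Pl_{n+m-2j}$ and are therefore orthogonal, so the cross terms you worry about in your last paragraph are not actually present; your fixed-shift bookkeeping then handles non-homogeneous $b$ correctly.) The gap is that the one estimate carrying all the content --- that after the two $b$-boxes are pulled together, the residual $a$--$a^*$ portion of the diagram acts as a bounded scalar rather than an unbounded operator --- is exactly the step you defer to ``the standard technique'' without executing. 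Your suggested alternative, factoring the capping through a partial isometry composed with a controlled $\bullet$-multiplication, does not obviously work as stated: the caps couple strands of $a$ to strands of $b$, so $T_j(a,\cdot)$ does not visibly factor into an $a$-only operator composed with a $b$-only operator.

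The device the paper uses to close this gap is worth naming precisely. For each $j$ one equips $\Pl_{2j}$ with a second multiplication $\times$ (stacking boxes along $j$ strands on each side), making $(\Pl_{2j},\times,*)$ a finite dimensional $C^*$-algebra with norm $\Vert x\Vert_{\Pl_{2j}}=\sup\{\Vert x\times d\Vert_\h:\Vert d\Vert_\h=1\}$, and one checks the exact graphical identity $\Vert T_j(a,b)\Vert_\h^2=tr(\alpha_j\times\beta_j)$, where $\alpha_j\in\Pl_{2j}$ is built from $a,a^*$ with $n-j$ strands contracted and $\beta_j$ is built likewise from $b,b^*$. The key point is that $\alpha_j$ and $\beta_j$ are \emph{positive} elements of this $C^*$-algebra, because $\langle\alpha_j\times d,d\rangle$ is itself a squared norm $\Vert\gamma_j\Vert_\h^2$; one may therefore take self-adjoint square roots $a_j,b_j$ and estimate
$$tr(\alpha_j\times\beta_j)=tr(a_j\times b_j\times b_j\times a_j)=\Vert a_j\times b_j\Vert_\h^2\leqslant\Vert a_j\Vert_{\Pl_{2j}}^2\,\Vert b_j\Vert_\h^2,$$
with $\Vert b_j\Vert_\h^2=tr(\beta_j)=\Vert b\Vert_\h^2$, giving $C=\sum_{0\leqslant j\leqslant n}\Vert a_j\Vert_{\Pl_{2j}}$. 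This positivity argument is the precise form of the ``Cauchy--Schwarz at the diagram level'' you anticipated; without it (or an equivalent), the proposal is a correct plan but not yet a proof.
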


\begin{proof}
Let $j\geqslant 0$, and consider the vector space $\Pl_{2j}$.
We equip $\Pl_{2j}$ with the product $\times$ defined as follows:
$$\includegraphics[width=3.5cm]{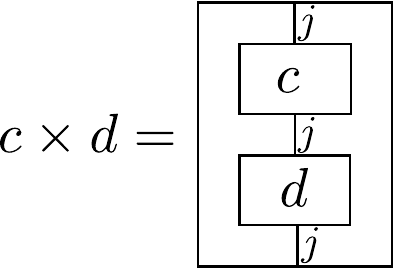},$$
where $c,d\in\Pl_{2j}$.
We equip $\Pl_{2j}$ with the involution $*$ coming from the planar algebra structure of $\Pl$.
The tangle acts \wrt\ the $*$-structure of $\Pl_{2j}$ by definition of an unshaded planar algebra.
Hence, for any $c,d\in\Pl_{2j}$ we have that $(c\times d)^*=d^*\times c^*$.
Let $\Vert\cdot\Vert_{\Pl_{2j}}$ be the norm:
$$\Vert a\Vert_{\Pl_{2j}}=\sup\{\Vert a\times d\Vert_\h,\,d\in\Pl_{2j},\,\Vert d\Vert_\h=1\},$$
where $a\in\Pl_{2j}$.
The $*$-algebra $(\Pl_{2j},\times,*)$ with the norm $\Vert\cdot\Vert_{\Pl_{2j}}$ is a (finite dimensional) $C^*$-algebra.

Let $n\geqslant 0$ and $j\leqslant n$, consider an element $a\in \Pl_n$ and
$$\includegraphics[width=3cm]{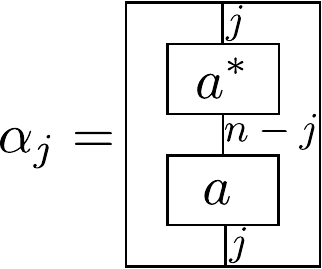}$$
in $\Pl_{2j}$.

Let us show that $\alpha_j$ is a positive element of the $C^*$-algebra $\Pl_{2j}$.
For this, we prove that for any $d\in \Pl_{2j}$,
 $\langle \alpha_j\times d,d\rangle\geqslant 0.$\\
We have that
$$\includegraphics[width=6cm]{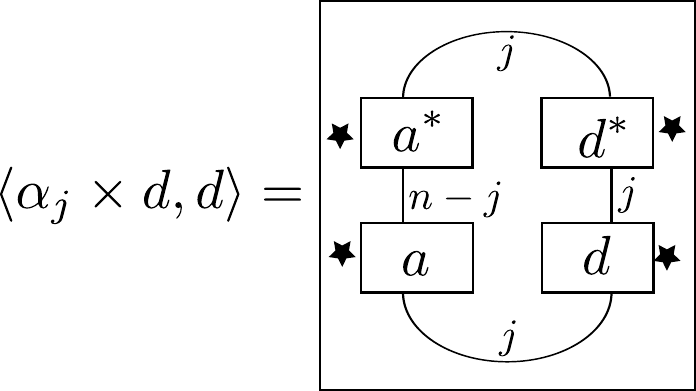}.$$
If we denote
$$\includegraphics[width=5cm]{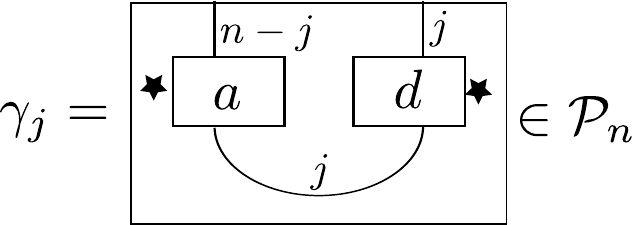},$$
we have that
$$\langle \alpha_j\times d,d\rangle=\Vert \gamma_j\Vert_\h^2\geqslant 0,$$
this tells us that $\alpha_j$ is positive.

Let us show that there exists a positive constant $C>0$, such that for any $m\geqslant 0$ and any vector $b\in\Pl_m$, we have that
$\Vert a\star b\Vert_\h\leqslant C\Vert b\Vert_\h.$
Let $j\leqslant\min(n,m)$, we have:
$$\includegraphics[width=8.5cm]{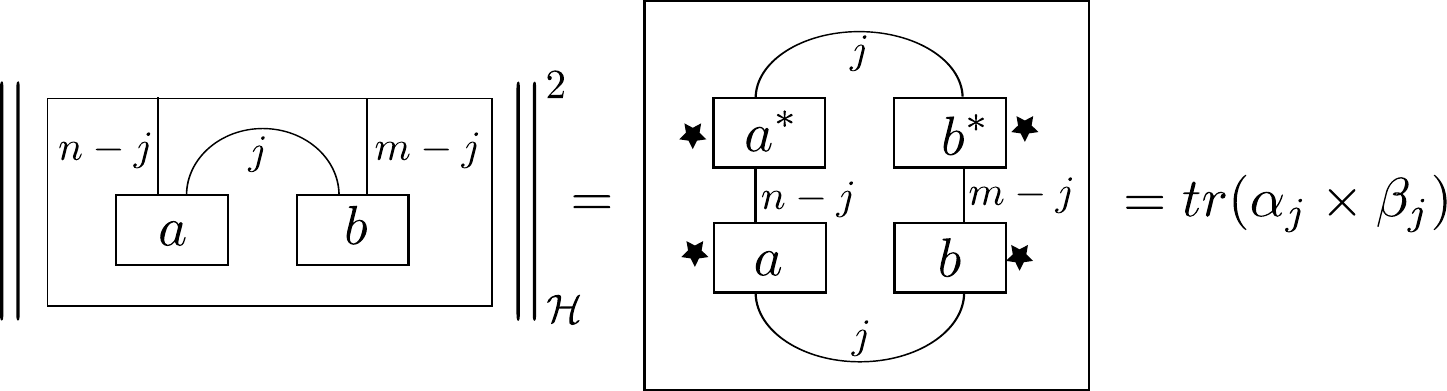}$$
where
$$\includegraphics[width=4cm]{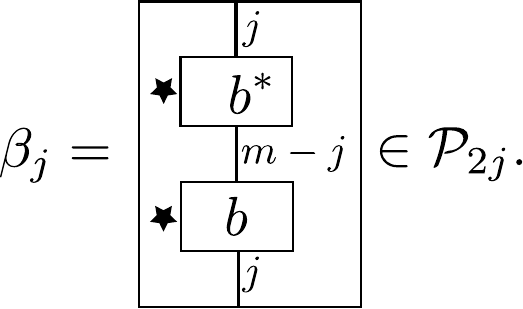}$$
We know that $\alpha_j$ and $\beta_j$ are positive operators of $\Pl_{2j}$; thus, there exists two self-adjoint elements $a_j,b_j\in\Pl_{2j}$ such that $a_j\times a_j=\alpha_j$ and $b_j\times b_j=\beta_j$.
If we look at the trace of the planar algebra $\Pl$:
\begin{align*}
tr(\alpha_j\times \beta_j)&=tr(a_j\times b_j\times b_j\times a_j)\\
&=\Vert a_j\times b_j\Vert_\h^2\leqslant \Vert a_j\Vert_{\Pl_{2j}}^2.\Vert b_j\Vert_\h^2.
\end{align*}
Clearly, for any $j$ we have that $\Vert b_j\Vert_\h=\Vert b\Vert_\h$.
Hence,
$$\Vert a\star b\Vert_\h\leqslant (\sum_{0\leqslant j\leqslant n}\Vert a_j\Vert_{\Pl_{2j}}).\Vert b\Vert_\h.$$
Thus for any $m\geqslant 0$ and any $b\in \Pl_m$,
$\Vert a\star b\Vert_\h\leqslant C\Vert b\Vert_\h,$
where
$C=\sum_{0\leqslant j\leqslant n}\Vert a_j\Vert_{\Pl_{2j}}$.
\end{proof}

We can define a representation of the $*$-algebra $\wPl$.
Consider the left multiplication $\pi:\wPl\longrightarrow \mathcal B(\h)$, such that $\pi(a)(b)=a\star b$ for any $a\in\wPl$ and $b\in\wPl$.
We write $M_\Pl$ the \vna\, generated by $\pi(\wPl)$.
We extend the representation to $M_\Pl$ and still denote it by $\pi$.
The right multiplication is also bounded, this gives us a representation of the opposite algebra:
$\rho:M_\Pl^{\text{op}}\longrightarrow \mathcal B(\h)$ such that $\rho(a)(b)=b\star a$.

\begin{rem}
The trace $tr$ of the graded algebra $Gr(\Pl)$ can be extend on the \vna\ $\MP$ with the formula $tr(a)=\langle a,1\rangle$.
It gives normal faithful trace on $\MP$ that we still denote by $tr$, hence $\MP$ is a finite \vna.

Consider the GNS representation of $\MP$ on the \Hs\ $\LMP$ associated to the trace $tr$.
This representation is conjugate with the representation $\pi:\MP\longrightarrow \Bo(\h)$.
We identify those two representations.
Furthermore, we identify the \vna\ $\MP$ and its dense image in the \Hs\ $\LMP$.
\end{rem}

We have the equality $\Vert a\bullet b\Vert_2=\Vert a\Vert_2\Vert b\Vert_2$ if $a\in\Pl_n$ and $b\in\Pl_m$.
By the triangle inequality, the bilinear function
\begin{align*}
Gr(\Pl)\times Gr(\Pl)&\longrightarrow Gr(\Pl)\\
(a,b)&\longmapsto a\bullet b
\end{align*}
is continuous for the norm $\Vert \cdot\Vert_2$.
We extend this operation on $L^2(M_\Pl)\times L^2(M_\Pl)$ and still denote it by $\bullet$.


\section{The cup subalgebra}\label{section_GJSW_cup}


Let $A\subset M_\Pl$ be the abelian \vna\, generated by the element cup $\cup\in\Pl_2$.
We call it the cup subalgebra.


\subsection{The bimodule structure of $_AL^2(M_\Pl)_A$}\label{subsection_GJSW_cup_coarse}


Let $n\geqslant 2$ and $V_n$ be the subspace of $\Pl_n$ of elements which vanish when a cap is placed at the top right and vanish when a cap is placed at the top left, i.e.
$$\includegraphics[width=8cm]{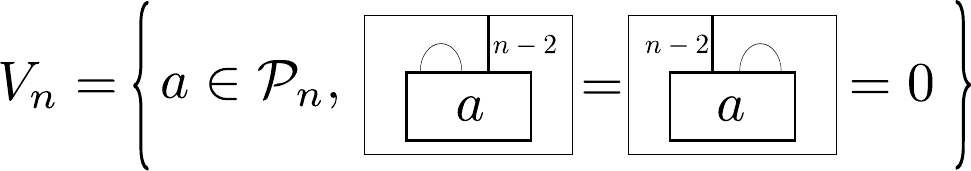}. $$
Let $$V=\bigoplus_{n=2}^\infty V_n,$$
be the orthogonal direct sum in $L^2(M_\Pl)$.
We consider the bimodule generated by $V$ that we denote by $\AVA$.

Let us write $_AL^2(M_\Pl)_A$ as a direct sum of bimodules:

\begin{prop}\label{prop_GJSW_LM_sum_V_P_A}
The bimodule $_AL^2(M_\Pl)_A$ is isomorphic to the direct sum
$$\ALAA\oplus\APA\oplus\AVA,$$
where $\APA$ is the bimodule generated by the $1$-box space $\Pl_1$.
\end{prop}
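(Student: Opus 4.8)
The plan is to exhibit the three summands as mutually orthogonal sub-bimodules of $\LMP=\bigoplus_{n\geqslant 0}\Pl_n$ whose algebraic sum is dense, and to identify the first with $\ALAA$. Since the empty diagram $1$ spans $\Pl_0$ and is the unit of $(\wPl,\star)$, and since $A$ is abelian, the bimodule generated by $\Pl_0$ is $\overline{A\star 1\star A}=\overline{A\star A}=\overline{A}=\LA$, on which the left and right $A$-actions coincide; this is exactly the trivial correspondence $\ALAA$. It then remains to prove (i) that $\LA$, $\APA$ and $\AVA$ are pairwise orthogonal, and (ii) that their algebraic sum is dense.

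For this I would first record how $\cup$ acts on the box spaces. Let $e_L,e_R:\Pl_n\to\Pl_{n-2}$ denote the maps capping the two leftmost, respectively rightmost, strands; these are, up to a positive scalar, the adjoints of $\cup\bullet(-)$ and $(-)\bullet\cup$. A planar zig-zag computation of the tangle defining $\star$ shows that, as operators on $\bigoplus_n\Pl_n$, left and right $\star$-multiplication by $\cup$ take the triangular form
$$\cup\star a=\cup\bullet a+\lambda\,a+\mu\,e_L(a),\qquad a\star\cup=a\bullet\cup+\lambda\,a+\mu\,e_R(a),$$
for fixed positive scalars $\lambda,\mu$, a sum of a degree-raising, a degree-preserving and a degree-lowering term. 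By definition $V_n=\ker e_L\cap\ker e_R$, so on a core $v\in V_n$ the cap terms vanish and these reduce to $\cup\star v=\cup\bullet v+\lambda\,v$ and $v\star\cup=v\bullet\cup+\lambda\,v$. Since $e_L,e_R$ are (up to scalars) the adjoints of adding a cup, one gets the orthogonal splitting $\Pl_n=V_n\oplus(\cup\bullet\Pl_{n-2}+\Pl_{n-2}\bullet\cup)$, because $V_n=(\mathrm{range}\,e_L^*+\mathrm{range}\,e_R^*)^\perp$; iterating it and peeling off outer cups expresses every element of $\Pl_n$ as an orthogonal combination of vectors $\cup^{\bullet j}\bullet v\bullet\cup^{\bullet k}$ whose core $v$ lies in $\Pl_0$, in $\Pl_1$, or in some $V_m$ with $m\geqslant 2$.

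To turn this $\bullet$-description into the $\star$-bimodule statement, I would argue by downward induction on the degree that, for a fixed core $v$, the closed $A$-bimodule $\overline{A\star v\star A}$ equals the closed linear span of $\{\cup^{\bullet j}\bullet v\bullet\cup^{\bullet k}\}_{j,k\geqslant 0}$. The displayed formulas are triangular: applying $\cup\star(-)$ to such a word has top term $\cup^{\bullet(j+1)}\bullet v\bullet\cup^{\bullet k}$, while the two lower terms are again cup-words on the \emph{same} core $v$ — indeed $e_L$ applied to the word either removes an outer cup (when $j\geqslant 1$, producing a loop factor $\delta$) or vanishes (when $j=0$, since it then caps the two leftmost strands of $v\in V$). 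Thus no cup-word on $v$ ever produces a word built on a different core, and a triangular change of basis identifies the cyclic subspace $A\star v\star A$ with the span of the $\cup^{\bullet j}\bullet v\bullet\cup^{\bullet k}$. Summing over the cores $\Pl_0,\Pl_1$ and the $V_m$ shows that the algebraic sum of $\LA$, $\APA$ and $\AVA$ contains every $\Pl_n$, hence is dense.

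For orthogonality, note that $\cup$ changes the number of strands by an even integer, so the $A$-action preserves the parity of the grading; as $\Pl_1$ sits in odd degrees and $\LA$ in even degrees, $\APA\perp\LA$ immediately. The remaining orthogonalities, between $\AVA$ and each of $\LA$ and $\APA$ within a fixed parity, follow from the defining vanishing of $V$: evaluating $\langle\cup^{\bullet j}\bullet v\bullet\cup^{\bullet k},\,\cup^{\bullet j'}\bullet u\bullet\cup^{\bullet k'}\rangle$ via $\langle x,y\rangle=tr(xy^*)$ and the graphical calculus forces, whenever a core $v\in V_m$ with $m\geqslant 2$ is paired against a lower core $u$ (in $\Pl_0$ or $\Pl_1$), a cap to land on $v$, so the inner product collapses to $0$. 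I expect the main obstacle to be exactly this bookkeeping: making the iterated cap-filtration genuinely orthogonal and direct — controlling the overlap $\cup\bullet\Pl_{n-2}\cap\Pl_{n-2}\bullet\cup$ through the Temperley--Lieb relations among cups and caps — together with the no-leakage property that confines the $\star$-action of $A$ to the span of cup-words on a single core. The finer identification of $\APA$ and $\AVA$ as sums of coarse correspondences is a separate, harder matter and is not needed here.
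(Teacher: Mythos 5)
Your proposal is correct and follows essentially the same route as the paper: the same three subspaces spanned by cup-words on cores in $\Pl_0$, $\Pl_1$ and $V$, the same identification of each with the corresponding bimodule via the triangular action of $\cup\star(-)$ and $(-)\star\cup$, density from $V_n^\perp=\cup\bullet\Pl_{n-2}+\Pl_{n-2}\bullet\cup$ (the paper quotes this as \cite{JSW_orthogonal_approach_planar_algebra}[Lemma 4.5]), and orthogonality from parity plus the vanishing of capped elements of $V$. The one ``obstacle'' you flag, controlling the overlap $\cup\bullet\Pl_{n-2}\cap\Pl_{n-2}\bullet\cup$, is not actually needed here: for this proposition it suffices that the algebraic sum of the three subspaces is dense and that the three subspaces are pairwise orthogonal, exactly as the paper argues, and the finer orthogonal bases are deferred to the later lemmas.
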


\begin{proof}
The subspace $\LA\subset \LMP$ is a bimodule and, by definition, $\APA$ and $\AVA$ are subbimodules of $\LMP$.
Hence to prove the proposition, it is sufficient to show that the Hilbert space $L^2(M_\Pl)$ is equal to the orthogonal direct sum
$$L^2(A)\oplus \APA\oplus\AVA.$$
Consider the three closed vector subspaces $E_1,E_2,E_3$ of $\LMP$ where
\begin{itemize}
  \item $E_1$ is spanned by the family of vectors $\{\cupbul k,\,k\geqslant 0\}$,\\
  \item $E_2$ is spanned by the family of vectors $\{\cupbul r\bullet b\bullet \cupbul k,\,b\in \Pl_1,\,r,k\geqslant 0\}$ and\\
  \item $E_3$ is spanned by the family of vectors $\{\cupbul r \bullet v\bullet \cupbul k,\,r,k\geqslant 0,\,v\in V\}$.
\end{itemize}
Let us show that $E_1=\LA$.
By definition of the space $E_1$, $\cup\in E_1$.
It is easy to see that $E_1$ is a bimodule.
Hence the bimodule generated by $\cup$, which is $\LA$, is included in $E_1$.
For the converse inclusion, an easy induction on $k$ shows that $\cupbul k\in A$.

Let us show that $E_2=\APA$.
By definition of the space $E_2$, $\Pl_1\subset E_1$.
It is easy to see that $E_2$ is a bimodule.
Hence the bimodule generated by $\Pl_1$, which is $\APA$, is included in $E_2$.
For the converse inclusion, an easy induction on $r$ and $k$ shows that $\cupbul r\bullet b\bullet\cupbul k\in \APA$ for any $b\in \Pl_1$.

Let us show that $E_3=\AVA$.
By definition, $V\subset E_3$ and clearly $E_3$ is stable by the actions of $\cup$, hence is a bimodule.
Therefore, $\AVA\subset E_3$.
For the converse inclusion, fix a $v\in V$.
An easy induction show that for any $r,k\geqslant 0$, $\cupbul r \bullet v\bullet \cupbul k\in\AVA$.

Consider the \Hs\ equal to the sum $E=E_1+E_2+E_3$.
Let us show that $E=L^2(M_\Pl)$.
To do this, we show that $\Pl_n\subset E$ for any $n\geqslant 0$.
We proceed by induction on $n$.

By definition, $\Pl_0\subset E_1$ and $\Pl_1\subset E_2$.
Consider $n\geqslant 2$ and suppose that $\Pl_{n-1}$ and $\Pl_{n-2}$ are included in $E$.
Let us denote the orthogonal of $V_n$ inside $\Pl_n$ by $W_n$.
By \cite{JSW_orthogonal_approach_planar_algebra}[Lemma 4.5.], we have that $W_n$ is the space spanned by element that can be written $y\bullet \cup$ and $\cup\bullet z$, where $y,z\in\Pl_{n-2}$.
It is easy to see that for any $x\in E$, we have that $\cup\bullet x\in E$ and $x\bullet \cup\in E$.
So $W_n\subset E$; thus, by definition of $E_3$, $V_n\subset E$.
So $\Pl_n\subset E$, we have proved that $E$ is a dense subspace of $L^2(M_\Pl)$.

Let us show that the $E_i$ are pairwise orthogonal.
Let $n\geqslant 2$, $v\in V_n$, $b\in \Pl_1$, and $k,l,r,m\geqslant 0$.
Consider the vectors $\cupbul r \bullet v\bullet \cupbul k$ and $\cupbul l \bullet b\bullet \cupbul m$.
They are in the vector spaces $\Pl_{2(r+k)+n}$ and $ \Pl_{2(l+m)+1}$.
The spaces $\{\Pl_n,\,n\geqslant 0\}$ are pairwise orthogonal by definition of the inner product on the graded vector space $Gr(\Pl)$.
So $\cupbul r \bullet v\bullet \cupbul k$ and $\cupbul l \bullet b\bullet \cupbul m$ are orthogonal if $2(r+k)+n\neq 2(l+m)+1$.
Suppose $2(r+k)+n= 2(l+m)+1$, by hypothesis $n\geqslant 2$, hence $l>r$ or $m>k$, in any case, $v$ will get a cap at the top right or left in the inner product $\langle \cupbul r \bullet v\bullet \cupbul k,\cupbul l \bullet b\bullet \cupbul m\rangle$.
Therefore, they are orthogonal and so are $E_2$ and $E_3$.

Let $r,k,l\geqslant 0$, $n\geqslant 2$ and $v\in V_n$.
Consider the inner product $\langle \cupbul r\bullet v\bullet \cupbul k,\cupbul l\rangle.$
It is equal to $0$ if $2r+2k+n\neq 2l$ by the orthogonality of the spaces $P_i$.
Suppose $2r+2k+n= 2l$, we have that $n$ is an even number and
$$\langle \cupbul r\bullet v\bullet \cupbul k,\cupbul l\rangle=\delta^{r+k}\langle v,\cupbul{\frac{n}{2}}\rangle.$$
This must be equal to $0$ because $v$ will get a cap at the top left in this inner product; thus, $E_1$ is orthogonal to $E_3$.

The space $E_1$ is included in the orthogonal direct sum $\bigoplus_n\Pl_{2n}$, and $E_2$ is included in the orthogonal direct sum $\bigoplus_n\Pl_{2n+1}$, therefore $E_1\perp E_2$.
Hence, the vector spaces $E_i$ are in direct sum and their sum is equal to the \Hs\ $\LMP$.
We have proved that $L^2(M_\Pl)$ is equal to the orthogonal direct sum
$$L^2(A)\oplus \APA\oplus\AVA.$$
\end{proof}

\begin{prop}\label{prop_GJSW_AVA}
The map
\begin{align*}
\eta_V : {\AVA} & \rightarrow  \lN\otimes V\otimes \lN \\
\dkr \cupbul k\bullet v\bullet \cupbul r & \mapsto  e_k\otimes v\otimes e_r
\end{align*}
defined a unitary transformation such that
\begin{align}\label{equa_GJSW_entrelacement_cup}
\eta_V\pi(\U)\eta_V^*&=(\sh)\otimes 1_V\otimes 1_\lN \text{ and}\\
\eta_V\rho(\U)\eta_V^*&=1_\lN\otimes 1_V\otimes (\sh).
\end{align}
In other words, the map $\U\longmapsto \sh$ defines a normal faithful representation of the \vna\ $A$ on the \Hs\ $\lN$ and the bimodule $\AVA$ is isomorphic to a direct sum of coarse correspondences.
\end{prop}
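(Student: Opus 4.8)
The plan is to exhibit an explicit orthonormal basis of $\AVA$ adapted to the cup, to read off the left and right actions of $\cup$ as tridiagonal (``shift'') operators by a direct diagrammatic expansion of the product $\star$, and then to repackage the result as a sum of coarse correspondences. First I would fix an orthonormal basis $\{v_i\}$ of $V=\bigoplus_{n\geqslant 2}V_n$ (a union of bases of the $V_n$) and show that $\{\dkr\,\cupbul k\bullet v_i\bullet\cupbul r:\ k,r\geqslant 0\}$ is an orthonormal basis of $\AVA$. Normalisation is immediate from $\Vert a\bullet b\Vert_2=\Vert a\Vert_2\Vert b\Vert_2$, since $\Vert\cupbul k\Vert_2=\de^{k/2}$ gives $\Vert\cupbul k\bullet v_i\bullet\cupbul r\Vert_2=\de^{(k+r)/2}$. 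For orthogonality, vectors of different underlying degree $2(k+r)+n$ are orthogonal because the $\Pl_n$ are mutually orthogonal in $\wPl$; when the degrees agree but the cup counts differ, the inner product forces a cap onto $v_i$ at the top left or top right, which vanishes by the defining property of $V_n$ — exactly the mechanism used for the orthogonality of $E_3$ in the proof of Proposition \ref{prop_GJSW_LM_sum_V_P_A}. Completeness of this family is the equality $E_3=\AVA$ established there. Hence $\eta_V$ carries an orthonormal basis onto the orthonormal basis $\{e_k\otimes v_i\otimes e_r\}$ of $\lN\otimes V\otimes\lN$ and extends to a unitary.

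The heart of the proof is the computation of $\pi(\cup)$. I would expand $\cup\star x$, for $x\in\Pl_m$, as the sum over the number $t\in\{0,1,2\}$ of strands of $\cup$ contracted with the leftmost strands of $x$. The term $t=0$ is $\cup\bullet x$; the term $t=2$ caps off the two leftmost strands of $x$; and the term $t=1$ straightens, by the \emph{zig-zag} (snake) identity, back into $x$ itself. Applied to $x=\cupbul k\bullet v\bullet\cupbul r$ with $v\in V_n$ this yields
$$\cup\star(\cupbul k\bullet v\bullet\cupbul r)=\cupbul{k+1}\bullet v\bullet\cupbul r+\cupbul k\bullet v\bullet\cupbul r+\de\,\cupbul{k-1}\bullet v\bullet\cupbul r,$$
with the convention $\cupbul{-1}=0$: for $k\geqslant 1$ the cap in the $t=2$ term closes the first cup of $\cupbul k$ into a loop, producing the factor $\de$ and lowering $k$ by one, while for $k=0$ this cap lands on the top left of $v$ and annihilates it, which is precisely where the left-cap condition defining $V_n$ is used. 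Dividing by $\de^{(k+r)/2}$ converts this into $\pi(\cup)\xi_k=\deltafrac{1}\xi_{k+1}+\xi_k+\deltafrac{1}\xi_{k-1}$ on the normalised vectors $\xi_k=\dkr\,\cupbul k\bullet v\bullet\cupbul r$, i.e. $\eta_V\pi(\cup)\eta_V^*=(\deltafrac{1}(\sh)+1)\otimes 1_V\otimes 1_\lN$; subtracting $1$ and dividing by $\deltafrac{1}$ gives the first relation $\eta_V\pi(\U)\eta_V^*=(\sh)\otimes 1_V\otimes 1_\lN$. The second relation comes identically from the right action $\rho(\cup):x\mapsto x\star\cup$, reflecting all diagrams left-to-right and now using the vanishing of $V_n$ under a cap on the top right to remove the $r=0$ boundary term. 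I expect this diagrammatic step — getting the three coefficients right and recognising that the two cap conditions defining $V_n$ are exactly what kill the two boundary terms — to be the main obstacle.

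Finally I would read off the stated interpretation. The relations show that the $A$-action on $\AVA$ factors through the single operator $\sh$ on $\lN$, that is, through $\pi_0:\U\mapsto\sh$. The vector $e_0$ is cyclic for $\sh$ (the tridiagonal form with nonzero off-diagonals makes $\{(\sh)^n e_0\}_{n\geqslant 0}$ total), so $(\lN,\pi_0)$ is a cyclic representation of the abelian algebra $A$, and its scalar spectral measure is the distribution of $\sh$, namely the semicircle law on $[-2,2]$. It then remains to identify $(\lN,\pi_0)$ with ${}_{A}\LA$ as a left $A$-module, which amounts to checking that the trace distribution of $\U$ (its distribution in the state $tr=\langle\,\cdot\,1,1\rangle$) is equivalent to the semicircle law on $[-2,2]$; here the hypothesis $\de>1$ is what guarantees that the spectrum of $\U$ equals $[-2,2]$ with no atom escaping the interval, so the two measures are mutually absolutely continuous. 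Granting this, $\pi_0$ is a normal faithful representation of $A$, and decomposing $\lN\otimes V\otimes\lN\cong\bigoplus_i\lN\otimes\lN$ along $\{v_i\}$ with $A$ acting by $\pi_0$ on the left leg and by $\pi_0$ on the right leg exhibits each summand as the coarse correspondence ${}_{A}\LA\otimes\LA_{A}$, so $\AVA$ is a direct sum of coarse correspondences.
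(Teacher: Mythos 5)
Your argument is correct and is essentially the paper's own: the paper proves this proposition simply by citing \cite{JSW_orthogonal_approach_planar_algebra}[Theorem 4.9], and your explicit reconstruction --- the orthonormal basis $\{\dkr\,\cupbul k\bullet v_i\bullet\cupbul r\}$, the three-term expansion of $\cup\star(\,\cdot\,)$ into the $t=0,1,2$ contraction terms with the two boundary terms killed by exactly the two cap conditions defining $V_n$, and the resulting identification of $\pi(\U)$ and $\rho(\U)$ with $(\sh)\otimes 1\otimes 1$ and $1\otimes 1\otimes(\sh)$ --- is precisely that proof. The one point you leave as ``granting this,'' namely that the trace-distribution of $\U$ (the spectral measure of $s+s^*-\de^{-1/2}e_{00}$ at $e_0$) is mutually absolutely continuous with the semicircle law so that $\U\longmapsto\sh$ is a normal \emph{faithful} representation of $A$, is correctly isolated and is indeed exactly where the hypothesis $\de>1$ enters.
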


\begin{proof}
This is the same proof that the one given in \cite{JSW_orthogonal_approach_planar_algebra}[theorem 4.9.] for a subfactor planar algebra.
\end{proof}
This proposition is telling us that to understand the bimodule structure of $_AL^2(M_\Pl)_A$ is the same that to understand the bimodule structure of $\APA$.

\subsection{The bimodule generated by the $1$-box space}\label{subsection_GJSW_bimodule_struct_P_1}

We suppose, in this section, that $\Pl_1\neq\{0\}$.
Let us fix an element $b\in\Pl_1$ such that $\Vert b\Vert_2=1$.
We denote the bimodule generated by $b$ by $_A{\overline b}_A$.

\begin{lemm}\label{lem_ortho_basis_AbA}
The set
$$E_b=\{\dr b\bullet \cup ^{\bullet r},\,r\geqslant 0\}\cup \{\dkr \cup^{\bullet k}\bullet Z_b\bullet\cup^{\bullet r},\,k,r\geqslant 0\}$$
is an orthonormal basis of the Hilbert space $\AbA$, where
$$Z_b=\frac{\cup\bullet b-\dmi b\bullet \cup}{\sqrt{\delta-\dmi}}.$$
\end{lemm}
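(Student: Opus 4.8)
The plan is to check the three defining properties of an orthonormal basis separately: that every vector of $E_b$ lies in $\AbA$, that the vectors are orthonormal, and that their closed linear span exhausts $\AbA$. I would begin with the normalizations. By sphericality a closed loop contributes a factor $\de$, so $\Vert\cup\Vert_2^2=\de$ and more generally $\Vert\cupbul r\Vert_2=\delta^{\frac r2}$; combined with the multiplicativity $\Vert x\bullet y\Vert_2=\Vert x\Vert_2\Vert y\Vert_2$ this already gives $\Vert\dr b\bullet\cupbul r\Vert_2=1$. For the second family I would first compute the single diagrammatic pairing $\langle\cup\bullet b,b\bullet\cup\rangle=\Vert b\Vert_2^2=1$ (the two cups join the two copies of $b$ along one strand, creating no closed loop). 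Expanding then yields both $\langle Z_b,b\bullet\cup\rangle=0$ and
$$\Vert Z_b\Vert_2^2=\frac{\de-2\dmi+\dmi}{\de-\dmi}=1,$$
where the hypothesis $\de>1$ is exactly what makes $Z_b$ well defined; hence $\Vert\dkr\cupbul k\bullet Z_b\bullet\cupbul r\Vert_2=1$ as well.

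The crucial structural fact I would isolate is that $Z_b$ is annihilated by a cap on its right. Writing $E_r\colon\Pl_3\to\Pl_1$ for the map capping the two rightmost strands, one has $E_r(\cup\bullet b)=b$ (a zigzag straightening, no loop) and $E_r(b\bullet\cup)=\de\,b$ (the cup closes into a loop), so $E_r(Z_b)\propto b-\dmi\cdot\de\,b=0$. All orthogonality then reduces to this identity. Vectors in different graded pieces $\Pl_n$ are automatically orthogonal, leaving only the finitely many same-grade pairings $\langle\cupbul k\bullet Z_b\bullet\cupbul r,\cupbul{k'}\bullet Z_b\bullet\cupbul{r'}\rangle$ with $k+r=k'+r'$ and $k\neq k'$, and $\langle b\bullet\cupbul r,\cupbul k\bullet Z_b\bullet\cupbul{r'}\rangle$ with $r=k+r'+1$. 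In each such pairing a planar diagram chase shows that the surplus cups on one side force a cap onto the two rightmost strands of one copy of $Z_b$ (using that $\ast$ preserves the left/right orientation of a box), so the pairing vanishes by $E_r(Z_b)=0$. This diagrammatic reduction is the step I expect to be the main obstacle: one must verify that the nested Temperley--Lieb cappings always deposit a \emph{right} cap on some $Z_b$, rather than leaving a copy untouched or capped only on the left (where $E_\ell(Z_b)\neq0$).

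Finally I would treat span and containment together. Each $\dr b\bullet\cupbul r$ is visibly in the bimodule, and since $Z_b\in\mathrm{span}\{\cup\bullet b,\,b\bullet\cup\}$ every $\cupbul k\bullet Z_b\bullet\cupbul r$ lies in $\mathrm{span}\{\cupbul{k+1}\bullet b\bullet\cupbul r,\ \cupbul k\bullet b\bullet\cupbul{r+1}\}\subset\AbA$, so $E_b\subset\AbA$. By the induction used in Proposition \ref{prop_GJSW_LM_sum_V_P_A}, $\AbA$ is the closed span of $\{\cupbul k\bullet b\bullet\cupbul r:\,k,r\geqslant0\}$, hence it suffices to recover these vectors from $E_b$. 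The definition of $Z_b$ rearranges to
$$\cupbul{k+1}\bullet b\bullet\cupbul r=\sqrt{\de-\dmi}\;\cupbul k\bullet Z_b\bullet\cupbul r+\dmi\,\cupbul k\bullet b\bullet\cupbul{r+1},$$
in which the first summand is a scalar multiple of the $E_b$-vector $\dkr\cupbul k\bullet Z_b\bullet\cupbul r$. An induction on the number of cups on the left, with base case $b\bullet\cupbul r=\delta^{\frac r2}\,\dr b\bullet\cupbul r$, then expresses every $\cupbul k\bullet b\bullet\cupbul r$ as a finite combination of vectors of $E_b$. Therefore $\overline{\mathrm{span}}\,E_b=\AbA$, and together with the orthonormality established above this shows that $E_b$ is an orthonormal basis of $\AbA$.
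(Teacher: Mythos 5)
Your proposal is correct and follows essentially the same route as the paper: the same norm computations via $\Vert x\bullet y\Vert_2=\Vert x\Vert_2\Vert y\Vert_2$ and $\langle\cup\bullet b,b\bullet\cup\rangle=1$, orthogonality reduced to the grading together with the key fact that $Z_b$ is killed by a cap on its two rightmost strands, and the span identified with the closed span of $\{\cupbul k\bullet b\bullet\cupbul r\}$ by an induction inverting the defining relation of $Z_b$. The diagram chase you flag as the main obstacle is exactly the step the paper carries out pictorially, and it does go through: whenever the grades match but the cup counts differ, the surplus cups of one vector land precisely on the two rightmost legs of the other's $Z_b$.
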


\begin{proof}
Let us prove that $E_b$ is a family of unit vectors.
Note that
$$\Vertvert{a\bullet b}_2=\Vertvert{a}_2\Vert b\Vert_2.$$
This implies that
$$\Vertvert{\deltamoin{r}b\bullet \cupbul{r}}_2=\deltamoin r \Vertvert b_2 \Vertvert\cup_2^r=\Vertvert b_2=1,$$
and
$$\Vertvert{\deltamoin{k+r}\cupbul{ k} \bullet Z_b\bullet \cupbul {r}}_2=\Vertvert{ Z_b}_2.$$
Let us compute $\Vertvert{ Z_b}_2$:
We have that
$$\Vertvert{ Z_b }_2^2=\frac{1}{\delta-\delta^{-1}}(\Vertvert{ \cup\bullet b}^2_2+\delta^{-2}\Vertvert{b\bullet \cup}_2^2-\delta^{-1}\langle \cup\bullet b,b\bullet \cup\rangle-\delta^{-1}\langle b\bullet \cup, \cup\bullet b\rangle).$$
The inner product
$$\includegraphics[width=7cm]{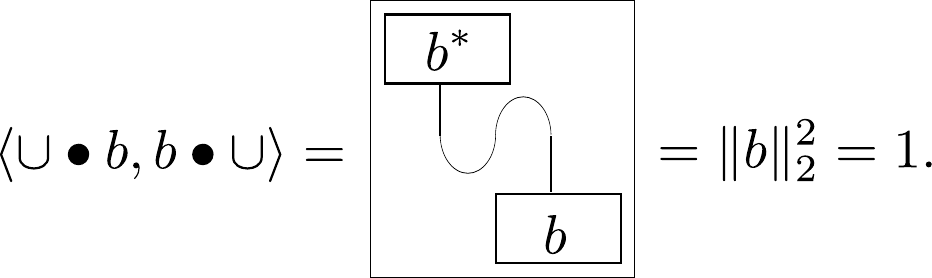}$$
Hence, $\langle b\bullet \cup, \cup\bullet b\rangle=\overline{\langle \cup\bullet b,b\bullet \cup\rangle}=1;$
thus, $\Vertvert{Z_b}_2=1$. We have proved that $E_b$ is a family of unit vectors.

Let us show that $E_b$ is an orthogonal family.
Consider the family of vectors
$$\{\delta^{-\frac{k+r}{2}}\cupbul k\bullet Z_b\bullet\cupbul r,\,k,r\geqslant 0\}.$$
Let $k,r,k_1,r_1,k_2,r_2\geqslant 0$ be some natural numbers.
The spaces $\Pl_n$ are pairwise orthogonal and $\cupbul k\bullet Z_b\bullet\cupbul r\in \Pl_{2(k+r)+3}$, thus
$$\langle \cupbul{r_1}\bullet Z_b\bullet\cupbul{k_1},\cupbul{r_2}\bullet Z_b\bullet\cupbul{k_2}\rangle=0 \,\,\text{if}\,\,  k_1+r_1\neq k_2+r_2.$$
Suppose $k_1+r_1= k_2+r_2$, and $r_1\neq r_2$,
$$\includegraphics[width=13cm]{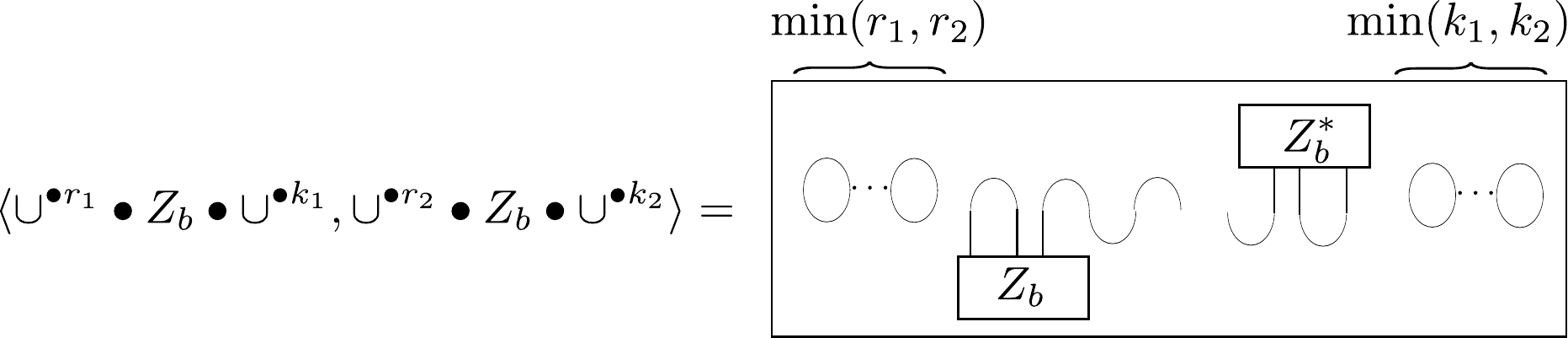}$$
We have that
$$\includegraphics[width=10cm]{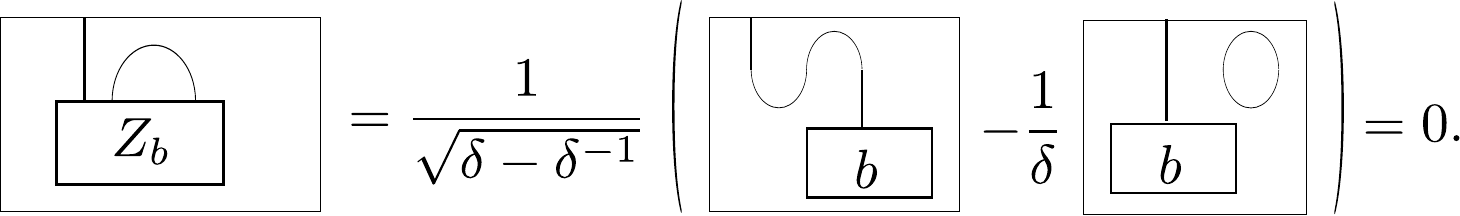}$$
Thus,
$$\langle \cupbul{r_1}\bullet Z_b\bullet\cupbul{k_1},\cupbul{r_2}\bullet Z_b\bullet\cupbul{k_2}\rangle=0.$$
Hence, the inner product
$\langle \cupbul{r_1}\bullet Z_b\bullet \cupbul{k_1},\cupbul{r_2}\bullet Z_b\bullet \cupbul{k_2}\rangle$ is non null
if and only if $r_1=r_2$ and $k_1=k_2$.
This is telling us that $\{\cupbul r\bullet Z_b\bullet \cupbul k,\,r,k\geqslant 0\}$ is an orthogonal family and then an orthonormal family of vectors.

Let us show that $\{b\bullet \cupbul r,\,r\geqslant 0\}$ is an orthogonal family.
The elements $b\bullet \cupbul{r_1}$ and $b\bullet \cupbul{r_2}$ belong to $\Pl_{1+2r_1}$ and $\Pl_{1+2r_2}$, so they are orthogonal if $r_1\neq r_2$.

Let us show that the two sets
$$\{\dr b\bullet \cup ^{\bullet r},\,r\geqslant 0\}$$
and
$$\{\dkr \cup^{\bullet k}\bullet Z_b\bullet\cup^{\bullet r},\,k,r\geqslant 0\}$$
are orthogonal.
Consider $ b\bullet \cupbul{r_1} $ and $\cupbul k\bullet Z_b\bullet\cupbul{r_2}$.
The vector $b\bullet \cupbul{r_1}$ belongs to $\Pl_{2r_1+1}$ and $\cupbul k\bullet Z_b\bullet\cupbul{r_2}\in\Pl_{2(k+r_2)+3}$.
These are orthogonal if $r_1\neq k+r_2+1$.
Supposing $r_1= k+r_2+1$, we get that $r_1\geqslant r_2+1.$
Thus,
$$\includegraphics[width=10cm]{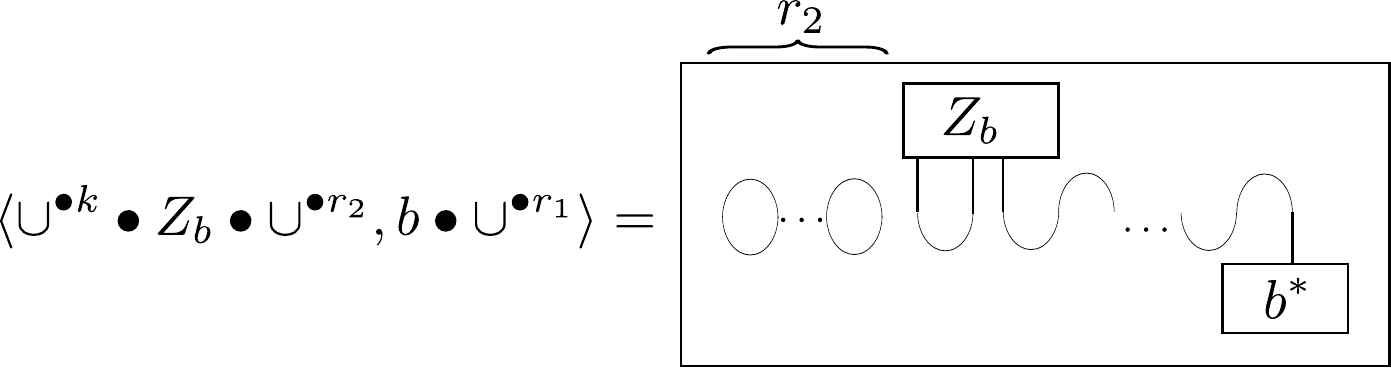}.$$
This is equal to zero because $Z_b$ has a cap on the top right.

We have shown that $E_b$ is an orthonormal family.

Let us show that the closed vector space spanned by $E_b$, that we denote by $X_b$, is equal to $\AbA$.
We see clearly that $X_b$ is stable by left and right multiplication by $\cup$, hence $X_b$ is a bimodule.
The space $X_b$ contains $b$, therefore it contains the bimodule generated by $b$ which is $\AbA$.\\
Let us show that $X_b$ is contained in $\AbA$, which is equivalent to showing that $E_b$ is included in $\AbA$.
The space $X_b$ is spanned by the family of vectors $\{\cupbul r \bullet b\bullet\cupbul k,\,r,k\geqslant 0\}$.
An easy induction on $r$ and $k$ shows that $\cupbul r \bullet b\bullet\cupbul k\in\AbA$ for any $r,k$.
\end{proof}

\begin{prop}\label{prop_decomposition_bimodule_b}
Consider the operator $\eta_b:\AbA\rightarrow \lNN$ defined as follows:
\begin{align*}
\eta_b(\dr b\bullet \cupbul r)&=e_0\otimes e_r \text{ and}\\
\eta_b(\dkr \cupbul k\bullet Z_b\bullet \cupbul r)&=e_{k+1}\otimes e_r.
\end{align*}
This operator $\eta_b$ is a unitary transformation.
Furthermore,
\begin{align}\label{equation_etab_entrelacement}
\eta_b \pi(\U) \eta_b^*&=\alpha+(s+s^*)\otimes 1 \text{ and} \\
\eta_b \rho(\U) \eta_b^*&=1\otimes (\sh),
\end{align}
where $\alpha$ is the operator of $\lNN$ defined as follows:
for all $x\in \lN,$
\begin{align*}
\alpha(e_0\otimes x)&=(\sqrt{1-\dmm} -1)(e_1\otimes x)+\dmi (e_0\otimes (\sh)(x))\\
\alpha(e_1\otimes x)&=(\sqrt{1-\dmm} -1)(e_0\otimes x)\\
\alpha(e_k\otimes x)&=0 \text{ if } k\geqslant 2.
\end{align*}
\end{prop}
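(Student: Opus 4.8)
The plan is to reduce the statement to the orthonormal basis $E_b$ produced in Lemma \ref{lem_ortho_basis_AbA} together with one graphical multiplication rule. Unitarity of $\eta_b$ is then immediate: by that lemma the vectors $\dr b\bullet\cupbul r$ and $\dkr\cupbul k\bullet Z_b\bullet\cupbul r$ form an orthonormal basis of $\AbA$, and $\eta_b$ sends them bijectively onto the standard orthonormal basis $\{e_k\otimes e_r\}_{k,r\geqslant 0}$ of $\lNN$, so it extends to a unitary. It remains to check the two intertwining identities, and for this it suffices to evaluate $\pi(\U)$ and $\rho(\U)$ on each basis vector and rewrite the answer in $E_b$. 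Throughout I would write $\zeta_r=\dr b\bullet\cupbul r$ and $\xi_{k,r}=\dkr\cupbul k\bullet Z_b\bullet\cupbul r$ for the two families.

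The engine of the computation is the rule, read off the tangle defining $\star$, that since $\cup$ has only two strands there are at most three contractions, so for every $y$
$$\cup\star y=\cup\bullet y+y+c_L(y),\qquad y\star\cup=y\bullet\cup+y+c_R(y),$$
where the middle term is the single contraction of one leg of $\cup$ into $y$ (a mere relabelling, hence equal to $y$), and $c_L(y)$ (resp. $c_R(y)$) is obtained by capping the two leftmost (resp. rightmost) strands of $y$ through $\cup$, a closed loop contributing a factor $\de$. I would record the three facts that govern the boundary behaviour: $c_R(Z_b)=0$, $c_L(Z_b)=\sqrt{\de-\dmi}\,b$, and $\cup\bullet b=\sqrt{\de-\dmi}\,Z_b+\dmi\,b\bullet\cup$ (the definition of $Z_b$ rearranged). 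Passing from $\cup$ to $\U$ subtracts the identity and divides by $\deltafrac 1$, which cancels the middle term $+y$ everywhere; this is precisely what replaces the diagonal-bearing operator of $\cup$ by the purely off-diagonal $\sh$.

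For the right identity, right multiplication by $\cup$ only sees the trailing block $\cupbul r$, so on both families $(w\bullet\cupbul r)\star\cup=w\bullet\cupbul{r+1}+w\bullet\cupbul r+\de\,w\bullet\cupbul{r-1}$ with $w\in\{b,\ \cupbul k\bullet Z_b\}$ and the convention $\cupbul{-1}=0$. Normalising gives $\rho(\U)=1\otimes(\sh)$ on both $\zeta_r$ and $\xi_{k,r}$; the only point to check is the end $r=0$ of the $\xi$-family, where the missing $\cupbul{-1}$ term is exactly $c_R(Z_b)=0$, matching $s^*e_0=0$.

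The left identity is the substantial case and the expected main obstacle, because the box $b$ sitting at the far left breaks the left--right symmetry. When the leftmost block is a genuine cup, i.e. on $\xi_{k,r}$ with $k\geqslant 1$, the term $c_L$ simply deletes a cup with a factor $\de$ and normalising yields $(\sh)\otimes 1$; this is why $\alpha$ vanishes on $e_k\otimes x$ for $k\geqslant 2$. The corrections live only at the first two levels. On $\xi_{0,r}=\dr Z_b\bullet\cupbul r$ the cap $c_L$ produces $\sqrt{\de-\dmi}\,b\bullet\cupbul r$ rather than a cup, and after dividing by $\deltafrac 1$ this contributes the coefficient $\sqrt{1-\dmm}$ onto $\zeta_r$; on $\zeta_r=\dr b\bullet\cupbul r$ the concatenation term $\cup\bullet b\bullet\cupbul r$ is not itself a basis vector and must be resolved through $\cup\bullet b=\sqrt{\de-\dmi}\,Z_b+\dmi\,b\bullet\cup$, while $c_L$ contributes $b\bullet\cupbul{r-1}$; normalising then gives the coefficient $\sqrt{1-\dmm}$ onto $\xi_{0,r}$ together with the diagonal piece $\dmi\bigl(e_0\otimes(\sh)(e_r)\bigr)$. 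Comparing these three outputs with $(\sh)\otimes 1$, the discrepancy is supported on $e_0,e_1$ in the first leg and is exactly the operator $\alpha$. The delicate part is this last bookkeeping: keeping track of the non-vanishing $c_L(Z_b)$ and of the re-expansion of $\cup\bullet b$, and verifying that the normalisation constants assemble into $\sqrt{1-\dmm}$ and $\dmi$ as prescribed.
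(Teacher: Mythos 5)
Your proposal is correct and follows essentially the same route as the paper: unitarity is deduced from the orthonormal basis of Lemma \ref{lem_ortho_basis_AbA}, and the two intertwining identities are verified by evaluating $\pi(\U)$ and $\rho(\U)$ on each basis vector via the three-term expansion of multiplication by $\cup$ (concatenation, single contraction, double contraction), together with the facts $c_R(Z_b)=0$, $c_L(Z_b)=\sqrt{\de-\dmi}\,b$ and the rearranged definition of $Z_b$, exactly as in the paper's case-by-case computation. The only loose point is your blanket phrase that the double contraction yields ``a closed loop contributing a factor $\de$'' --- the loop, and hence the factor $\de$, occurs only when the two capped strands form a cup, not when one of them is the strand of $b$ --- but your individual case computations (no $\de$ on $c_L(b\bullet\cupbul r)$, a $\de$ on the right caps) apply the rule correctly, so the argument stands.
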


\begin{proof}
We have proved that $E_b$ is an orthonormal basis of the Hilbert space $\AbA$; thus, the operator $\eta_b$ sends an orthonormal basis to an other one.
Hence $\eta_b$ is a unitary transformation.

Let us show that $\eta_b$ satisfies the equality \ref{equation_etab_entrelacement}:
Let us look at the left action of cup.
Let us compute $\cup\star (b\bullet \cupbul r)$:\\
If $r=0$,
$$\includegraphics[width=8cm]{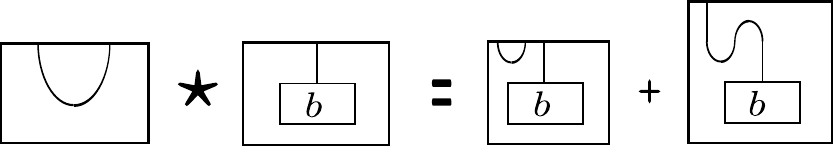}$$
If $r\geqslant 1$,
$$\includegraphics[width=10cm]{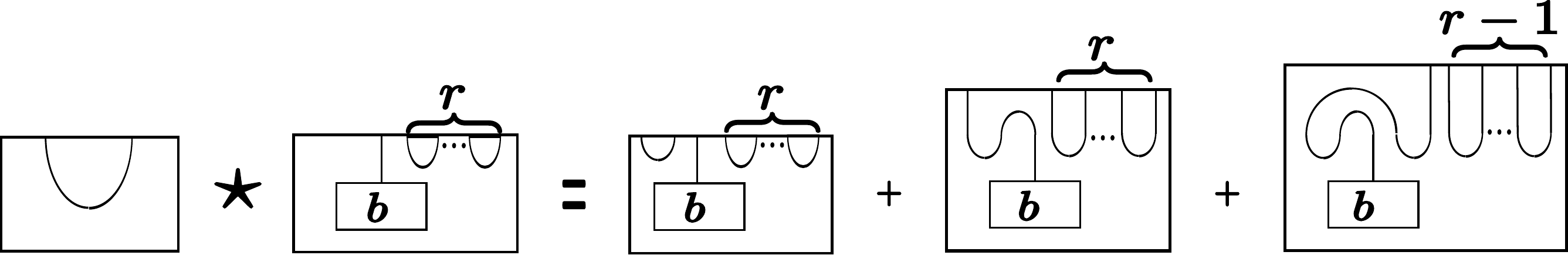}$$
Thus for any $r\geqslant 0$,
$$\cup\star (b\bullet \cupbul r)=\sqrt{\de-\dmi} (Z_b \bullet \cupbul r)+\dmi (b\bullet \cupbul{r+1})+(b\bullet \cupbul r)+(b\bullet \cupbul{r-1}).$$
So
$$(\U)\star (\dr b\bullet \cupbul r)=\sqrt{1-\dmm} (\dr Z_b\bullet \cupbul r)+\dmi (\de^{-\frac{r+1}{2}}b\bullet \cupbul{r+1})+\dmi (\de^{-\frac{r-1}{2}}b\bullet \cupbul{r-1}).$$
Let us compute $\cup\star (\cupbul k\bullet Z_b\bullet\cupbul r)$:\\
If $k=0$,
$$\includegraphics[width=12cm]{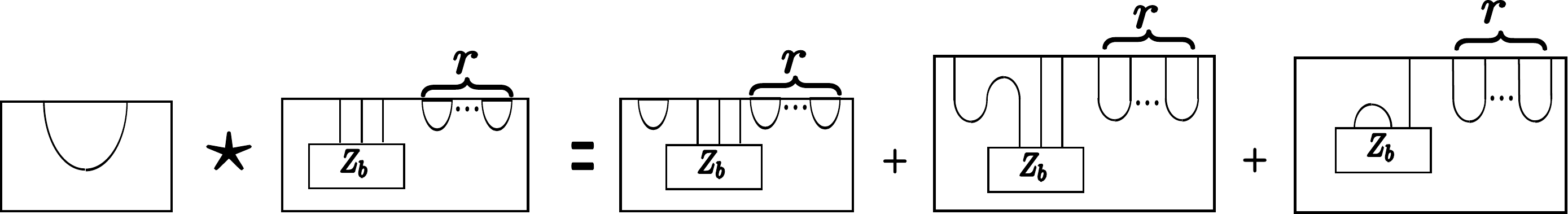}$$
and
$$\includegraphics[width=8cm]{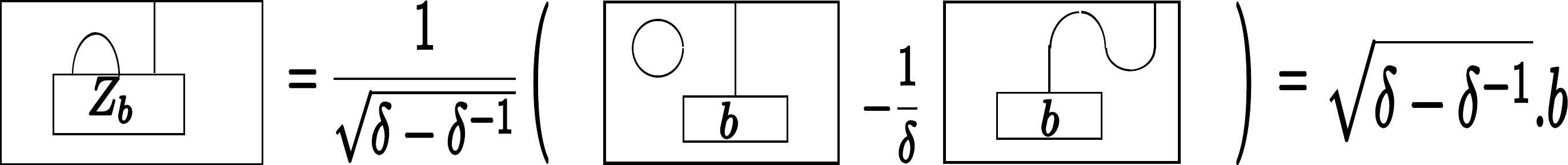}.$$
So,
$$(\U)\star (\dr Z_b\bullet \cupbul r)=\de^{-\frac{r+1}{2}}(\cup \bullet Z_b\bullet \cupbul r)+\sqrt{1-\dmm}(\dr b\bullet \cupbul r).$$
If $k\geqslant 1$,
$$\includegraphics[width=14cm]{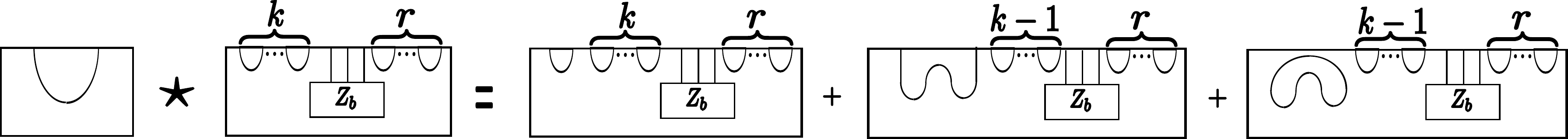}$$
Thus,
$$(\U)\star (\dkr \cupbul k\bullet Z_b\bullet \cupbul r)=(\de^{-\frac{k+1+r}{2}}\cupbul{k+1}\bullet Z_b\bullet \cupbul r)+(\de^{-\frac{k-1+r}{2}}\cupbul{k-1}\bullet Z_b\bullet \cupbul r),$$
where $k\geqslant 1$.

Let us look at the right action of cup:\\
Let us compute $(b\bullet \cupbul r)\star \cup$:\\
If $r=0$,
$$b\star \cup=(b\bullet \cup)+b.$$
If $r\geqslant 1$,
$$(b\bullet \cupbul r)\star \cup=(b\bullet \cupbul{r+1})+(b\bullet \cupbul r)+\de (b\bullet \cupbul{r-1}).$$
Let us compute $(\cupbul k\bullet Z_b\bullet \cupbul r)\star \cup$:\\
If $r=0$,
$$\includegraphics[width=12cm]{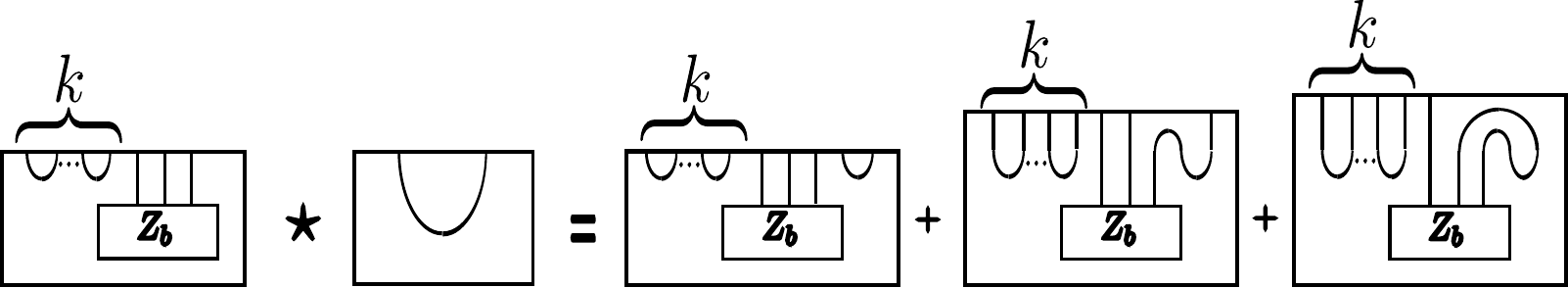}.$$
We have seen in the proof of lemma \ref{lem_ortho_basis_AbA} that $Z_b$ vanishes if it is capped off on the right.
Thus,
$$(\cupbul k \bullet Z_b)\star \cup=(\cupbul k\bullet Z_b\bullet\cup)+(\cupbul k\bullet Z_b).$$
If $r\geqslant 1$,
$$(\cupbul k\bullet Z_b\bullet \cupbul r)\star \cup=(\cupbul k\bullet Z_b\bullet \cupbul{r+1})+(\cupbul k\bullet Z_b\bullet\cupbul r)+\de (\cupbul k\bullet Z_b\bullet\cupbul{r-1})$$
Thus for any $k\geqslant 0,\,r\geqslant 0,$
\begin{align*}
(\dkr \cupbul k\bullet Z_b\bullet\cupbul r)\star (\U) & =\de^{-\frac{k+r+1}{2}}\cupbul{k}\bullet Z_b\bullet \cupbul{r+1}\\
& +\de^{-\frac{k+r-1}{2}}\cupbul{k}\bullet Z_b\bullet \cupbul{r-1}.
\end{align*}
\end{proof}

We want to show that the bimodules $\AbA$ is isomorphic to the coarse correspondence.
By proposition \ref{prop_decomposition_bimodule_b}, this is equivalent to saying that there exists a unitary $u$ of the Hilbert space $\lNN$ such that
\begin{align*}
u(\alpha+(\sh)\otimes 1)u^* & =(\sh)\otimes 1 \,\,\text{ and}\\
u(1\otimes (\sh))u^* & =1\otimes (\sh).
\end{align*}
So we are looking for a unitary $u$ that conjugates $\alpha+(\sh)\otimes 1$ and $(\sh)\otimes 1$, and that commutes with $1\otimes (\sh)$.
Let us consider the abelian von Neumann algebra
$$D=\{1\otimes (\sh)\}''\subset \mathcal B(\lNN)$$
generated by the operator $1\otimes (\sh)$.
It is equal to the \vna\ $D=\C.1\otimes D_0$, where $D_0=\{\sh\}''\subset \mathcal B(\lN)$.
Let us look at the distribution of the operator $\sh$.
\begin{prop}\label{prop_GJSW_semicircular_distribution_P_n}
The operator
$$\frac{\sh}{2}$$
is semicircular in the sense of Voiculescu \cite{Voiculescu_dykema_nica_Free_random_variables}.
In particular, the distribution of $\sh$ is absolutely continuous with respect to the Lebesgue measure, is supported in $\2$ and equal to:
$$d\nu(t)=\frac{\sqrt{4-t^2}}{2\pi}dt.$$
We have a unitary transformation $\eta_\nu:L^2([-2;2],\nu)\longrightarrow\lN$ defined on the dense subspace of continuous functions $\co$ by $f\in\co\longmapsto f(\sh)(e_0).$
This satisfies
\begin{equation}\label{equa_GJSW_etanu_sh}\eta_\nu (\sh)\eta_\nu^*(f)(t)=t f(t)\end{equation}
for any $f\in\co$ and $t\in\2$.
Furthermore, $\eta_\nu(P_n)=e_n,$ for all $n\geqslant 0$ where $\{P_n\}_n$ is the family of polynomials defined as follows:
$$\begin{array}{l}
P_0(X)=1\\
P_1(X)=X\\
P_n(X)=XP_{n-1}(X)-P_{n-2}(X),\,\,\text{for any}\,\, n\geqslant 2\\
\end{array},$$
where $X$ is an indeterminate.
\end{prop}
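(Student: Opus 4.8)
The plan is to identify $s$ with the unilateral shift on $\lN$, so that $\sh$ is the Jacobi matrix with zero diagonal and all off-diagonal entries equal to $1$; equivalently $\sh=\ell(h)+\ell(h)^*$, where $\ell(h)$ is the creation operator attached to a unit vector $h$ on the full Fock space of $\C h$, under the identification of $h^{\otimes n}$ with $e_n$. With this picture the semicircularity of $\frac{\sh}{2}$ and the explicit law of $\sh$ are exactly Voiculescu's full-Fock-space computation \cite{Voiculescu_dykema_nica_Free_random_variables}, so I would simply quote it. If a self-contained argument is preferred, the same conclusion follows by moments: since $s^*e_0=0$, the number $\langle\sh^n e_0,e_0\rangle$ counts the $\pm1$ lattice paths of length $n$ from $0$ to $0$ that remain in $\N$, which is the Catalan number for $n$ even and $0$ for $n$ odd; these are the moments of the measure $\nu$ with the stated density on $\2$, and the density itself is recovered by Stieltjes inversion after solving the fixed-point equation $G(z)=(z-G(z))^{-1}$ for the Cauchy transform of $\sh$.

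Next I would take $\nu$ to be this distribution and construct $\eta_\nu$ by functional calculus. For $f\in\co$ one has $\Vert f(\sh)e_0\Vert_{\lN}^2=\langle |f|^2(\sh)e_0,e_0\rangle=\int_\2 |f|^2\,d\nu=\Vert f\Vert_{\Ln}^2$, so $f\mapsto f(\sh)e_0$ is isometric and extends to an isometry $\eta_\nu:\Ln\to\lN$. To see $\eta_\nu$ is onto I would check that $e_0$ is cyclic for $\sh$: the coefficient of $e_n$ in $\sh^n e_0$ equals $1$ while every other basis vector occurring has index strictly smaller than $n$, so an induction on $n$ shows that $e_0,\dots,e_n$ all lie in the span of $e_0,\sh e_0,\dots,\sh^n e_0$; hence the polynomials in $\sh$ applied to $e_0$ are dense and $\eta_\nu$ is unitary. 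The intertwining relation \eqref{equa_GJSW_etanu_sh} is then immediate from the functional calculus, since $\sh\,f(\sh)e_0=(tf)(\sh)e_0$ for every $f\in\co$; that is, $\eta_\nu$ carries $\sh$ to multiplication by the coordinate $t$.

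Finally I would prove $\eta_\nu(P_n)=e_n$, equivalently $P_n(\sh)e_0=e_n$, by induction on $n$. The base cases are $P_0(\sh)e_0=e_0$ and $P_1(\sh)e_0=\sh\,e_0=e_1$, the latter using $s^*e_0=0$. For $n\geqslant 2$, combining $\sh\,e_{n-1}=e_n+e_{n-2}$ with the recurrence $P_n=XP_{n-1}-P_{n-2}$ and the inductive hypothesis gives
$$P_n(\sh)e_0=\sh\,P_{n-1}(\sh)e_0-P_{n-2}(\sh)e_0=\sh\,e_{n-1}-e_{n-2}=(e_n+e_{n-2})-e_{n-2}=e_n.$$
In particular $\{P_n\}$ is carried by $\eta_\nu$ onto the orthonormal basis $\{e_n\}$, which re-proves that the $P_n$ are the orthonormal polynomials of $\nu$.

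Every step after the identification of $s$ is routine Hilbert-space bookkeeping, so the only genuine input is the explicit shape of $\nu$. The cleanest route makes this a citation to Voiculescu, leaving the cyclicity of $e_0$ and the inductive polynomial identification as the actual content. If one wants to avoid the citation, the mild obstacle is the Stieltjes-inversion step that turns the Catalan moments (or the continued-fraction expansion of the resolvent of the Jacobi matrix $\sh$) into the density $\frac{\sqrt{4-t^2}}{2\pi}$; this is standard but is the one place where a short analytic computation is unavoidable.
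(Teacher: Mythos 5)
Your proposal is correct and follows essentially the same route as the paper: quote Voiculescu for the semicircular law of $\sh$, obtain the intertwining relation directly from the definition of $\eta_\nu$ by functional calculus, and prove $P_n(\sh)e_0=e_n$ by induction, which simultaneously yields surjectivity (and hence unitarity) of $\eta_\nu$. Your extra remarks (the Catalan-moment/Stieltjes-inversion alternative and the separate cyclicity argument for $e_0$) are fine but not needed beyond what the paper does.
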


\begin{proof}
To show that the distribution of $\sh$ is $d\nu(t)=\frac{ \sqrt{4-t^2} }{ 2\pi }dt,$ see \cite{Voiculescu_dykema_nica_Free_random_variables}[Example 3.4.2].
The equality \ref{equa_GJSW_etanu_sh} is obvious by definition of $\eta_\nu$.

We prove that $\eta_\nu(P_n)=e_n$ by induction on $n$.

It is clear for $n=0$.\\
For $n=1$: $(\sh)(e_0)=e_1$, so it is true for $n=1$.\\
Let $n\geqslant 2$ and suppose the result true for $n-1$ and $n-2$.
We have:
\begin{align*}
P_n(\sh)(e_0)&=(\sh)P_{n-1}(\sh)(e_0)-P_{n-2}(\sh)(e_0)\\
&=(\sh)(e_{n-1})-e_{n-2}=e_n+e_{n-2}-e_{n-2}\\
&=e_n.
\end{align*}
By definition, $\eta_\nu$ is an isometry and we just proved that the orthonormal basis $\{e_n,\,n\geqslant 0\}$ is in the image of $\eta_\nu$.
Hence, $\eta_\nu$ is surjective; thus, it is a unitary transformation.
\end{proof}

The Hilbert space $\lN\otimes \Ln$ is the constant field of the Hilbert space $\lN$ over the probability space $(\2,\nu)$.
We identify it with the Hilbert space of measurable functions $\xi:\2\longrightarrow \lN$ that are square-integrable.
We denote such a vector of $\lN\otimes \Ln$ by the direct integral
$$\xi=\int_\2^\oplus \xi_t d\nu(t),$$
where $\xi_t\in\lN$.
A bounded measurable operator field $\{b_t,\,t\in \2\}$  defined a decomposable operator that we denote by
$$b=\int^\oplus_\2 b_td\nu(t).$$
It acts in the following way:
$$b(\xi)=\int^\oplus_\2 b_t(\xi_t)d\nu(t).$$
We recall that the vector space of decomposable operators is a \vna\ equal to the commutant of the diagonal algebra in $\Bo(\lN\otimes \Ln)$.

The unitary transformation
$$1\otimes \eta_\nu:\lN\otimes L^2(\2,\nu)\longrightarrow \lNN$$
conjugates the von Neumann algebra $D$ and the diagonal algebra associates to this decomposition.
The two operators $\pi(\U)$ and $\rho(\U)$ commute, this implies that the operator
$$c:=(1\otimes \eta_\nu)(\alpha+(\sh)\otimes 1)(1\otimes \eta_\nu)^*$$
commutes with the diagonal algebra, hence it is a decomposable operator.
We give in the next proposition an explicit decomposition of the operator $c$.

\begin{prop}\label{prop_GJSW_c_t}
The operator $c$ is equal to the direct integral
$$c=\int_\2^\oplus c_t d\nu(t),$$
 where $c_t$ acts on the Hilbert space $\lN$ and is equal to:
\begin{small}$$c_t=\left( \begin{array}{ccccccc}
\frac{t}{\delta}&\sqrt{1-\delta^{-2}}&0&0&0&0&...\\
\sqrt{1-\delta^{-2}}&0&1&0&0&0&...\\
0&1&0&1&0&0&...\\
0&0&1&0&1&0&...\\
0&0&0&1&0&1&...\\
...&...&...&...&...&...&...
\end{array}\right)$$\end{small} in the orthogonal standard basis of the \Hs\ $\lN$.
\end{prop}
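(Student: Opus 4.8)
The plan is to compute the action of $\alpha+(\sh)\otimes 1$ directly on the standard basis, conjugate by $1\otimes\eta_\nu$, and read off the fibre $c_t$ from the resulting formula. Decomposability of $c$ has already been established (since $\pi(\U)$ and $\rho(\U)$ commute, $c$ lies in the commutant of the diagonal algebra), so the only task is to exhibit the explicit fibre operator.

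First I would record how $\alpha+(\sh)\otimes 1$ acts on the basis vectors $e_k\otimes x$ of $\lNN$, using the formula for $\alpha$ from Proposition \ref{prop_decomposition_bimodule_b} together with the fact that $\sh$ is the Jacobi operator on $\lN$, i.e. $s(e_k)=e_{k+1}$ and $s^*(e_k)=e_{k-1}$ with $s^*(e_0)=0$. For $k\geqslant 2$ the operator $\alpha$ vanishes, so one obtains simply $(e_{k-1}+e_{k+1})\otimes x$. The only interesting cases are $k=0,1$, where the terms $\sqrt{1-\dmm}-1$ produced by $\alpha$ recombine with the $\pm 1$ coming from $(\sh)\otimes 1$: on $e_0\otimes x$ the coefficient of $e_1\otimes x$ becomes $(\sqrt{1-\dmm}-1)+1=\sqrt{1-\dmm}$, with the extra summand $\dmi(e_0\otimes(\sh)x)$; on $e_1\otimes x$ the coefficient of $e_0\otimes x$ is likewise $\sqrt{1-\dmm}$, together with the untouched term $e_2\otimes x$.

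Next I would conjugate by $1\otimes\eta_\nu$. By equation \ref{equa_GJSW_etanu_sh}, $\eta_\nu$ intertwines $\sh$ on the second factor with multiplication by the variable $t$ on $\Ln$. Hence the only surviving dependence on the fibre variable is the summand $\dmi(e_0\otimes(\sh)x)$, which transforms into multiplication by $\dmi t$ acting in the $e_0$-slot; it therefore contributes the single matrix entry $t/\delta$ in position $(0,0)$, while every other coefficient found in the previous step is a constant independent of $t$.

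Finally, reading off the fibre operator, $c_t$ sends $e_0\mapsto \frac{t}{\delta}e_0+\sqrt{1-\dmm}\,e_1$, $e_1\mapsto\sqrt{1-\dmm}\,e_0+e_2$, and $e_k\mapsto e_{k-1}+e_{k+1}$ for $k\geqslant 2$, which is exactly the stated Jacobi-type matrix in the standard basis of $\lN$. The computation is routine; the one point requiring care is verifying that the $\alpha$-corrections at $k=0,1$ recombine cleanly into the coefficient $\sqrt{1-\dmm}$, and that the factor $\dmi$ multiplying $\sh$ yields precisely the scalar entry $t/\delta$ rather than a genuine multiplication operator coupling the two tensor legs.
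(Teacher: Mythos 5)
Your proposal is correct and follows essentially the same route as the paper: both arguments reduce to computing $(\alpha+(\sh)\otimes 1)(e_k\otimes x)$ for $k=0,1$ and $k\geqslant 2$, transporting the second leg through $1\otimes\eta_\nu$ via equation \ref{equa_GJSW_etanu_sh}, and matching the result against the Jacobi matrix on the dense subspace $\text{span}\{e_n\}\otimes\co$. The only organizational difference is that the paper first verifies explicitly that $\{c_t\}_{t\in\2}$ is a bounded measurable operator field (via norm-continuity of $t\mapsto c_t$ on the compact interval) so that $\int_\2^\oplus c_t\,d\nu(t)$ is a well-defined decomposable operator to compare with $c$, whereas you invoke the a.e.-uniqueness of the fibres of the already-decomposable $c$; both are legitimate and the verification you omit is routine.
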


\begin{proof}
Let us show that $\{c_t,\, t\in \2\}$ is a bounded measurable operator field.
For this we have to show that $t\mapsto c_t$ is measurable for the Borel $\sigma$-algebra generated by the operator strong topology on $\mathcal B(\lN)$ and that the set $\{\Vert c_t\Vert,\,t\in Y\}$ is bounded in $\mathbb R.$

Let $t\in \2$, the operator $c_t$ is a finite rank perturbation of $\sh$, so it is a bounded operator.
Consider the function from the compact interval $\2$ to the space of bounded linear operators $\mathcal B(\lN)$:
\begin{align*}
\2&\longrightarrow \mathcal B(\lN)\\
t&\longmapsto c_t.
\end{align*}
This function is clearly continuous for the norm topology on $\mathcal B(\lN)$.
This implies that its image is compact and then bounded in $\mathcal B(\lN)$.
This function is measurable for the Borel $\sigma$-algebra generated by the norm topology on $\mathcal B(\lN)$ and then is measurable for the $\sigma$-algebra generated by the operator strong topology.
This implies that the family $\{c_t,\,t\in\2\}$ is a bounded measurable field of operators.

Consider the decomposable operator
$$d=\int_\2^\oplus c_t d\nu(t)$$
acting on the Hilbert space $\lN\otimes \Ln$.
Let us show that the two operators $c$ and $d$ are equal, i.e.
$$c=\int_\2^\oplus c_t d\nu(t).$$
Let $\text{span}\{e_n,\,n\geqslant 0\}$ be the subspace of $\lN$ spanned by the standard basis and let $\co\subset L^2(\2,\nu)$ be the subspace of continuous functions.
The algebraic tensor product of those vectors spaces,
$$\text{span}\{e_n,\,n\geqslant 0\}\otimes\co,$$
is a dense subspace of $\lN\otimes \Ln$.
Let us show that $c$ and $d$ coincide on this subspace.

Consider the operator
$$c-(\sh)\otimes 1=(1\otimes \eta_\nu)\circ\alpha\circ(1\otimes \eta_\nu^*)\in\Bo(\lN\otimes\Ln)$$
and a vector $f\in \co$.
Let us compute the vector $(c-(\sh)\otimes 1)(e_k\otimes f):$\\
For $k=0$,
\begin{align*}
(c-(\sh)\otimes 1)(e_0\otimes f)&=(1\otimes \eta_\nu)(\alpha(e_0\otimes \eta_\nu^*(f))) \\
&=(1\otimes \eta_\nu)[(\sqrt{1-\delta^{-2}}-1)(e_1\otimes\eta_\nu^*(f))\\
&+\dmi e_0\otimes ((\sh)(\eta_\nu^*(f)))]\\
&=(\sqrt{1-\delta^{-2}}-1)(e_1\otimes f)+\dmi (e_0\otimes (\eta_\nu(\sh)\eta_\nu^*)(f)).
\end{align*}
For $k=1$,
$$(c-(\sh)\otimes 1)(e_1\otimes f)=(\sqrt{1-\delta^{-2}}-1)(e_0\otimes f).$$
For any $k\geqslant 2$,
$$(c-(\sh)\otimes 1)(e_k\otimes f)=0.$$
On the other hand let us compute
$$(d-(\sh)\otimes 1) (e_k\otimes f):$$
For $k=0$,
\begin{align*}
(d-(\sh)\otimes 1) (e_0\otimes f)&=\left(\int_\2^\oplus (c_t-s-s^*) d\nu(t)\right)\left(\int_\2^\oplus f(t)e_0 d\nu(t)\right)\\
&=\int_\2^\oplus f(t)(c_t-s-s^*)(e_0) d\nu(t)\\
&=\int_\2^\oplus( f(t)\frac{t}{\de} e_0+f(t)(\sqrt{1-\dmm} -1)e_1) d\nu(t)\\
&=\dmi (e_0\otimes \eta_\nu (\sh)\eta_\nu^*(f))+(\sqrt{1-\dmm} -1)(e_1\otimes f)\\
&=(c-(\sh)\otimes 1)(e_0\otimes f).
\end{align*}
For $k=1$,
\begin{align*}
(d-(\sh)\otimes 1)(e_1\otimes f)&=\int_\2^\oplus f(t)(c_t-s-s^*)(e_1) d\nu(t)\\
&=\int_\2^\oplus( f(t)(\sqrt{1-\dmm} -1)e_0) d\nu(t)\\
&=(\sqrt{1-\dmm}-1)(e_0\otimes f)\\
&=(c-(\sh)\otimes 1)(e_1\otimes f).
\end{align*}
Let $k\geqslant 2,$
\begin{align*}
(d-(\sh)\otimes 1)(e_k\otimes f)&=\int_\2^\oplus f(t)(c_t-s-s^*)(e_k) d\nu(t)\\
&=\int_\2^\oplus(0) d\nu(t)=0\\
&=(c-(\sh)\otimes 1)(e_k\otimes f).
\end{align*}
The two operators $$c \,\,\,\,\text{and}\,\,\,\, \int_\2^\oplus c_t d\nu(t)$$ coincide on the dense subspace
$$\text{span}\{e_n,\,n\geqslant 0\})\otimes\co$$
of $\lN\otimes L^2(\2,\nu)$; thus, they are equal.
\end{proof}

Let us show that $c_t$ is unitarily equivalent to $\sh$:
Before proving it, let us recall some definitions and basic facts on spectral theory.
\begin{defi}
Let $\h$ be a \Hs\ with a inner product $\langle\cdot,\cdot\rangle$ and $a\in\Bo(\h)$ a self-adjoint operator acting on $\h$.
We denote the spectrum of $a$ by $\sigma(a)$.
The essential spectrum of $a$ is the complement of the set of isolated eigenvalues of finite multiplicity in $\sigma(a)$.
We denote it by $\sigma_{ess}(a)$.

Suppose that $a$ is a self-adjoint operator.
For any vector $\xi\in\h$, one can associate a Radon measure $\mu_\xi$ on the spectrum of $a$.
Consider 
$\mu_\xi(f)=\langle f(a)\xi,\xi\rangle$ for any continuous functions on $\sigma(a)$, $f\in\mathcal C(\sigma(a))$.
Let $\h_{ac}$ be the \Hs\ of vectors $\xi\in\h$ such that the measure $\mu_\xi$ is absolutely continuous \wrt\ the Lebesgue measure.
Let $\h_{sc}$ be the \Hs\ of vectors $\xi\in\h$ such that the measure $\mu_\xi$ is singular \wrt\ the Lebesgue measure.
Let $\h_{pp}$ be the \Hs\ of vectors $\xi\in\h$ such that the measure $\mu_\xi$ is a pure point measure.
We have that $\h=\h_{ac}+\h_{sc}+\h_{pp}$ but the sum is not direct in general.
We define the absolutely continuous spectrum of $a$ by the spectrum of the operator $a$ restricted to the \Hs\ $\h_{ac}$.
We denote it by $\sigma_{ac}(a)$.
We define the singular spectrum and the pure point spectrum in the similar way.
We denote them by $\sigma_{sc}(a)$ and by $\sigma_{pp}(a)$.

The operator $a\in\Bh$ is said to have uniform multiplicity equal to $1$ if there exists a measure $\mu$ on the spectrum of $a$ and a unitary transformation
$w:\h\longrightarrow L^2(\sigma(a),\mu)$
such that for any function $f\in L^2(\sigma(a),\mu)$, $waw^*(f)(z)=zf(z)$, where $z\in\sigma(a)$.
In that case, the spectrums $\sigma_{ac}(a)$, $\sigma_{sc}(a)$ and $\sigma_{pp}(a)$ form a partition of the spectrum of $a$.
See \cite{Kato_perturbation_theory_lin_ope} for more details.
\end{defi}

\begin{lemm}\label{lem_c_g_equivalent_a_sh}
Let $t\in\2$, then $c_t$ is unitarily equivalent to $\sh$.
\end{lemm}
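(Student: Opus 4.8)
The plan is to determine the spectral type of $c_t$ and match it with that of $\sh$. By Proposition \ref{prop_GJSW_semicircular_distribution_P_n} the vector $e_0$ is cyclic for $\sh$, its scalar spectral measure is $\nu$, and $\eta_\nu$ realizes $\sh$ as multiplication by $t$ on $\Ln$; thus $\sh$ has uniform multiplicity $1$ and is purely absolutely continuous with spectrum $\2$. Since $c_t$ is a tridiagonal (Jacobi) operator whose off-diagonal entries are all nonzero, the orbit $\{c_t^n e_0\}$ spans the same space as $\{e_n\}$, so $e_0$ is cyclic for $c_t$ as well and $c_t$ also has uniform multiplicity $1$. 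For operators of uniform multiplicity $1$, the unitary equivalence class is determined by the measure class of the scalar spectral measure of a cyclic vector: $c_t$ is unitarily equivalent to multiplication by $x$ on $L^2(\sigma(c_t),\mu)$ with $\mu=\mu_{e_0}$, and this is unitarily equivalent to multiplication by $t$ on $\Ln\cong\lN$ \ifff\ $\mu$ is equivalent to $\nu$. Hence it suffices to prove that $\mu$ is purely absolutely continuous with the same measure class as $\nu$, i.e. $\sigma_{ac}(c_t)=\2$ with the measure class of $\nu$ and $\sigma_{sc}(c_t)=\sigma_{pp}(c_t)=\emptyset$.

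First I would compute the Weyl function $m(E)=\langle (c_t-E)^{-1}e_0,e_0\rangle=\int\frac{d\mu(x)}{x-E}$ for $E$ in the upper half-plane. Deleting the $0$-th row and column of $c_t$ leaves exactly the operator $\sh$, so the coefficient-stripping formula for Jacobi matrices gives
$$m(E)=\frac{1}{\frac{t}{\de}-E-(1-\dmm)m_0(E)},$$
where $m_0(E)=\int\frac{d\nu(x)}{x-E}=\frac{-E+\sqrt{E^2-4}}{2}$ is the Weyl function of $\sh$, read off from the semicircular distribution of Proposition \ref{prop_GJSW_semicircular_distribution_P_n}. Taking boundary values for $x\in(-2,2)$, where $m_0(x+i0)=\frac{-x+i\sqrt{4-x^2}}{2}$, the denominator $D(x):=\frac{t}{\de}-x-(1-\dmm)m_0(x+i0)$ has imaginary part $-\frac{(1-\dmm)\sqrt{4-x^2}}{2}\neq 0$, so $m(x+i0)$ is finite and continuous with $\text{Im}\,m(x+i0)=\frac{(1-\dmm)\sqrt{4-x^2}}{2\,|D(x)|^2}>0$. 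By the theory of boundary values of Herglotz transforms (see \cite{Kato_perturbation_theory_lin_ope}), this forces $\mu$ to be purely absolutely continuous on $(-2,2)$ with strictly positive density, hence equivalent to $\nu$ there; the endpoints $\pm2$ carry no $\nu$-mass and are harmless.

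It then remains to show that $\mu$ has no mass outside $(-2,2)$, which simultaneously yields $\sigma_{pp}(c_t)=\emptyset$, $\sigma(c_t)=\2$, and (with the previous paragraph) $\sigma_{sc}(c_t)=\emptyset$. For this I would solve the eigenvalue recursion $c_t\xi=\lambda\xi$ directly: for indices $n\geqslant 2$ it is the Chebyshev recursion $\xi_{n+1}=\lambda\xi_n-\xi_{n-1}$, whose only $\ell^2$ solutions for $|\lambda|>2$ are multiples of $\xi_n=z^n$ with $\lambda=z+z^{-1}$ and $|z|<1$. Substituting this into the two boundary relations coming from rows $n=0,1$ of $c_t$ forces $z^2-t\de z+\de^2=0$; but for $t\in\2$ this quadratic has complex conjugate roots of modulus $|z|=\de>1$ (product of roots $\de^2$, discriminant $\de^2(t^2-4)\leqslant 0$), contradicting $|z|<1$. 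A direct check with the linear solutions of the recursion excludes $\lambda=\pm2$ as well. Hence $c_t$ has no eigenvalues, $\mu$ is purely absolutely continuous and equivalent to $\nu$, and therefore $c_t$ is unitarily equivalent to $\sh$.

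The main obstacle is the exclusion of a singular continuous part: a finite-rank perturbation of $\sh$ can a priori create singular continuous spectrum. A soft argument reduces part of the work --- Weyl's theorem gives $\sigma_{ess}(c_t)=\sigma_{ess}(\sh)=\2$, and the Kato--Rosenblum theorem, applicable since $c_t-\sh$ is finite rank and hence trace class, identifies the absolutely continuous parts of $c_t$ and $\sh$ --- but it is precisely the quantitative boundary-value computation of $m$ above that removes the singular continuous spectrum and completes the identification of measure classes.
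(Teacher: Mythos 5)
Your proof is correct, and it takes a genuinely different route at the decisive step. Both arguments share two ingredients: the cyclicity of $e_0$ for $c_t$ (the paper exhibits the orthogonal polynomials $S_{n,t}$ explicitly, you argue from the nonvanishing off-diagonal entries of the Jacobi matrix --- same content), and the exclusion of eigenvalues with $|\lambda|>2$ by solving the same three-term recursion. The divergence is in how the singular spectrum is killed. The paper uses Weyl's theorem to get $\sigma_{ess}(c_t)=\2$, Kato--Rosenblum to identify the absolutely continuous part of $c_t$ with that of $\sh$, and then concludes $\sigma_{sc}(c_t)=\sigma_{pp}(c_t)=\emptyset$ from the assertion that for a multiplicity-one operator the sets $\sigma_{ac},\sigma_{sc},\sigma_{pp}$ partition the spectrum. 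You instead compute the Weyl function $m$ of $c_t$ by coefficient stripping --- valid here since deleting the first row and column of $c_t$ leaves exactly the free Jacobi matrix $\sh$ --- and read off from its boundary values that the spectral measure of $e_0$ is purely absolutely continuous on $(-2,2)$ with strictly positive density, hence equivalent to $\nu$; the multiplicity-one classification then gives the unitary equivalence without invoking Kato--Rosenblum at all. Your route is more quantitative and arguably more watertight: knowing only that $\sigma_{ac}(c_t)=\sigma(c_t)=\2$ does not by itself rule out a singular part supported inside $\2$, since the closed supports of the ac, sc and pp pieces of a measure need not be disjoint, and it is precisely the boundary-value computation of $m$ that excludes this, as you note yourself. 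The only cosmetic blemishes are the reuse of the letter $t$ for both the perturbation parameter and the spectral variable, and the phrase ``complex conjugate roots'' at $t=\pm 2$, where the roots of $z^2-t\de z+\de^2=0$ are real and equal to $\pm\de$ --- but they still have modulus $\de>1$, so the conclusion stands.
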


\begin{proof}
Let us fix $t\in\2$, and consider the operator $c_t$.
Let us show that the spectrum of the operator $c_t$, $\sigma(c_t)$, is equal to $\2$:

The operator $k_t:=c_t-(\sh)$ is of finite rank.
The theorem of Weyl-von Neumann, see \cite{Kato_perturbation_theory_lin_ope}[p.523], shows that the essential spectrum of an operator acting on a Hilbert space is invariant under compact perturbation.
So, the essential spectrum of $c_t$, $\sigma_{\text{ess}}(c_t)$, is equal to the essential spectrum of $\sh$, $\sigma_{\text{ess}}(\sh)$.
The operator $\sh$ is semicircular, its spectrum is essential and equal to $\2$; thus, $\sigma_{\text{ess}}(c_t)=\2$.
The complement of the essential spectrum inside the spectrum is equal to the set of isolated eigenvalues of finite multiplicity.
It is called the discrete spectrum.
Let us show that this complement is empty.
The operator $c_t$ is self-adjoint, so its spectrum is included in the real line $\mathbb R$.
If we show that $c_t$ does not have any real eigenvalue with module strictly bigger than 2, we will have shown that the discrete spectrum is empty.
Consider a real number $z>2$ and $x=(x_n)_n\in\lN$ such that $c_t(x)=zx$.
For any $n\geqslant 1$, $x_n+x_{n+2}=zx_{n+1}$ the roots of the characteristic polynomials of this equation are
$$r=\frac{z-\sqrt{z^2-4}}{2}$$
and
$$l=\frac{z+\sqrt{z^2-4}}{2}.$$
Hence, there exists two complex numbers $B,C\in\C$, such that $x_n=Br^n+Cl^n$ for any $n\geqslant 1$.
We notice that $\vert l\vert >1$, and $x$ is a square summable complex sequence; thus, $C=0$.
Hence, for any $n\geqslant 1$, $x_n=x_1r^{n-1}$.
The equality $c_t(x)=zx$ gives us the system of equations:
$$\begin{array}{c}
 \frac{t}{\de}x_0+\sqrt{1-\dmm} x_1=zx_0 \\
\sqrt{1-\dmm} x_0+x_2=zx_1
\end{array}.$$
This implies that
\begin{equation*}
    [(1-\dmm)-(\frac{z-\sqrt{z^2-4}}{2}-z)(\frac{t}{\de}-z)]x_1=0,
\end{equation*}
which means that $[(1-\dmm)-h(z)]x_1=0$ where
$$h(z)=(z-\frac{t}{\de})\frac{z+\sqrt{z^2-4}}{2}.$$
Since the function $h$ is strictly increasing on $\R_+$ and $h(2)\geqslant 2(1-\dmi)$, we have
$$h(z)-(1-\dmm)>(\dmi-1)^2>0.$$
Hence $x_1=0$, that implies that for any $n\geqslant 1$, $x_n=0$ and also that $x_0=0$.
For the case where $z<-2$ it is the same proof where we replace $z$ by $-z$.
Hence, the spectrum of $c_t$, $\sigma(c_t)=\sigma_{\text{ess}}(c_t)=\2$.\\

Let us show that $c_t$ is of uniform multiplicity equal to $1$.
To do this, we show that the vector $e_0$ of the Hilbert space $\lN$ is cyclic for the abelian von Neumann algebra generated by the self-adjoint operator $c_t$ in $\mathcal B(\lN)$.
Consider the family of polynomials $\{S_{n,t},\,n\geqslant 0\}$ defined as follows:
\begin{align*}
S_{0,t}(X)&=1\\
S_{1,t}(X)&=\dfrac{X-\frac{t}{\delta}}{\sqrt{1-\delta^{-2}}}\\
S_{2,t}(X)&=XS_{1,t}(X)-\sqrt{1-\delta^{-2}}\\
S_{n,t}(X)&=XS_{n-1,t}(X)-S_{n-2,t}(X),\,\,\text{for any} \,\,n\geqslant 3,
\end{align*}
where $X$ is an indeterminate.\\
Let us show that for any $n\geqslant 0$, $S_{n,t}(c_t)(e_0)=e_n$.
We proceed by induction:\\
For $n=0$ it is trivial.\\
For $n=1$:
\begin{align*}
S_{1,t}(c_t)(e_0)&=\frac{1}{\sqrt{1-\delta^{-2}}} (c_t-\frac{t}{\delta} .1)(e_0)\\
&=\frac{1}{\sqrt{1-\delta^{-2}}} (\frac{t}{\delta} e_0+\sqrt{1-\delta^{-2}} e_1-\frac{t}{\delta} e_0)\\
&=e_1.
\end{align*}
For $n=2$:
\begin{align*}
S_{2,t}(c_t)(e_0)&=c_t S_{1,t}(c_t)(e_0)-\sqrt{1-\delta^{-2}} e_0=c_t(e_1)-\sqrt{1-\delta^{-2}} e_0\\
&=\sqrt{1-\delta^{-2}} e_0+e_2-\sqrt{1-\delta^{-2}} e_0\\
&=e_2.
\end{align*}
Let $n\geqslant 3$, we suppose the result is true for $n-1$ and $n-2$:
\begin{align*}
S_{n,t}(c_t)(e_0)&=c_t S_{n-1,t}(c_t)(e_0)-S_{n-2,t}(c_t)(e_0)\\
&=c_t(e_{n-1})-e_{n-2}.
\end{align*}
We know that for any $k\geqslant 2$, $c_t(e_k)=e_{k-1}+e_{k+1}$, so  $S_{n,t}(c_t)(e_0)=e_n$.\\
This show that the vector $e_0$ is a cyclic vector for the von Neumann algebra generated by $c_t$.

Consider the Radon measure $\nu^t$ defined as follows:
$$\int_\R f(z)d\nu^t(z)=\langle f(c_t)(e_0),e_0\rangle,$$
for any continuous complex valued function $f:\R\rightarrow \C$.
The measure $\nu^t$ is the distribution of the operator $c_t$ and is supported on the spectrum of $c_t$ that is $\2$.
Consider the Hilbert space of square integrable functions $L^2(\2,\nu^t)$, we have an operator:
\begin{align*}
\eta_t:L^2(\2,\nu^t)&\longrightarrow \lN \\
f\in \co & \longmapsto f(c_t)(e_0).
\end{align*}
This operator $\eta_t$ is a unitary transformation by construction.
It satisfies that
$$(\eta_t c_t\eta_t^*)(f)(z)=zf(z),$$
for any continuous function $f\in \co$ and for $\nu$-\alev\ $z\in\2$.
Hence, $c_t$ is of uniform multiplicity equal to $1$.

This implies that the spectrum $\sigma(c_t)$ is equal to the \textit{disjoint} union of its pure point spectrum $\sigma_{\text{pp}}(c_t)$, its singular spectrum $\sigma_{\text{sc}}(c_t)$, and its absolutely continuous spectrum $\sigma_{\text{ac}}(c_t)$.

Let us show that the spectrum of $c_t$ is absolutely continuous.
The theorem of Kato-Rosenblum \cite{Kato_perturbation_theory_lin_ope}[p. 540] says the following:\\
If $a$ and $b$ are self-adjoint operators acting on a \Hs\ $\h$, and $a$ is trace class, then the absolutely continuous part of $b$ and $a+b$ are unitarily equivalent.\\
The operator $k_t=c_t-(\sh)$ is of finite rank, so is trace class.
Hence, the absolutely continuous part of $c_t$ and $\sh$ are unitarily equivalent.
In particular, the absolutely continuous spectrum of $c_t$ and $\sh$ are equal.
The operator $\sh$ is semicircular, so is equal to its absolutely continuous part.
The spectrum of $\sh$ is equal to $\2$, so the absolutely continuous spectrum of $c_t$ is equal to $\2$.
This implies that $\sigma(c_t)=\sigma_{\text{ac}}(c_t)=\2$ and then $\sigma_{\text{pp}}(c_t)=\sigma_{\text{sc}}(c_t)=\emptyset$.
Hence, $c_t$ is equal to its absolutely continuous part.

Let us apply again the theorem of Kato-Rosenblum, we get that $c_t$ and $\sh$ are unitarily equivalent.
\end{proof}

\begin{prop}\label{prop_GJSW_ope_unit_equiv}
There exists a unitary $u$ acting on the \Hs\ $\lNN$ that commutes with $1\otimes (\sh)$ and conjugates $\alpha+(\sh)\otimes 1$ with $(\sh)\otimes 1.$
Hence, the bimodule $\AbA$ is isomorphic to the coarse correspondence.
\end{prop}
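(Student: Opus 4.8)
The plan is to assemble the pointwise unitary equivalences of Lemma \ref{lem_c_g_equivalent_a_sh} into a single decomposable unitary, and then transport everything back through the identifications already set up. First I would work on the Hilbert space $\lN\otimes\Ln$, to which $\lNN$ is identified by $1\otimes\eta_\nu$; under this identification $1\otimes(\sh)$ becomes the diagonal algebra (multiplication by $t$ in the second variable), the operator $\alpha+(\sh)\otimes 1$ becomes the decomposable operator $c=\int_\2^\oplus c_t\,d\nu(t)$ of Proposition \ref{prop_GJSW_c_t}, and $(\sh)\otimes 1$ becomes the constant field $\int_\2^\oplus(\sh)\,d\nu(t)$. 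Thus it suffices to produce a decomposable unitary $\int_\2^\oplus w_t\,d\nu(t)$ with $w_t c_t w_t^*=\sh$ for $\nu$-almost every $t$: such an operator automatically commutes with the diagonal algebra, and transporting it back through $1\otimes\eta_\nu$ yields a unitary $u$ on $\lNN$ that commutes with $1\otimes(\sh)$ and sends $\alpha+(\sh)\otimes 1$ to $(\sh)\otimes 1$.

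Fix $t$. Lemma \ref{lem_c_g_equivalent_a_sh} already gives that $c_t$ is unitarily equivalent to $\sh$; I would make the conjugating unitary explicit so as to control its dependence on $t$. Writing $\eta_t:L^2(\2,\nu^t)\to\lN$ and $\eta_\nu:L^2(\2,\nu)\to\lN$ for the spectral unitaries of Lemma \ref{lem_c_g_equivalent_a_sh} and Proposition \ref{prop_GJSW_semicircular_distribution_P_n}, under which $c_t$ and $\sh$ both become multiplication by $z$, the scalar spectral measures $\nu^t$ and $\nu$ are equivalent because $c_t$ and $\sh$ are unitarily equivalent operators of uniform multiplicity one. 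Hence $V_t(f)=\sqrt{d\nu^t/d\nu}\,f$ is a unitary $L^2(\2,\nu^t)\to L^2(\2,\nu)$ intertwining the two multiplication operators, and $w_t:=\eta_\nu V_t\eta_t^*$ satisfies $w_t c_t w_t^*=\sh$.

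The main obstacle is measurability: I must verify that $t\mapsto w_t$ is a measurable field of unitaries, so that $\int_\2^\oplus w_t\,d\nu(t)$ is a genuine decomposable operator. For this I would use that $t\mapsto c_t$ is norm-continuous (Proposition \ref{prop_GJSW_c_t}), whence for every $f\in\co$ the map $t\mapsto\langle f(c_t)e_0,e_0\rangle=\int f\,d\nu^t$ is continuous; this makes $t\mapsto\nu^t$ a weakly measurable field of probability measures, all equivalent to $\nu$, so the Radon--Nikodym derivatives $(t,z)\mapsto(d\nu^t/d\nu)(z)$ can be chosen jointly measurable. Since the orthogonal polynomials $S_{n,t}$ of Lemma \ref{lem_c_g_equivalent_a_sh} depend polynomially on $t$ and $w_t(e_n)=\eta_\nu\big(\sqrt{d\nu^t/d\nu}\,S_{n,t}\big)$, the field $t\mapsto w_t$ is measurable. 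Alternatively one may invoke the general principle that two measurable fields of self-adjoint operators which are fiberwise unitarily equivalent are conjugate by a measurable field of unitaries.

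Finally I would deduce the bimodule statement. Setting $\Phi:=u\,\eta_b:\AbA\to\lNN$ and combining Proposition \ref{prop_decomposition_bimodule_b} with the properties of $u$, one computes $\Phi\pi(\U)\Phi^*=u(\alpha+(\sh)\otimes 1)u^*=(\sh)\otimes 1$ and $\Phi\rho(\U)\Phi^*=u(1\otimes(\sh))u^*=1\otimes(\sh)$, the latter because $u$ commutes with $1\otimes(\sh)$. Recalling from Proposition \ref{prop_GJSW_AVA} that $\U\mapsto\sh$ realizes $A$ on $\lN$, the right-hand sides are precisely the left and right actions of $A$ on the coarse correspondence $\lN\otimes\lN$; hence $\Phi$ is an isomorphism of $A$-bimodules and $\AbA$ is isomorphic to the coarse correspondence.
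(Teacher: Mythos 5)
Your proposal is correct and follows essentially the same route as the paper: pointwise unitary equivalence of $c_t$ with $\sh$ from Lemma \ref{lem_c_g_equivalent_a_sh}, assembled into a decomposable unitary $u=\int_\2^\oplus u_t\,d\nu(t)$ that automatically commutes with the diagonal algebra, then composed with $\eta_b$ to get the bimodule isomorphism. The only difference is at the measurable-selection step, where the paper simply invokes Dixmier's lemma (the ``general principle'' you mention as an alternative) rather than constructing the field $t\mapsto w_t$ explicitly via Radon--Nikodym derivatives as you do.
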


\begin{proof}
By lemma \ref{lem_c_g_equivalent_a_sh}, for any $t\in \2$, the operator $c_t$ is unitarily equivalent to the operator $\sh$.
By \cite{Dixmier_livre_hilbert}[Lemma 2,Chap.II,\S 2] there exists a measurable operator field of unitaries $\{u_t,\,t\in\2\}$ such that $u_t c_t u_t^*=\sh$.
This defines a unitary operator
$$u=\int^\oplus _\2 u_t d\nu(t),$$
such that $u$ commutes with $1\otimes (\sh)$ because it is decomposable.
By construction $u(\alpha+(\sh)\otimes 1)u^*=(\sh)\otimes 1$.

Let us show that the bimodule $\AbA$ is isomorphic to the coarse correspondence.
Consider the unitary transformation $\eta_b:\AbA\longrightarrow \lNN$ given in proposition \ref{prop_decomposition_bimodule_b} and the unitary $u\in\Bo(\lNN)$ that we just consider.
Let $w=u\circ\eta_b$, it is a unitary transformation from the \Hs\ $\AbA$ into the \Hs\ $\lNN$.
Furthermore, by construction, it satisfies that
$$w\pi(\U)w^*=(\sh)\otimes 1$$
and
$$w\rho(\U)w^*=1\otimes(\sh).$$
Hence, by definition, $\AbA$ is isomorphic to the coarse correspondence via the unitary transformation $w$.
\end{proof}

Consider $_A\overline{\Pl_1}_A$ the subbimodule of $_AL^2(M_\Pl)_A$ generated by $\Pl_1$.

\begin{cor}\label{cor_APA}
The bimodules $_A\overline{\Pl_1}_A$ is isomorphic to a direct sum of coarse correspondences.
\end{cor}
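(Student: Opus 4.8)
The plan is to reduce the statement to the single-generator case treated in Proposition \ref{prop_GJSW_ope_unit_equiv} by splitting $\Pl_1$ along an orthonormal basis. Since $\Pl_1$ is finite dimensional, I would fix an orthonormal basis $\{b_1,\ldots,b_d\}$ of $\Pl_1$ for the inner product $\langle\cdot,\cdot\rangle$ (if $\Pl_1=\{0\}$ there is nothing to prove). Because the left and right actions of $A=\{\cup\}''$ are $\C$-linear and $\Pl_1=\mathrm{span}\{b_1,\ldots,b_d\}$, the bimodule $\APA$ generated by $\Pl_1$ is the closed span of $\sum_{i=1}^d {_A\overline{b_i}_A}$, the sum of the bimodules generated by the individual $b_i$. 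Each $b_i$ is a unit vector, so by Proposition \ref{prop_GJSW_ope_unit_equiv} every summand ${_A\overline{b_i}_A}$ is isomorphic to the coarse correspondence. It therefore remains only to prove that this sum is an orthogonal direct sum, after which $\APA=\bigoplus_{i=1}^d {_A\overline{b_i}_A}$ is manifestly a direct sum of coarse correspondences.

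For the orthogonality I would use the explicit orthonormal basis $E_{b_i}$ of ${_A\overline{b_i}_A}$ produced in Lemma \ref{lem_ortho_basis_AbA} and show that, for $i\neq j$, every vector of $E_{b_i}$ is orthogonal to every vector of $E_{b_j}$. These computations are the exact analogues of those carried out in the proof of Lemma \ref{lem_ortho_basis_AbA}, now keeping track of the scalar $\langle b_i,b_j\rangle$ in place of the normalization $\Vertvert{b}_2^2=1$. Two basis vectors lying in different graded pieces $\Pl_n$ are automatically orthogonal, so only vectors in the same $\Pl_n$ need attention. A pairing $\langle \dr b_i\bullet\cupbul{r},\dr b_j\bullet\cupbul{r}\rangle$ factors as $\langle b_i,b_j\rangle$ times a power of $\de$; a pairing of a $b_i$-vector against a vector $\cupbul{k}\bullet Z_{b_j}\bullet\cupbul{r}$ vanishes precisely as in Lemma \ref{lem_ortho_basis_AbA}, since $Z_{b_j}$ is killed by a cap on its top right, a property independent of $i$; and a pairing $\langle \cupbul{k}\bullet Z_{b_i}\bullet\cupbul{r},\cupbul{k}\bullet Z_{b_j}\bullet\cupbul{r}\rangle$ reduces to $\langle Z_{b_i},Z_{b_j}\rangle$, which expands through $Z_b=\frac{\cup\bullet b-\dmi b\bullet\cup}{\sqrt{\de-\dmi}}$ into a combination of inner products each proportional to $\langle b_i,b_j\rangle$. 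As $\langle b_i,b_j\rangle=0$ for $i\neq j$, all of these pairings vanish, whence ${_A\overline{b_i}_A}\perp{_A\overline{b_j}_A}$.

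Finally I would assemble the pieces: the orthogonal direct sum $\bigoplus_{i=1}^d {_A\overline{b_i}_A}$ is a closed sub-bimodule of $\LMP$ (a finite orthogonal sum of closed bimodules is closed), it contains each $b_i$ hence all of $\Pl_1$, and conversely each ${_A\overline{b_i}_A}$ is contained in $\APA$ because $b_i\in\Pl_1$; therefore it coincides with $\APA$. Combined with Proposition \ref{prop_GJSW_ope_unit_equiv}, this exhibits $\APA$ as an orthogonal direct sum of $d=\dim\Pl_1$ copies of the coarse correspondence. The only delicate point is the orthogonality step: one must check that the diagrammatic reductions of Lemma \ref{lem_ortho_basis_AbA} go through with $\Vertvert{b}_2^2$ replaced by $\langle b_i,b_j\rangle$, so that the vanishing of every relevant pairing stems from the orthogonality of the chosen basis of $\Pl_1$ and not from any feature special to a single generator.
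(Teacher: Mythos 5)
Your proof is correct and follows the same route as the paper: decompose $\APA$ along an orthonormal basis of $\Pl_1$ into the bimodules generated by the individual basis vectors and apply Proposition \ref{prop_GJSW_ope_unit_equiv} to each summand. The paper simply asserts the orthogonal direct sum decomposition, whereas you verify it explicitly via the bases of Lemma \ref{lem_ortho_basis_AbA}; that verification is sound and fills in a detail the paper leaves to the reader.
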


\begin{proof}
Let $\{b^i,\,i\in I\}$ be an orthonormal basis of $\Pl_1$, view as a subspace of the Hilbert space $L^2(M_\Pl)$.
The bimodule $\APA$ is isomorphic to the direct sum of bimodules:
$$\bigoplus_{i\in I}{_A\overline{b^i}_A},$$
where $_A\overline{b^i}_A$ is the subbimodule of $\LMP$ generated by the vector $b^i$.
By proposition \ref{prop_GJSW_ope_unit_equiv} we have that the bimodules ${_A\overline{b^i}_A}$ is isomorphic to the coarse correspondence.
Hence, $\APA$ is isomorphic to a direct sum of coarse correspondences.
\end{proof}

\begin{theo}\label{theo_GJSW_main_theo}
The bimodule $_AL^2(M_\Pl)_A$ is isomorphic to the direct sum of the bimodule $_A\LA_A$ and some copies of the coarse correspondence.
In particular, $\MP$ is a II$_1$ factor and $\AM_\Pl$ is a MASA.
\end{theo}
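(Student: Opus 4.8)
The plan is to read the first assertion directly off the structural results already established, and then to extract maximality and factoriality from the bimodule decomposition via the standard correspondence dictionary. First I would combine Proposition~\ref{prop_GJSW_LM_sum_V_P_A}, which gives $_AL^2(M_\Pl)_A\cong\ALAA\oplus\APA\oplus\AVA$, with Corollary~\ref{cor_APA} and Proposition~\ref{prop_GJSW_AVA}, which identify $\APA$ and $\AVA$ as direct sums of coarse correspondences. Taking the direct sum, the orthogonal complement of $\ALAA$ in $_AL^2(M_\Pl)_A$ is a direct sum of copies of the coarse correspondence, which is precisely the first sentence of the theorem.

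Before the ``in particular'' clauses I would record that $A$ is diffuse. Indeed, by Proposition~\ref{prop_GJSW_semicircular_distribution_P_n} the generator $\cup$ of $A$ corresponds to the semicircular operator $\sh$, whose distribution $\nu$ is absolutely continuous with respect to Lebesgue measure; hence $A\cong L^\infty(\2,\nu)$ has no atoms. This diffuseness is exactly what is needed to run the coarse-correspondence computations that follow.

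To see that $\AM_\Pl$ is a MASA I would identify $A'\cap\MP$ with the bounded $A$-central vectors of $_AL^2(M_\Pl)_A$ and show this space reduces to $A$. On the summand $\ALAA$ every vector is central because $A$ is abelian, which already gives $A\subseteq A'\cap\MP$. On a coarse summand $\LA\otimes\LA\cong L^2(\2\times\2)$ the left and right actions of $f\in A$ are multiplication by $f(x)$ and by $f(y)$; a central vector $\zeta$ then satisfies $f(x)\zeta=f(y)\zeta$ for every $f$, so $\zeta$ is supported on the diagonal of $\2\times\2$, which is $\nu\times\nu$-null since $\nu$ is diffuse. Thus the coarse part contributes no nonzero central vectors, the only central vectors lie in $\LA$, and $A'\cap\MP=A$.

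Finally, for factoriality I would note that $Z(\MP)=\MP\cap\MP'\subseteq A'\cap\MP=A$, so every central element $z$ lies in $A$ and acts by the same operator on the left and on the right of $_AL^2(M_\Pl)_A$. Restricting this equality to one coarse summand $L^2(\2\times\2)$ forces $z(x)=z(y)$ almost everywhere, whence $z$ is constant and $Z(\MP)=\C$. Since $\MP$ carries a faithful normal trace it is a finite factor, and since it contains the diffuse subalgebra $A$ it is infinite dimensional; a finite infinite-dimensional factor is of type II$_1$. The genuine analytic difficulty has already been absorbed into Proposition~\ref{prop_GJSW_AVA} and Corollary~\ref{cor_APA} (the perturbation-theoretic identification of $\APA$ with a coarse correspondence); the only point to watch in this final step is the appeal to diffuseness of $A$, without which the coarse part could carry central vectors and both conclusions would fail.
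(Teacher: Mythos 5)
Your argument is correct, and its first half (assembling Propositions \ref{prop_GJSW_LM_sum_V_P_A}, \ref{prop_GJSW_AVA} and Corollary \ref{cor_APA}) is exactly what the paper does. Where you diverge is in extracting the MASA property and factoriality from the coarse decomposition: you diagonalize $A$ as $L^\infty(\2,\nu)$ with $\nu$ diffuse, realize each coarse summand as $L^2(\2\times\2,\nu\otimes\nu)$ with left and right actions by multiplication in the two variables, and kill $A$-central vectors by observing they must be supported on the diagonal, a $\nu\otimes\nu$-null set; the same reasoning disposes of central elements of $\MP$. The paper's own proof of Theorem \ref{theo_GJSW_main_theo} instead transports a putative central vector $\xi$ of the coarse part to a Hilbert--Schmidt operator $\Psi(\xi)$ commuting with $\sh$, and derives a contradiction from compactness together with the absence of eigenvalues of $\sh$ (absolute continuity of its spectrum); factoriality is then obtained by a Cauchy--Schwarz equality argument showing the central element acts as a homothety. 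The two routes rest on the same underlying fact -- the distribution of $\cup$ has no atoms -- but yours is the more elementary and more general one: it works verbatim for any diffuse abelian subalgebra whose $L^2$-complement is coarse, and is in fact precisely the argument the paper records later as Proposition \ref{prop_GJSW_coarse MASA sing} in the appendix (which additionally yields singularity, the Pukanszky invariant and Takesaki simplicity). The only point worth making explicit in your write-up is the one the appendix proof does make: to conclude that a central vector is supported on the diagonal you should test against an injective bounded function $f$ on $\2$ (the identity function suffices), and your identification of $A'\cap\MP$ with the bounded $A$-central vectors of $L^2(\MP)$ uses faithfulness of the trace.
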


\begin{proof}
By proposition \ref{prop_GJSW_AVA}, the bimodule $\AVA$ is isomorphic to a direct sum of coarse correspondences and by proposition \ref{prop_GJSW_ope_unit_equiv}, the bimodule $\APA$ is isomorphic to a direct sum of coarse correspondences.
By proposition \ref{prop_GJSW_LM_sum_V_P_A}, the bimodule $_A\LMP_A$ is isomorphic to the direct sum
$_A\LA_A\oplus\APA\oplus\AVA.$
Therefore, the bimodule $_A\LMP_A$ is isomorphic to the direct sum of the bimodule $_A\LA_A$ and a direct sum of coarse correspondences.

Let us show that $\AM_\Pl$ is a MASA.
For this, it is sufficient to show that for any vector $\xi\in\LMP$ such that $\pi(\cup)\xi=\rho(\cup)\xi$ we have that $\xi\in\LA$.
Consider a vector in the orthogonal of $\LA$, $\xi\in\LMP\ominus\LA$, and suppose that $\pi(\cup)\xi=\rho(\cup)\xi$.
We have seen that the $A$-bimodule $\LMP\ominus\LA$ is isomorphic to a direct sum of coarse correspondences.
Hence, we can consider that $\xi$ is a vector of the coarse correspondence $\lNN$ such that
\begin{equation}\label{equa_GJSW_kerderivsh}
    ((\sh)\otimes 1)(\xi)=(1\otimes(\sh))(\xi).
\end{equation}

Let $HS(\lN)$ be the \Hs\ of Hilbert-Schmidt operators on the \Hs\ $\lN$ and let $\Psi:\lNN\longrightarrow HS(\lN)$ be the anti-linear isomorphism
defined such that $$\Psi(x_1\otimes x_2)(x)=\langle x,x_1\rangle x_2,$$
where $x\in\lN$ and $x_1\otimes x_2\in\lNN$.
If we apply the transformation $\Psi$ to the equation \ref{equa_GJSW_kerderivsh}, we get that the two operators $\Psi(\xi)$ and $\sh$ commute.
Suppose that $\xi$ is a non null vector, then $\Psi(\xi)$ is a non null Hilbert-Schmidt operator.
The Hilbert-Schmidt operators are compact operators. Hence $\Psi(\xi)$ acts by homothety  on a non null finite dimensional vector space.
Hence, the self-adjoint operator $\sh$ leaves invariant a non null finite dimensional vector space.
This implies that $\sh$ admits an eigenvalue, but the spectrum of $\sh$ is absolutely continuous, a contradiction.
Therefore, $\xi=0$ and so $\AM_\Pl$ is a MASA.

Let us show that the \vna\ $\MP$ is a factor.
Let $a\in\MP$ be an element in the center of $\MP$, in particular, $a$ commutes with $A$.
Hence, $a\in A$ because $\AM_\Pl$ is a MASA.
For any vector $\xi\in\LMP$, we have that $\pi(a)\xi=\rho(a)\xi$ because $a$ commutes with $\MP$.
Consider the map $\U\longmapsto \sh$, it defines a faithful representation from the \vna\ $A$ on the \Hs\ $\lN$, see proposition \ref{prop_GJSW_AVA}.
Let $b\in\Bo(\lN)$ be the image of $a$ by this representation.
The orthogonal of $\LA$ is equal to a direct sum of coarse correspondences.
Then, for any $x\otimes y\in\lNN$,
$$b(x)\otimes y=x\otimes b(y).$$
Therefore,
\begin{align*}
\Vert b(x)\otimes y\Vert=\Vert b(x)\Vert\Vert y\Vert=\Vert x\otimes b(y)\Vert=\Vert x\Vert \Vert b(y)\Vert.
\end{align*}
Hence,
$$\Vert b(x)\otimes y\Vert^2=\Vert b(x)\Vert\Vert b(y)\Vert\Vert x\Vert \Vert y\Vert.$$
On the other hand,
\begin{align*}
\Vert b(x)\otimes y\Vert^2&=\langle b(x)\otimes y,x\otimes b(y)\rangle=\langle b(x),x\rangle\langle y,b(y)\rangle\\
&\leqslant \Vert b(x)\Vert\Vert b(y)\Vert\Vert x\Vert \Vert y\Vert,
\end{align*}
by the inequality of Cauchy-Schwartz.
We notice that we are in the case of equality of Cauchy-Schwartz, hence the vectors $b(x)$ and $x$ are homothetic.
Therefore, $b$ is an homothety, hence $a$ is an homothety.
\end{proof}


\section{Appendix}


\subsection{Subfactor planar algebra associated to an unshaded planar algebra}

By mimicking \cite{JSW_orthogonal_approach_planar_algebra} and using Theorem \ref{theo_GJSW_main_theo} we construct a tower of II$_1$ factors associated to a given \unplal\ $\Pl$.

Consider the \vesp\ 
$$\includegraphics[width=8cm]{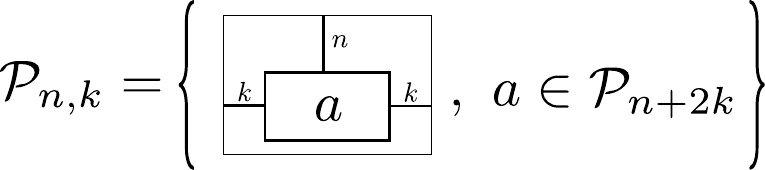},$$
where $n,k\geqslant 0$. 
As before, we assume that the stars are placed at the top left of the boxes.
We equip this vector space with the inner product of $\Pl_{n+2k}$.
Let $\Grk$ be the graded vector space equal to the direct sum of the $\Pl_{n,k}$. 
We extend the inner product of the $\Pl_{n,k}$ on $\Grk$ such that the $\Pl_{n,k}$ are pairwise orthogonal.
We denote by $\h_k$ the \Hs\ equal to the completion of $\Grk$ for the inner product.
Let $\wedge_k$ be the multiplication of $\Grk$ defined as follows:
$$\includegraphics[width=8cm]{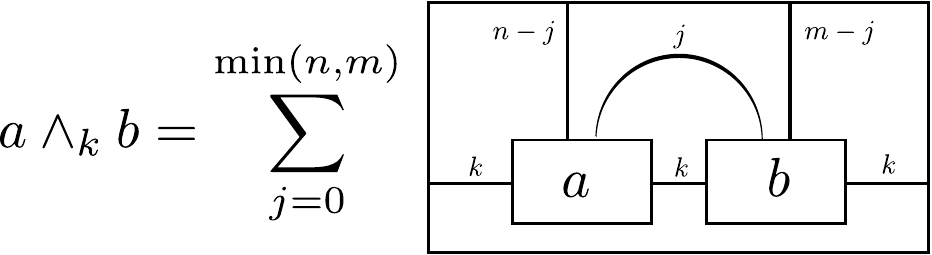},$$
where $a\in \Pl_{n,k}$ and $b\in\Pl_{m,k}$.
We equip $\Grk$ with the involution coming from the planar algebra structure.
Following \cite{JSW_orthogonal_approach_planar_algebra} and Proposition \ref{prop_GJSW_mult_cont} we have that $\wedge_k$ defined a bounded representation $\pi_k$ of the involutive algebra $(\Grk,\wedge_k,*)$ on the \Hs\ $\h_k$.
Let $M_k$ be the bicommutant of $\pi_k(\Grk)$.
The form $\tau_k:a\in M_k\longmapsto \langle a,1\rangle$ is a faithful normal trace.
Consider the GNS \Hs\ coming from this state $L^2(M_k)$.
We identify the standard representation of $M_k$ on $L^2(M_k)$ with the representation $\pi_k$ extended to $M_k$.
The algebra $M_k$ is unitaly embedded in $M_{k+1}$ via the map
$$\includegraphics[width=5cm]{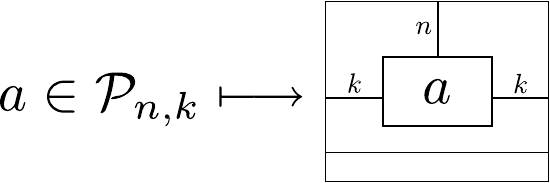}.$$
Hence, we have a tower of finite \vna s $M_0\subset M_1\subset\cdots\subset M_k\subset\cdots$.
We call it \textit{the tower associated to $\Pl$}.
The \vna\ $M_0$ corresponds to the \vna\ $\MP$ of the precedents sections.
Consider the cup subalgebra $A\subset M_k$.

\begin{theo}\label{theo_bimod_LMk}
The $A$-bimodule $L^2(M_k)$ is isomorphic to 
$$(\Pl_{0,k}\otimes L^2(A))\oplus\bigoplus_{j=0}^\infty L^2(A)\otimes L^2(A).$$
\end{theo}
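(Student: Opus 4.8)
Theorem \ref{theo_bimod_LMk} generalizes Theorem \ref{theo_GJSW_main_theo} (which is the case $k=0$) to the whole tower, so my plan is to reproduce the three-step strategy of Section \ref{section_GJSW_cup} inside each $M_k$. Let me think about what changes when I pass from $M_0=\MP$ to a general $M_k$.

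The plan is to transpose the entire analysis of Section \ref{section_GJSW_cup} from $\MP=M_0$ to the general level $M_k$ of the tower, treating Theorem \ref{theo_GJSW_main_theo} as the case $k=0$ (note that $\dim\Pl_{0,0}=\dim\Pl_0=1$, so that $\Pl_{0,0}\otimes\LA=\LA$ recovers exactly the statement proved there). First I would introduce, for each $n\geqslant 2$, the subspace $V_{n,k}\subset\Pl_{n,k}$ of elements that vanish when a cap is placed at the top left and when a cap is placed at the top right, set $V_k=\bigoplus_{n\geqslant 2}V_{n,k}$, and consider inside $\LMk$ the three $A$-subbimodules $E_1,E_2,E_3$ generated respectively by $\Pl_{0,k}$, by $\Pl_{1,k}$, and by $V_k$. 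As in Proposition \ref{prop_GJSW_LM_sum_V_P_A}, each $E_i$ is the closed span of the vectors $\cupbul r\bullet x\bullet\cupbul s$ with $x$ ranging over $\Pl_{0,k}$, $\Pl_{1,k}$, or $V_k$, and each is readily checked to be a bimodule.

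The first step is the orthogonal decomposition $\LMk=E_1\oplus E_2\oplus E_3$. Density follows by induction on $n$: the analogue of \cite{JSW_orthogonal_approach_planar_algebra}[Lemma 4.5.] shows that the orthogonal complement $W_{n,k}$ of $V_{n,k}$ inside $\Pl_{n,k}$ is spanned by elements carrying a cup on the far left or on the far right, so that $\Pl_{n,k}\subset E_1+E_2+E_3$ by applying the inductive hypothesis to $\Pl_{n-2,k}$. Pairwise orthogonality of the three pieces is obtained by the same degree-counting and capping argument as in Proposition \ref{prop_GJSW_LM_sum_V_P_A}, since a reduced box lying in $V_{n,k}$ or in $\Pl_{1,k}$ is annihilated by the caps that appear in the relevant inner products.

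The second step identifies the three pieces. The bimodule $E_1$ is isomorphic to $\Pl_{0,k}\otimes\LA$: fixing an orthonormal basis of $\Pl_{0,k}$, the span of $\{\cupbul r\bullet w\bullet\cupbul s\}$ for each basis vector $w$ is a copy of the trivial bimodule $\LA$ on which the left and right cup actions coincide, exactly as the cups $\cupbul j$ span $\LA$ when $k=0$; this is because $w$ carries no top endpoints and the $k$ through-strings are inert, so nothing breaks the left--right symmetry. Summing over the basis yields $\Pl_{0,k}\otimes\LA$. The bimodules $E_2$ and $E_3$ are direct sums of coarse correspondences: for $E_3$ this is the computation of Proposition \ref{prop_GJSW_AVA} applied to $V_k$, while for $E_2$ one reproduces the chain from Lemma \ref{lem_ortho_basis_AbA} through Proposition \ref{prop_GJSW_ope_unit_equiv}, building for each box $b$ in an orthonormal basis of $\Pl_{1,k}$ the reduced vector $Z_b$, writing the left cup action on $\AbA$ as a finite-rank perturbation $c=\int_\2^\oplus c_t\,d\nu(t)$ of $(\sh)\otimes 1$, and invoking Weyl--von Neumann and Kato--Rosenblum (Lemma \ref{lem_c_g_equivalent_a_sh}) to conjugate each $c_t$ to $\sh$. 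Because the spaces $\Pl_{n,k}$ are finite dimensional but infinite in number, the total count of coarse summands is countable, giving $\bigoplus_{j=0}^\infty\LA\otimes\LA$ and completing the decomposition.

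The main obstacle is verifying that the perturbation-theoretic argument of Lemma \ref{lem_c_g_equivalent_a_sh} survives unchanged at level $k$. One must check that, for a reduced box $b\in\Pl_{1,k}$, the $k$ through-strings act only as passive multiplicity, so that the fiber operator $c_t$ governing the left cup action on $\AbA$ is still a finite-rank perturbation of the semicircular operator $\sh$ and produces no additional off-diagonal coupling beyond the one seen in the case $k=0$. I expect this to hold because the cup meets only the top endpoints while the through-strings pass underneath untouched, but confirming that the left and right cup actions on $\AbA$ have precisely the same local form as in Proposition \ref{prop_decomposition_bimodule_b} is the step demanding genuine care.
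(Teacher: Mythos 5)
Your proposal is correct and follows essentially the same route as the paper: the paper likewise defines the level-$k$ analogue $V^{(k)}$ of $V$, decomposes $\LMk$ as the orthogonal direct sum of the $A$-bimodules generated by $\Pl_{0,k}$, $\Pl_{1,k}$ and $V^{(k)}$ by repeating Proposition \ref{prop_GJSW_LM_sum_V_P_A}, identifies $_A{\Pl_{0,k}}_A$ with $\Pl_{0,k}\otimes\LA$, and handles $\Pl_{1,k}$ via the same reduced vector $Z^k(a)$ and the perturbation argument of Proposition \ref{prop_GJSW_ope_unit_equiv}. You actually spell out more of the verification (in particular the point that the $k$ through-strings are inert for the cup action) than the paper, which simply cites ``a similar proof.''
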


\begin{proof}
We fix an integer $n\geqslant 0$.
Consider the \vesp\
$$\includegraphics[width=10cm]{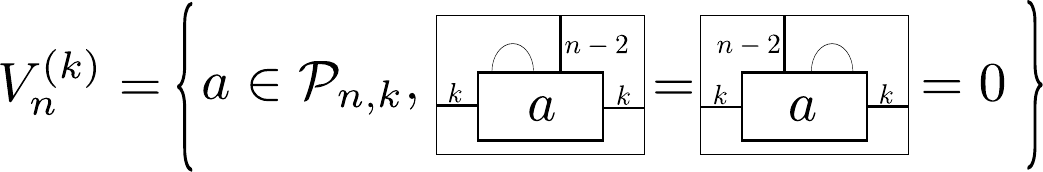}$$
Let $V^{(k)}$ be the orthogonal direct sum of the $V_n^{(k)}$.
We denote by $_A{V^{(k)}}_A$ (resp. $_A{\Pl_{1,k}}_A$, resp. $_A{\Pl_{0,k}}_A$) the $A$-bimodule generated by $V_n^{(k)}$ (resp. $\Pl_{1,k}$, resp. $\Pl_{0,k}$).
A similar proof of proposition \ref{prop_GJSW_LM_sum_V_P_A} shows that $\LMk$ is the orthogonal direct sum of those three bimodules.
Following the proof of \cite[Theorem 4.9]{JSW_orthogonal_approach_planar_algebra}, we get that the bimodule $_{A}{V^{(k)}}_{A}$ is isomorphic to an infinite direct sum of coarse correspondences.
Consider the map 
\begin{align*}
Z^k:\Pl_{1,k}&\longrightarrow {_A{\Pl_{1,k}}_A}\\
a&\longmapsto Z^k(a),
\end{align*}
where $$Z^k(a)=\frac{\cup\bullet a-\dmi a\bullet \cup}{\sqrt{\delta-\dmi}}.$$
With a similar proof of proposition \ref{prop_GJSW_ope_unit_equiv}, we have that the $A$-bimodule generated by any non null $a\in\Pl_{1,k}$ is isomorphic to the coarse correspondence.
Therefore, the $A$-bimodule generated by ${\Pl_{1,k}}$ is isomorphic to a sum of coarse correspondences.
Consider the bimodule $ _A{\Pl_{0,k}}_A$.
It is clearly isomorphic to $\Pl_{0,k}\otimes \LA$.
Therefore, the $A$-bimodule $\LMk$ is isomorphic to $(\Pl_{0,k}\otimes L^2(A))\oplus\bigoplus_{j=0}^\infty L^2(A)\otimes L^2(A).$
\end{proof}

\begin{cor}
For any $k\geqslant 0$, the \vna\ $M_k$ is a II$_1$ factor.
The \sfplalg\ of the subfactor $M_0\subset M_1$ is equal to the \unplal\ $\tilde\Pl=\{\Pl_{2n},\ n>0\}$.
\end{cor}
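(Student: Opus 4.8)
The plan is to dispose of $k=0$ first and then bootstrap. When $k=0$ the space $\Pl_{0,0}=\Pl_0$ is one dimensional, so Theorem \ref{theo_bimod_LMk} specializes to the decomposition of Theorem \ref{theo_GJSW_main_theo}; hence $M_0=\MP$ is a II$_1$ factor and nothing new is needed. For $k\geqslant 1$ I would establish factoriality from the bimodule picture and then read off the standard invariant from the tower.

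For factoriality I would mimic the factor argument of Theorem \ref{theo_GJSW_main_theo}. Let $z$ lie in the center of $M_k$. Since the cup subalgebra satisfies $A\subset M_0\subset M_k$ and $z$ commutes with $A$, the vector $\hat z\in\LMk$ is annihilated by $\pi(\cup)-\rho(\cup)$, i.e.\ it is an $A$-central vector. By Theorem \ref{theo_bimod_LMk} the orthogonal complement of the trivial part $\Pl_{0,k}\otimes\LA$ is a direct sum of coarse correspondences, and, exactly as in Theorem \ref{theo_GJSW_main_theo} (the Hilbert--Schmidt picture together with the absolute continuity of the spectrum of $\sh$), a coarse correspondence carries no nonzero $A$-central vector. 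Therefore $\hat z\in\Pl_{0,k}\otimes\LA$, so that $z\in A'\cap M_k=\{A,\Pl_{0,k}\}''$.

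The difference with $k=0$, and what I expect to be the main obstacle, is that for $k\geqslant 1$ the multiplicity space $\Pl_{0,k}\cong\Pl_{2k}$ is no longer one dimensional, so $A$ is \emph{not} a maximal abelian subalgebra of $M_k$ and the argument above only confines $z$ to $A'\cap M_k$. To kill this finite dimensional ambiguity I would use that a central $z$ also commutes with $M_0$, hence with the normalized cups and Jones-projection elements sitting inside $M_k$; the aim is to show that the only element of $A'\cap M_k$ commuting with all of $M_0$ is scalar. Equivalently, and more robustly, I would identify the chain $M_0\subset M_1\subset\cdots$ with the Jones tower of the inclusion $M_0\subset M_1$: one exhibits the Jones projections $e_k$ (suitably normalized cups) in $M_{k+1}$ and checks, using the trace $\tau_k$ and a Pimsner--Popa basis, that $M_{k+1}=\langle M_k,e_k\rangle$ is the basic construction. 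Granting that $M_0$ and $M_1$ are factors of finite index, Jones's theorem that the basic construction of a finite-index II$_1$ subfactor is again a II$_1$ factor then yields factoriality of every $M_k$ by induction, the delicate input being the factoriality of $M_1$ itself (handled by the $k=1$ case of the commutant argument just sketched).

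It remains to identify the \sfplalg\ of $M_0\subset M_1$. Once the tower is known to be the Jones tower, the standard invariant is computed from the relative commutants: I would identify $M_0'\cap M_j$ and $M_1'\cap M_j$, as filtered $*$-algebras, with the even box spaces $\Pl_{2j}$, and check that the planar operations (multiplication, inclusion, conditional expectations, and the Jones projections) match those of $\tilde\Pl=\{\Pl_{2n},\ n>0\}$, since all of them are implemented by the very tangles defining $\Grk$ and $\wedge_k$. By the uniqueness of the planar-algebra structure on the standard invariant, the \sfplalg\ of $M_0\subset M_1$ is then $\tilde\Pl$. The two potentially delicate points are verifying the basic-construction (Markov) relations for the normalized cups $e_k$ and checking that the identification $M_0'\cap M_j\cong\Pl_{2j}$ intertwines the capping and conditional-expectation maps; both are graphical verifications in the spirit of the earlier sections.
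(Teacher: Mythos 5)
Your overall route---extract $M_0'\cap M_k$ from the bimodule decomposition of Theorem \ref{theo_bimod_LMk}, identify the chain $M_0\subset M_1\subset\cdots$ with the Jones tower, and read off the standard invariant from the relative commutants---is exactly the strategy the paper invokes: its proof consists of recording that $M_0'\cap M_k=(\Pl_{2k},\times)$ and then deferring to \cite{JSW_orthogonal_approach_planar_algebra} for factoriality and the identification of the planar algebra. Your plan for the planar-algebra identification is therefore consistent with the paper's.

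The genuine gap is in the factoriality argument, at the very point you flag as delicate. The stated aim, ``the only element of $A'\cap M_k$ commuting with all of $M_0$ is scalar,'' is false: that set contains $M_0'\cap M_k\cong(\Pl_{2k},\times)$, which has dimension greater than $1$ in general (for the unshaded planar algebra of noncommutative polynomials in $l$ variables one has $\dim\Pl_2=l^2$). Relatedly, ``$z$ commutes with $M_0$, hence with the Jones-projection elements sitting inside $M_k$'' is a non sequitur, since those projections live in $M_2,\dots,M_k$ and not in $M_0$. This matters because the basic-construction induction only propagates factoriality with step two ($M_{k+1}=\langle M_k,e\rangle$ is the commutant of $JM_{k-1}J$, so it is a factor if and only if $M_{k-1}$ is); hence the factoriality of $M_1$ cannot come from Jones's theorem and must be proved directly, and your proposal reduces it to the broken commutant claim, leaving the base case of the induction unestablished. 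What is actually needed is that a central $z$, already confined to $\Pl_{2k}=M_0'\cap M_k$ by the bimodule decomposition, must in addition commute with the rest of $M_k$---in particular with the odd box spaces $\Pl_{1,k}$---and a connectedness argument (the principal graph of $\tilde\Pl$ is connected because $\dim\Pl_0=1$) then forces $z\in\C 1$. That additional step is the substance of the JSW strategy the paper points to, and it is the one ingredient missing from your argument.
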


\begin{proof}
The $A$-bimodule structure of $\LMk$ given in the last theorem tells us that the relative comutant $M_k\cap M_0'$ is equal to the algebra $(\Pl_{2k},\times)$.
Then we follow the strategy of  \cite{JSW_orthogonal_approach_planar_algebra} to show that each $M_k$ is a factor and that the planar algebra of the subfactor $M_0\subset M_1$ is equal to $\tilde\Pl$.
\end{proof}

\begin{rem}
Consider the tower $\{\tilde M_k,\ k\geqslant 0\}$ associated to $\tilde\Pl$.
This provides a countable family of subfactors: $\tilde M_k\subset M_k$ coming from the inclusion 
$$\bigoplus_n \Pl_{2n,k}\subset \bigoplus_m \Pl_{m,k}.$$
Note that $\tilde M_k$ is the fixed point algebra of the involution $a\in \Pl_{n,k}\mapsto (-1)^n a$.
Therefore, the Jones index of $\tilde M_k\subset M_k$ is equal to $2$.
\end{rem}


\subsection{Maximal abelian subalgebras}


In this appendix, we review some definitions about maximal abelian subalgebras (MASAs).
We present some invariants for those objects.
We prove a theorem that generalizes the proof of Theorem \ref{theo_GJSW_main_theo} for diffuse abelian subalgebra of a finite \vna.
Then, we discuss a conjecture of MASAs in the free group factor that is related to the cup subalgebra.
Note that we stop using the symbol $\star$ for the multiplication in a \vna.

\begin{defi}

\textit{Group normalizer:}\\
We define the group normalizer $\NMA$ that is the group of unitaries $u\in M$ such that $uAu^*=A$.
If the \vna\, generated by $\NMA$ is equal to $A$, the MASA is called singular.
If the \vna\, generated by $\NMA$ is equal to $M$, the MASA is called regular.

Suppose $M$ is a II$_1$ factor with its unique faithful trace $\tau$.
Let $\LM$ be the GNS Hilbert space associate to the trace $\tau$.

\textit{Pukanszky invariant:}\\
Consider the abelian \vna\, $\mathcal A$ equal to the bicommutant
$$\{\pi(A),\rho(A)\}''\subset\Bo(\LM),$$
where $\pi,\rho$ are the left and right action of $M$ on the \Hs\ $\LM$.
Let $P$ be the commutant of $\mathcal A$ acting on the orthogonal complement $\LM\ominus \LA$.
The algebra $P$ is a finite type I \vna.
Consider the subset of $n\in\N\cup\{\infty\}$ such that there exists a non null direct summand of type I$_n$ in $P$.
This set is the Pukanszky invariant of the MASA $\AM$, we denote it by $Puk(\AM)$.

\textit{Takesaki invariant:}\\
Let $\YDn$ be a \sps\ such that $A$ is isomorphic to the \vna\, of bounded measurable complex valued functions $L^\infty(Y,\nu)$.
Let $\pi,\rho$ be the left and right action of $M$ on the Hilbert space $\LM$, i.e. $\pi(x)\rho(y)(z)=xzy$, where $x,y,z\in M$.
Consider a measurable field of \Hs s $\{\h_t,\,t\in Y\}$ such that $\LM$ is equal to the direct integral
$$\intI \h_t d\nu(t),$$
such that $\rho(A)$ becomes the algebra of all diagonalizable operators.
Let $B\subset M$ be a separable $C^*$-subalgebra that is dense for the ultraweak topology.
Consider a measurable field of representations of $B$, $\{\pi_t,\,t\in Y\}$, such that
$$\pi\vert_B=\intI \pi_t d\nu(t),$$
where $\pi\vert_B$ denotes the restriction to $B$ of the standard representation.
Let $\mathcal R$ be the equivalence relation on $Y$ such that $(s,t)\in\mathcal R$ if and only if the representation $\pi_s$ is unitarily equivalent to the representation $\pi_t$.
Now let $\mathcal R,\mathcal R'$ be two equivalence relations on $Y$.
We define an equivalence relation "$\equiv$" such that $\mathcal R\equiv \mathcal R'$ if and only if there exists a Borel null set $N\subset Y$ such that $\mathcal R\backslash N^2=\mathcal R'\backslash N^2$.
Let $\widehat{\mathcal R}$ be the equivalence class of $\mathcal R$ for "$\equiv$".
It is the Takesaki invariant.
We say that a MASA is Takesaki simple if $\widehat{\mathcal R}$ is equal to the equivalence class of the trivial equivalence relation.
\end{defi}

Those definitions have been introduced by Dixmier \cite{Dixmier_anneaux_max_ab}, Pukanszky \cite{pukanszky_invariant} and Takesaki \cite{takesaki_invariant_masa}.

\begin{prop}\label{prop_GJSW_coarse MASA sing}
Consider a finite von Neumann algebra $M$ with a faithful trace $\tau$ and $A\subset M$ a diffuse abelian von Neumann subalgebra.
Let $\LM$ and $L^2(A)$ be the Hilbert spaces of the GNS construction associated to $\tau$.
Suppose there exists a Hilbert space $\mathcal K$ such that we have an isomorphism of bimodules:
$$_AL^2(M)_A\simeq{\ALAA}\oplus({_AL^2(A)\otimes \mathcal K\otimes L^2(A)_A}).$$
Then $M$ is a II$_1$ factor, $A$ is maximal abelian, singular, Takesaki simple, with Pukanszky invariant equal to the singleton $\{\dim \mathcal K\}$.
\end{prop}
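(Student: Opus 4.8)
The plan is to carry out everything inside a concrete model of the coarse summand. Since $A$ is diffuse abelian I identify $A\simeq\Li$ for a non-atomic standard probability space $(Y,\nu)$, so that $\LA\simeq\Ld$ and the coarse correspondence $L^2(A)\otimes\K\otimes L^2(A)$ becomes $L^2(Y\times Y,\nu\times\nu)\otimes\K$, on which the left action of $f\in A$ is multiplication by $f(s)$ in the first coordinate and the right action is multiplication by $f(u)$ in the second coordinate. The single mechanism behind four of the five assertions is the following observation: if a vector $\xi$ in the coarse summand satisfies $f(s)\,\xi=\theta(f)(u)\,\xi$ for every $f\in A$, where $\theta$ is a trace-preserving automorphism of $A$ (equivalently a measure-preserving transformation $T$ of $(Y,\nu)$, so $\theta(f)(u)=f(Tu)$), then $\xi$ must be supported on the graph $\{s=Tu\}$, which is $\nu\times\nu$-null because $\nu$ is non-atomic; hence $\xi=0$. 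When $\theta=\mathrm{id}$ this graph is the diagonal.

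Taking $\theta=\mathrm{id}$ first, I would deduce that $A$ is maximal abelian: for $x\in A'\cap M$ the vector $\hat x$ satisfies $\pi(a)\hat x=\rho(a)\hat x$, its coarse component vanishes by the mechanism, so $\hat x\in\LA$ and $x=E_A(x)\in A$. The factor property follows in the same vein: a central element $z$ lies in $A'\cap M=A$ and satisfies $\pi(z)=\rho(z)$, hence $z(s)=z(u)$ $\nu\times\nu$-a.e.\ on the coarse summand, forcing $z$ constant, so $Z(M)=\C$; as $M$ carries a faithful trace and contains the diffuse algebra $A$, it is a II$_1$ factor. For singularity, a normalising unitary $u\in\NMA$ produces $\theta=\mathrm{Ad}(u^*)|_A\in\mathrm{Aut}(A)$, and the identity $\pi(a)\hat u=\rho(\theta(a))\hat u$ together with the mechanism (now for the measure-preserving $T$ attached to $\theta$) kills the coarse component of $\hat u$; thus $\hat u\in\LA$, $u=E_A(u)\in A$, and $\NMA\subset\mathcal U(A)$ generates only $A$.

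For the Pukanszky invariant I would use that $\LM\ominus\LA$ is exactly the coarse summand $L^2(Y\times Y)\otimes\K$. On it the abelian algebra $\mathcal A=\{\pi(A),\rho(A)\}''$ is generated by the multiplications $f(s)$ and $g(u)$, so $\mathcal A=L^\infty(Y\times Y)\otimes 1_\K$. Its commutant $P$ on this space is therefore $L^\infty(Y\times Y)\,\overline{\otimes}\,\Bo(\K)$, a homogeneous type I$_{\dim\K}$ \vna; the only $n$ for which $P$ admits a type I$_n$ summand is $n=\dim\K$, whence $Puk(\AM)=\{\dim\K\}$.

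The hard part will be Takesaki simplicity. I would disintegrate $\LM=\intI\h_t\,d\nu(t)$ diagonalising the right action $\rho(A)$ over $Y\simeq\widehat A$; both summands are $\rho(A)$-invariant, so the piece $\LA$ contributes the constant field $\C$ and the coarse piece the constant field $\Ld\otimes\K$, giving $\h_t\simeq\C\oplus(\Ld\otimes\K)$. Restricting the fibre representation to $A$ yields $\pi_t|_A\simeq\mathrm{ev}_t\oplus(\lambda\otimes 1_\K)$, where $\mathrm{ev}_t$ is the character at $t$ coming from $\LA$ and $\lambda$ is the multiplication representation of $A$ on $\Ld$ coming from the coarse part. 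Choosing the separable ultraweakly dense $B\subset M$ to contain a self-adjoint generator $a$ of $A$ (the invariant is independent of this choice), the spectral measure of $\pi_t|_{C^*(a)}$ has exactly one atom, located at $a(t)$, since $\lambda\otimes 1_\K$ is \emph{diffuse} because $\nu$ is non-atomic. As $a$ may be taken essentially injective on $Y$, a unitary equivalence $\pi_s\simeq\pi_t$ forces $a(s)=a(t)$, i.e.\ $s=t$ \alev; hence the Takesaki relation $\mathcal R$ is the diagonal, the trivial \eqre, and $\AM$ is Takesaki simple. I expect the genuine obstacle to be precisely this step: correctly identifying the fibres of the disintegration and recognising that the coarse correspondence contributes an \emph{atomless} part to $\pi_t|_A$. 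This atomlessness is exactly what distinguishes the coarse (singular) situation from a regular Cartan MASA, whose diagonal bimodules would each contribute an extra atom and thereby generate a nontrivial orbit relation instead of the diagonal.
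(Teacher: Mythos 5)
Your proposal is correct, and for four of the five conclusions (maximal abelianness, factoriality, the Pukanszky invariant, and Takesaki simplicity) it follows essentially the same route as the paper: identify $A\simeq\Li$ with $\nu$ non-atomic, realize the coarse summand as $L^2(Y^2,\K,\nu\otimes\nu)$, kill commuting vectors by observing they must live on the null diagonal, read off $Puk(\AM)=\{\dim\K\}$ from the constant field $\K_{s,t}=\K$, and prove Takesaki simplicity by disintegrating over $\rho(A)$ and noting that the fibre representation of an injective $f\in A$ is $f(t)\oplus(\text{multiplication on }\Ld\otimes\K)$, whose unique eigenvalue $f(t)$ pins down $t$. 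The one genuine divergence is singularity: the paper obtains it as a corollary of Takesaki simplicity by citing Takesaki's Theorem 4.1 (and remarks that Popa's criterion would also apply when $\dim\K>1$), whereas you prove it directly by running your ``graph mechanism'' with $\theta=\mathrm{Ad}(u^*)|_A$ for a normalizing unitary $u$, using that the graph of the associated measure-preserving transformation $T$ is $\nu\otimes\nu$-null to force the coarse component of $\hat u$ to vanish. Your version is more self-contained and covers all $\dim\K\geqslant 1$ without external input; the paper's version gets singularity for free once simplicity is established. (Both your argument and the paper's implicitly assume $\K\neq\{0\}$, without which $M=A$ is not a factor; this is a defect of the statement, not of your proof.)
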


\begin{proof}
Consider $\AM$ and $\mathcal K$ as in the hypothesis of the proposition.
Let us show that $A\subset M$ is maximal abelian.
Let $\YDn$ be a \sps\ such that $A$ is isomorphic to the \vna\ $L^\infty(Y,\nu)$. 
We have supposed that $A$ is diffuse, hence $\nu$ is non-atomic.
We denote by $\psi:A\longrightarrow L^\infty(Y,\nu)$ an isomorphisms of \vna s.
It induces an isomorphisms of Hilbert spaces $\overline{\Psi}:L^2(A)\longrightarrow L^2(Y,\nu)$.
Consider the Hilbert space $L^2(Y^2,\mathcal K,\nu\otimes \nu)$ of Borel measurable functions $f:Y^2\longrightarrow \mathcal K$ that are square integrable for the product measure $\nu\otimes \nu$, i.e.
$$\int_{Y^2}\Vert f(s,t)\Vert_{\mathcal K}^2 d(\nu\otimes\nu)(s,t)<\infty.$$
We have a isomorphisms of Hilbert spaces defined as follows:
\begin{align*}
\Ld\otimes \mathcal K\otimes \Ld & \longrightarrow L^2(Y^2,\mathcal K,\nu\otimes \nu)\\
g_1\otimes \xi\otimes g_2&\longmapsto ((s,t)\in Y^2\mapsto g_1(s)g_2(t)\xi).
\end{align*}
The Hilbert space $L^2(Y^2,\mathcal K,\nu\otimes \nu)$ as a structure of bimodule over the \vna\, $\Li$ that is:
$$(f_1.h.f_2)(s,t)=f_1(s)f_2(t)h(s,t),$$
for any $f_1,f_2\in\Li$ and any $h\in L^2(Y^2,\mathcal K,\nu\otimes \nu)$.
Consider the Hilbert space $\Ld$, it has a structure of bimodule that is:
$$(f_1.h.f_2)(t)=f_1(t)f_2(t)h(t),$$
for any $f_1,f_2\in\Li$ and any $h\in L^2(Y,\nu)$.
We suppose those Hilbert spaces equipped with the bimodule structures that we just described.
We have an isomorphism of bimodules:
\begin{equation}\label{LM_bimodule_decompo}
\varphi:{_AL^2(M)_A}\longrightarrow L^2(Y,\nu)\oplus L^2(Y^2,\mathcal K,\nu\otimes \nu),
\end{equation}
where $A$ is identified with $\Li$ and $L^2(A)$ with $\Ld$.\\
Let $\pi$ be the standard representation of $M$ on $L^2(M)$ and $\rho$ the standard representation of the opposite algebra $M^{op}$ on $L^2(M)$.
If we identify the two bimodules $_AL^2(M)_A$ and
$$L^2(Y,\nu)\oplus L^2(Y^2,\mathcal K,\nu\otimes \nu)$$
we have that
$$\pi(f)(h\oplus k)(s,t)=(f(t)h(t))\oplus (f(s)h(s,t))$$
and
$$\rho(f)(h\oplus k)(s,t)=(f(t)h(t))\oplus(f(t)h(s,t)),$$
where $f\in \Li$, $h\in \Ld$ and $k\in L^2(Y^2,\mathcal K,\nu\otimes \nu)$.

Let us show that $\AM$ is maximal abelian:

Consider an element $x\in M$ that commutes with $A$ such that its conditional expectation onto $A$ is equal to zero,
i.e. $E_A(x)=0$.
Let us identify the \vna\ $M$ and its image in the \Hs\ $\LM$.
The vector $\varphi(x)$ is orthogonal to $\Ld$ because $E_A(x)=0$, so there exists a vector $g\in L^2(Y^2,\mathcal K,\nu\otimes \nu)$ such that $\varphi(x)=g$.
Let $f\in\Ld$ be an injective function, we have that $\pi(f)(g)=\rho(f)(g)$ because $x$ commutes with $A$.
Hence, $f(s)g(s,t)=f(t)g(s,t)$ almost everywhere for the product measure $\nu\otimes \nu$.

This implies that $g$ is supported on the diagonal $\Delta Y=\{(t,t),\,t\in Y\}$.
The measure $\nu$ is non-atomic; thus, $(\nu\otimes\nu)(\Delta Y)=0$.
Therefore, $g=0$, hence $x=0$.
It means that $A'\cap M$ is included in $A$, so $\AM$ is maximal abelian.

Let us show that $M$ is a factor.
Consider a central element $x\in M\cap M'$, we have that $x\in M\cap A'=A$.
The element $x$ commutes with $M$; thus, for any vector $\eta\in\LM$,
$\pi(x)(\eta)=\rho(x)(\eta)$.
We denote the identity of the algebra $M$ by $1$.
Let $1\otimes \xi\otimes 1\in \LA\otimes \mathcal K\otimes \LA$, we have that $\pi(x)(1\otimes \xi\otimes 1)=x\otimes \xi\otimes 1$ and $\rho(x)(1\otimes \xi\otimes 1)=1\otimes \xi\otimes x$.
By identification we get that $x\in\C 1$, so $M$ is a factor.

The equality \ref{LM_bimodule_decompo} shows that the bimodule $\LM\ominus\LA$ is isomorphic to $L^2(Y^2,\mathcal K,\nu\otimes \nu)$.
Thus, it is the direct integral of measurable fields of Hilbert spaces $\{\mathcal K_{s,t},\,(s,t)\in Y^2\}$ over the probability space $(Y^2,\nu\otimes\nu)$, where for any $(s,t)\in Y^2$, $\mathcal K_{s,t}=\mathcal K$.
The Pukanszky invariant is, by definition, the essential value of the dimension function $d(s,t)=\dim\mathcal K_{s,t}$.
In our case, it is clearly equal to the singleton $\{\dim\mathcal K\}$.

Let us prove that $\AM$ is Takesaki simple \cite{takesaki_invariant_masa}.

Let $B$ be a separable, ultraweakly dense, $C^*$-subalgebra of $M$.
Consider the abelian $C^*$-algebra of continuous complex valued function $\mathcal C(Y)$, view as a subalgebra of $A$.
We suppose that $\mathcal C(Y)\subset B$.
We begin by diagonalizing the abelian \vna\, $\rho(A)$.
Let $\h_0$ be the Hilbert space equal to the orthogonal direct sum $\C\oplus L^2(Y,\mathcal K,\nu)$,
where $L^2(Y,\mathcal K,\nu)$ is the Hilbert space of measurable functions $g:Y\longrightarrow \mathcal K$ such that
$$\int_Y \Vert g(s)\Vert_{\mathcal K}^2 d\nu(s)<\infty.$$
Consider the tensor product of Hilbert spaces $\h:=\h_0\otimes \Ld$.
We have an isomorphism
$$\phi:\h_0\otimes \Ld\longrightarrow \Ld\oplus L^2(Y^2,\mathcal K,\nu\otimes \nu)$$
such that
$$\phi((z\oplus f)\otimes g)(s,t)=(g(t)z)\oplus (f(s)g(t)),$$
where $z\in \C$, $f\in L^2(Y,\mathcal K,\nu)$, $g\in \Ld$.
The isomorphism $\phi$ conjugates the right action of $A$, $\rho(A)$, and the diagonal algebra.
If $\xi\otimes g\in \h_0\otimes \Ld$ and $f\in\Li$, then
$$(\phi^*\rho(f)\phi)(\xi\otimes g)(t)=\xi\otimes f(t)g(t).$$
Let $\pi\vert_B$ be the restriction to $B$ of the standard representation.
We have that $\pi(B)$ commutes with the diagonal algebra $\rho(A)$, hence there exists a measurable field $\{\pi_t,\,t\in Y\}$ of representations unique almost everywhere such that
$$\pi\vert_B=\int^\oplus_Y \pi_td\nu(t).$$
We want to prove that $\AM$ is Takesaki simple.
We need to show that there exists a Borel null set $N\subset Y$ such that for any $s,t\in Y\backslash N$, $\pi_s$ is unitarily equivalent to
$\pi_t$ if and only if $s=t$.
Here, we denote the set $\{t\in Y,\,t\notin N\}$ by $Y\backslash N$.

Consider an \textit{injective} continuous function $f\in \mathcal C(Y)$.
Fix a $t_0\in Y$, and consider the operator $f_{t_0}$ acting on the Hilbert space $\h_0$ as follows:
$$f_{t_0}(z\oplus g)(s)=(f(t_0)z)\oplus (f(s)g(s)),$$
for any $z\in \C$, $g\in L^2(Y,\mathcal K,\nu)$ and $s\in Y$.
The collection of operators $\{f_t,\,t\in Y\}$ is a bounded measurable operator field.
Consider the decomposable operator
$$D_f:=\int^\oplus_Y f_td\nu(t)$$
acting on $\h_0\otimes \Ld$.
We clearly have that $D_f=\phi\pi(f)\phi^*$.

Let us show that $f_{t_1}$ is unitarily conjugate to $f_{t_2}$ if and only if $t_1=t_2$.
To do this, we prove that the operator $f_t$ has a unique eigenvalue equal to $f(t)$.
Consider the vector
$$z\oplus 0\in \C\oplus L^2(Y,\mathcal K,\nu),$$
where $z$ is a complex number different from zero.
We have that $f_t(z\oplus 0)=f(t)z\oplus 0$, so $f(t)$ is an eigenvalue of the operator $f_t$.
Let us show this is the only eigenvalue of $f_t$, to do this it is sufficient to show that the restriction of $f_t$ to the Hilbert space $L^2(Y,\mathcal K,\nu)$ does not have any eigenvalue.
Let $z\in \C$ and $h\in L^2(Y,\mathcal K,\nu)$ such that $f_t(h)=zh.$
We have that $f(s)h(s)=zh(s)$ almost everywhere.
The function $f$ is injective, so $f^{-1}(\{z\})$ is empty or is a singleton.
The measure $\nu$ is non-atomic; thus, $\nu(f^{-1}(\{z\}))=0$.
Therefore, $h(s)=0$ almost everywhere, hence $h=0$.
We have proved that the set of eigenvalues of the operator $f_t$ is equal to the singleton $\{f(t)\}$.
The set of eigenvalues of an operator is invariant under unitary conjugacy.
Hence by injectivity of the function $f$, we get that $f_{t_1}$ and $f_{t_2}$ are unitarily equivalent if and only if $t_1=t_2$.

By uniqueness of the decomposition of an operator, there exists a Borel null set $N\subset Y$ such that $\pi_t(f)=f_t$ for any $t\in Y\backslash N$.
Let $s,t\in Y\backslash N$, and suppose that the two representation $\pi_s$ and $\pi_t$ are unitarily equivalent.
In particular, $\pi_s(f)$ and $\pi_t(f)$ are unitarily equivalent; thus, $f_t$ and $f_s$ are unitarily equivalent.
This implies that $s=t$. We have proved that $\AM$ is Takesaki simple.

A simple MASA is singular by \cite{takesaki_invariant_masa}[Theorem 4.1].
\end{proof}

\begin{rem}
By a result of Popa \cite{Popa_Notes_Cartan}, if $\AM$ is a MASA such that $1$ is not in the Pukanszki invariant we have that $\AM$ is singular.
Hence, in the case where $\dim\mathcal K>1$ we have a simpler proof of this proposition.
\end{rem}

\begin{rem}
Consider the radial and the generator MASAs in the free group factor $L(\mathbb F_2)$ with to generators $a,b$.
It is know that those two MASAs are singular, with Pukanzsky invariant equal to the singleton $\{\infty\}$.
Using proposition \ref{prop_GJSW_coarse MASA sing}, one can show that the cup subalgebra has the same invariants.

If we take the subfactor planar algebra of non commutative polynomials of two variables with monomials of even degree, then the cup subalgebra is isomorphic to the generator MASA.
Furthermore, let us take the unshaded planar algebra $\Pl$ of all non commutative polynomials of two variables.
The \vna\, $\MP$ is isomorphic to the free group factor with two generators $L(\mathbb F_2)$.
The cup subalgebra has a very similar form as the radial MASA.
One question is if this cup subalgebra is isomorphic to the radial MASA?

Let $f$ be a real valued measurable bounded function on $\C$. 
Consider the self-adjoint operator $h_f:=f(a)+f(b)\in L(\mathbb F_2)$ obtained by functional calculus.
In which case $h_f$ generates a MASA of $A_f\subset L(\mathbb F_2)$ and which condition on $f$ could assure that the MASA $A_f\subset L(\mathbb F_2)$ is isomorphic to the radial MASA?
Note that the cup subalgebra is of this form when $\Pl$ is the unshaded planar algebra of non commutative polynomials.
\end{rem}

\textit{Acknowledgements:}

I would like to thank my advisor Vaughan Jones who proposed me this problem at the very beginning of my PhD.
I thank Mike Hartglass for helping me to write in English.


\bibliographystyle{plain}

\begin{thebibliography}{99}

\bibitem{Dixmier_anneaux_max_ab}
\textsc{J. Dixmier},
Sous-anneaux abéliens maximaux dans les facteurs de type fini. Ann. Math., 59, (1954).

\bibitem{Dixmier_livre_hilbert}
\textsc{J. Dixmier}
Les algèbres d'opérateurs dans l'espace hilbertien, Paris, (1969).

\bibitem{Dykema_LF_t}
\textsc{K.J. Dykema}
Interpolated free group factors, Pacific J. Math., 163, (1994), 123-135.

\bibitem{GJS_random_matrices_free_proba_planar_algebra_and_subfactor}
\textsc{A. Guionnet, V.F.R. Jones and D. Shlyakhtenko}
Random matrices, free probability, planar algebras and subfactors, in E. Blanchard, D. Ellwood, M. Marcolli, H. Moscovici, S. Popa (EDs.), A Quanta of Maths: Non-commutative Geometry Conference in Honor of Alain Connes, in: Clay Math. Proc., 11, (2011), 201-240.

\bibitem{GJS_semifinite_algebra}
\textsc{A. Guionnet, V.F.R. Jones and D. Shlyakhtenko}
A semi-finite algebra associated to a planar algebra, J. Funct. Anal., 261, (2011), 1345-1360.

\bibitem{Jones_index_for_subfactors}
\textsc{V.F.R. Jones}
Index for subfactors, Invent. Math., 72, (1983), 1-25.

\bibitem{jones_planar_algebra}
\textsc{V.F.R. Jones}
Planar algebras I, ArXiv:9909027, (1999).

\bibitem{JSW_orthogonal_approach_planar_algebra}
\textsc{V.F.R. Jones, D. Shlyakhtenko and K. Walker}
An orthogonal approach to the subfactor of a planar algebra, Pacific J. Math., 246, (2010), 187-197.

\bibitem{Kato_perturbation_theory_lin_ope}
\textsc{T. Kato}
Perturbation theory for linear operators, New York, (1966).


\bibitem{sunder_constructionGJS}
\textsc{V. Kodiyalam and V.S. Sunder}
From subfactor planar algebras to subfactors, Internat. J. Math., 20, 10, (2009), 1207-1231.

\bibitem{Sunder_surlaconstructionGJSII}
\textsc{V. Kodiyalam and V.S. Sunder}
On the Guionnet-Jones-Shlyakhtenko construction for graphs, J. Funct. Anal., 260, (2011), 2635-2673.

\bibitem{Voiculescu_dykema_nica_Free_random_variables}
\textsc{A. Nica, K.J. Dykema and D.V. Voiculescu}
Free fandom variables, CRM, (1992).

\bibitem{peters_planar_haagerup_graph}
\textsc{E. Peters}
A planar algebra construction of the Haagerup subfactor, Internat. J. Math., 21, 8, (2010), 987-1045.

\bibitem{Popa_Notes_Cartan}
\textsc{S. Popa}
Notes on Cartan subalgebras in type II$_1$ factors, Math. Scand., 57, 1, (1985), 171-188.

\bibitem{popa_system_construction_subfactor}
\textsc{S. Popa}
An axiomatization of the lattice of higher relative commutants of a subfactor, Invent. Math., 120, 3, (1995), 427-445.

\bibitem{Popa_Shlyakhtenko_univ_prop_LF_subfactor}
\textsc{S. Popa and D. Shlyakhtenko}
Universal properties of $L(\mathbb F_\infty)$ in subfactor theory, Acta. Math., 191, (2003), 225-257.

\bibitem{pukanszky_invariant}
\textsc{L. Pukanszky}
On maximal abelian subrings of factor of type II$_1$, Can. J. Math., 12, (1960), 289-296.

\bibitem{Radulescu_Random_matrices_LF_t}
\textsc{F. Radulescu}
Random matrices, amalgameted free products and subfactors of the von Neumann algebra of a free group, of noninteger index, Invent. Math., 115, (1994), 347-389.

\bibitem{takesaki_invariant_masa}
\textsc{M. Takesaki}
On the unitary equivalence among the components of decompositions of representations of involutive Banach algebras and the associated diagonal algebras, Tohoku Math. J., II, Ser., 15, (1963), 365-393.

\bibitem{Voiculescu_sym_free_prod_C_alg_foncteur}
\textsc{D.V. Voiculescu}
Symmetries of some reduced free product $C^*$-algebras, in lecture notes in mathematics, Operator algebras and their connections with topology and ergodic theory, 1132, (1985), 556-588.




\end{thebibliography}

\end{document}